\date{}
\newtheorem{thm}{Theorem}[section]
\newtheorem{lem}[thm]{Lemma}
\newtheorem{prop}[thm]{Proposition}
\newtheorem{cor}[thm]{Corollary}
\theoremstyle{definition}
\newtheorem{remark}[thm]{Remark}
\theoremstyle{definition}
\newtheorem{notation}[thm]{Notation}
\theoremstyle{definition}
\newtheorem{defn}[thm]{Definition}
\theoremstyle{definition}
\newtheorem{ex}[thm]{Example}
\newcommand\smvee{\raise0.0ex\hbox{$\scriptscriptstyle\vee$}}
\title{Minimal Length Maximal Green Sequences and Triangulations of Polygons}
\author{E.Cormier} 
\address{Department of Mathematics, Bowdoin College, Brunswick, ME 04011, USA}
\email{ecormier@bowdoin.edu}
 \thanks{This research was carried out at the University of Connecticut 2015 math REU funded by NSF under DMS-1262929.  The fourth author was also supported by the NSF CAREER grant DMS-1254567.}
\author{ P. Dillery}
\address{Department of Mathematics, University of Virginia, Charlottesville, VA 22904-4137, USA}
\email{ped7pc@virginia.edu}
\author{J. Resh} 
\address{Department of Mathematics, Roger Williams University, Bristol, RI 02809, USA}
\email{jresh123@g.rwu.edu}
\author{K. Serhiyenko}
\address{Department of Mathematics, University of Connecticut, Storrs, CT 06269-3009, USA}
\email{khrystyna.nechyporenko@uconn.edu}
\author{ J. Whelan}
\address{Department of Mathematics, Vassar College, Poughkeepsie, NY 12604, USA}
\email{jowhelan@vassar.edu}
\begin{document}
\maketitle

\begin{abstract}
We use combinatorics of quivers and the corresponding surfaces to study maximal green sequences of minimal length for quivers of type $\mathbb{A}$. We prove that such sequences have length $n+t$, where $n$ is the number of vertices and $t$ is the number of 3-cycles in the quiver.  Moreover, we develop a procedure that yields these minimal length maximal green sequences.    
\end{abstract}

 \section {Introduction}
 The study of maximal green sequences is motivated by string theory, in particular Donaldson-Thomas invariants and the BPS spectrum, see \cite{ACCERV, MR}. The term maximal green sequence (MGS) was first introduced by Keller in \cite{K}, as a tool to compute the refined DT-invariants and quantum dilogarithm identities.  The definition of MGS's is purely combinatorial.  It involves local transformations of quivers, also known as directed graphs.   Moreover, these sequences are closely related to the study of Fomin-Zelevinsky's cluster algebras and various topics in the representation theory of algebras, see \cite{BY} for an overview.  
 
 A significant body of research on MGS's has been devoted to determining both the existence of MGS's for particular classes of quivers, as well as investigating the number of MGS's pertaining to a given quiver.  Notable results in this line of inquiry appear in \cite{BHIT} and \cite{BDP}, where the authors show that a quiver mutation-equivalent to an acyclic tame type quiver admits only finitely many maximal green sequences.  The same statement holds for acyclic quivers with at most three vertices.  

In this paper, we are interested in studying quivers arising from triangulations of surfaces, see \cite{FST} for details.  For almost all surfaces, in \cite{ACCERV} the authors construct specific triangulations and show that MGS's exist for quivers obtained from these triangulations.  Some of the excluded cases were explored further in \cite{B, BM, CLS}.  It is interesting to note that the existence of MGS's is not preserved under mutation as shown in \cite{M}.  

It follows from \cite{FZ2} that triangulations of unpunctured disks yield quivers that are mutation-equivalent to type $\mathbb{A}$ Dynkin diagrams.  Moreover, maximal green sequences always exist for such quivers.  Further developments for type $\mathbb{A}$ quivers were done by Garver and Musiker in \cite{GM}.  Using a combinatorial approach, the authors describe a procedure that produces an MGS for such quivers. However, in general this procedure does not yield a sequence of minimal length.  Recently, it was shown in \cite{GMc} and \cite{Ka}, using representation theoretic techniques, that for a given quiver the lengths of MGS's form a connected interval.  This property was first conjectured in \cite{BDP}.   

It is known that for acyclic quivers the length of a shortest MGS coincides with the number of vertices in the quiver, see \cite{BDP}.  However, this does not hold for quivers with oriented cycles.  Moreover, there is no known formula for the length of a shortest MGS for such quivers.  In this paper we find the minimal length of MGS's for quivers arising from triangulations of unpunctured disks.  We use both the combinatorics of quivers and surfaces to establish the following result.
  
\begin{thm}
The minimal length of a maximal green sequence for a quiver of type $\mathbb{A}$ is $n+t$, where $n$ is the number of vertices in the quiver and $t$ is the number of 3-cycles.
\end{thm}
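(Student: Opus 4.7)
Following \cite{FZ2}, any type $\mathbb{A}_n$ quiver $Q$ arises as the quiver $Q(T)$ of a triangulation $T$ of an unpunctured polygon. The $n$ vertices of $Q(T)$ correspond to the internal diagonals of $T$, and the $3$-cycles of $Q(T)$ are in bijection with the \emph{internal triangles} of $T$, namely those whose three sides are all diagonals. Under this dictionary, quiver mutation at $i$ is a flip of the diagonal labeled $i$, so a maximal green sequence becomes a flip sequence starting at $T$ and ending at a target triangulation singled out by the sign conditions on the $c$-vectors. Translating the problem into this surface setting lets one argue directly with flips and internal triangles.

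\textbf{Upper bound.} My plan is to build an MGS of length exactly $n+t$ by induction on $n$. If $T$ contains an \emph{ear}, i.e.\ a triangle with two boundary edges, then its unique diagonal $d$ is a source of $Q(T)$ and hence green. I would mutate $d$ first, delete it, and apply induction to the triangulation of the smaller polygon, which has $n-1$ diagonals and the same value of $t$. If $T$ has no ear then it must contain an internal triangle $\Delta$; here I would splice in a short explicit local sequence — mutating one distinguished diagonal of $\Delta$ twice and the other two once each — that reduces to a triangulation of smaller complexity. Concatenating all of these local sequences gives $n$ mutations coming from the $n$ diagonals and one extra mutation per internal triangle, for a total of $n+t$; verifying that at each step the chosen vertex is currently green amounts to a direct check on the sign of a single $c$-vector.

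\textbf{Lower bound.} The main obstacle is establishing that every MGS has length at least $n+t$. The baseline $n$ is standard: in any MGS every vertex must be mutated at least once, since its framed vertex must flip from green to red. The content lies in extracting one additional mutation per internal triangle. I would prove a local lemma stating that whenever $Q$ contains a $3$-cycle on vertices $\{a,b,c\}$, any MGS performs at least four mutations in total among $a,b,c$, not just three. The subtle point — and the crux of the whole argument — is that two internal triangles can share a diagonal, so one cannot simply add these local excesses. I would address this by a charge-assignment argument: to each internal triangle $\Delta$ I would assign, injectively, a specific mutation in the MGS that occurs as the ``extra'' flip witnessed by $\Delta$, for example the second occurrence of a particular diagonal of $\Delta$. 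An induction on $t$ that removes internal triangles one at a time, tracking how the assignments descend to the smaller quiver, then yields the desired bound of $n+t$. Verifying that the assignments remain injective even when two internal triangles share a diagonal is where I expect the bulk of the technical work to concentrate.
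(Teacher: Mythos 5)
Your high-level architecture (an explicit construction for the upper bound; the bound $n$ plus one ``extra'' mutation per internal triangle, with injectivity as the crux, for the lower bound) matches the paper's, but both halves have genuine gaps. In the upper bound, the claim that an ear's diagonal is a source of $Q(T)$ is false: the diagonal of an ear is also a side of an adjacent triangle and inherits arrows from it (in a fan triangulation one ear diagonal is a source but the other is a sink; if the ear is glued to an internal triangle its diagonal has both an incoming and an outgoing arrow). Moreover your fallback case ``if $T$ has no ear'' is vacuous, since every polygon triangulation has at least two ears, so the induction as stated never reaches the internal-triangle case, and the ``short explicit local sequence'' that handles an internal triangle greenly is precisely where the work lies --- the paper needs an elaborate region-based procedure (leaders/followers, mutating from the outermost 3-cycles inward and back out, Theorem \ref{5.9}) plus a gluing lemma (Lemma \ref{2.14}) to make the concatenation legal; a purely local 4-mutation splice is not justified.

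In the lower bound, your local lemma (at least four mutations among the vertices of any 3-cycle) is true but you give no mechanism for proving it. The paper's proof goes through Proposition \ref{3.3}: a maximal green sequence must terminate at the specific triangulation $\tau T$, and one then argues (Lemma \ref{03}) that for \emph{any} inscribed closed polygon $\mathfrak{p}$ of arcs, the first flipped arc of $\partial\mathfrak{p}$ lands on an arc $i'$ with $\tau^{-1}i'$ crossing another arc of $T$, so it cannot be in final position and must be flipped again. Without identifying the target triangulation, the ``four mutations'' claim has no proof. Your injectivity step is also problematic as proposed: an MGS of $Q$ does not restrict to an MGS of the subquiver obtained by deleting an internal triangle, so ``tracking how the assignments descend to the smaller quiver'' under an induction on $t$ does not make sense as stated. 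The paper avoids this by working with a single fixed MGS of $Q$ throughout and applying the polygon lemma to a nested family of inscribed polygons --- starting from a maximal one, locating a doubly-flipped boundary arc, excising the triangle containing it, and reapplying the lemma to the resulting smaller polygon(s); the generality of the lemma for arbitrary inscribed polygons (not just triangles) is essential there and is absent from your plan.
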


Furthermore, in Theorem \ref{6.4} we describe an explicit procedure that yields MGS's of minimal length for quivers discussed above.  It relies on combining the procedures for acyclic quivers and for quivers consisting entirely of connected 3-cycles, see Proposition \ref{4.5} and Theorem \ref{5.9} respectively.   

The paper is organized as follows. In Section 2, we review the definition of a maximal green sequence and state the relevant results.  Then in the following section, we discuss the construction of quivers arising from triangulations of polygons. In Section 4, we describe a procedure that produces minimal length MGS's for acyclic quivers, and in Section 5, we develop a procedure that produces MGS's for quivers composed entirely of joined 3-cycles.  Combining these results in Section 6, we obtain a method that yields an MGS for arbitrary quivers of type $\mathbb{A}$.  In the last section, using properties of the corresponding triangulations, we prove the formula for the length of the shortest MGS's and show that the given procedure results in MGS's of desired length. 


\section{Maximal Green Sequences} 

We begin by establishing some terminology.  

\begin{defn} 
A \textit{quiver} $Q=(Q_0,Q_1)$  is a directed graph where $Q_0$ is a set of vertices and $Q_1$ is a set of arrows. 
\end{defn}

Throughout this paper we always assume that each quiver $Q$ is finite, meaning both $Q_0$ and $Q_1$ are finite sets.   For simplicity, we label the vertices of $Q$ consecutively 1 through $n$.  A vertex $i\in Q_0$ is called a \emph{source} if there are no arrows ending at $i$, and it is called a \emph{sink} if there are no arrows starting at $i$.  

We only consider quivers without loops ($\xymatrix{\bullet \ar@(ru,rd)}$ \text{    }\text{        }\text{  }\text{    }) and oriented 2-cycles ($\xymatrix{\bullet \ar@/^/[r] &\bullet \ar@/^/[l]}$). The following construction due to Fomin and Zelevinsky describes an operation on such quivers known as mutation, see \cite{FZ}.   

\begin{defn} 
Given a vertex $i \in Q_0$, the \textit{mutation} of a quiver $Q$ at $i$, denoted $\mu_i(Q)$, is a new quiver formed by applying the following steps to $Q$.

1. For any pair of arrows $h \rightarrow i \rightarrow j$ in $Q$, add a new arrow $h \rightarrow j$.

2. Reverse all arrows incident to $i$.

3. Remove a maximal collection of oriented 2-cycles.

\end{defn}

Observe that this is a local transformation of a quiver that preserves the number of vertices.  In addition, the mutation procedure always yields a quiver without loops and oriented 2-cycles.   Thus, the resulting quiver can be mutated further.




\begin{ex} 
Consider the quiver $Q$ below composed of three vertices and three arrows arranged in a cyclic orientation.  On the right we see the resulting quiver $\mu_1 (Q)$.  Observe that in step 1 of the mutation procedure we add a new arrow from vertex 3 to vertex 2. However, this creates an oriented 2-cycle, which is removed in step 3.  

$$\xymatrix@C=15pt @R=15pt{& 3\ar[dl]&&&&&&& 3 \\ 1 \ar[rr]&& 2\ar[ul]&&\ar[r]^{\mu_1}&&& 1\ar[ur] &&2 \ar[ll]}$$
 \end{ex}

\begin{defn} 
Given a quiver $Q$, construct the corresponding \emph{framed quiver} $\hat{Q}=(\hat{Q}_0, \hat{Q}_1)$ and the \emph{coframed quiver} $\overset{\smvee}{Q}=(\overset{\smvee}{Q}_0, \overset{\smvee}{Q}_1)$ by adding a set of \textit{frozen vertices} $Q_0'=\{1',2',\dots,n' \}$  such that 

$$\hat{Q}_0=\overset{\smvee}{Q}_0=Q_0 \cup Q_0' $$

and a set of arrows such that 

$$ \hat{Q}_1 = Q_1 \cup \{i \rightarrow i'\mid i \in Q_0\}$$
$$ \overset{\smvee}{Q}_1 = Q_1 \cup \{i \leftarrow i'\mid i \in Q_0 \}.$$

\end{defn}

\begin{ex} \label{1}

Let $Q$ be the quiver $1\longrightarrow 2$.  Below we show the corresponding framed quiver and the coframed quiver.

$$\xymatrix@C=25pt @R=15pt{&1'& 2' &&&&&1'\ar[d]&2'\ar[d] \\ \hat{Q}: &1\ar[r]\ar[u]&2\ar[u]&&&&\overset{\smvee}{Q}:&1\ar[r]&2}$$

\end{ex}

It can be seen that a quiver mutation is an involution, meaning $\mu_i (\mu_i (Q)) =Q$ for any vertex $i$.  This gives rise to the following equivalence relation.  We say $Q'$ is \emph{mutation equivalent} to $\hat{Q}$, denoted by $Q' \sim \hat{Q}$, if there exists a finite sequence of mutations at the non-frozen vertices $\mu_{i_j}\dots\mu_{i_2} \mu_{i_1}$ transforming $\hat{Q}$ into $Q'$. In other words $\mu_{i_j}\dots\mu_{i_2} \mu_{i_1}(\hat{Q})=Q'$.  The equivalence class of $\hat{Q}$ is called its \emph{mutation class} and denoted by Mut$(\hat{Q})$.  Note that this definition also makes sense if we replace the framed quiver $\hat{Q}$ by an ordinary quiver $Q$ and allow mutations at all vertices.

Moreover, we will only consider quivers up to isomorphisms.  In the case of quivers with frozen vertices this means the following.  Two quivers $Q'$ and $Q''$ with the same set of frozen vertices are \emph{isomorphic} if there exists a quiver isomorphism $\phi: Q' \to Q''$ that fixes the frozen vertices.

\begin{defn} 
A non-frozen vertex $i$ is called \textit{green} if there are no arrows from frozen vertices into $i$. A non-frozen vertex $j$ is called \textit{red} if there are no arrows from $j$ into frozen vertices. 
\end{defn}

Observe, that by construction the non-frozen vertices of $\hat{Q}$ are green, and the non-frozen vertices of $\overset{\smvee}{Q}$ are red for any quiver $Q$.  It is interesting to see how the color of the vertices is affected by mutations.  Consider the following results.

\begin{thm}\cite{BDP}\label{1.1}
Given any $Q' \in \textup{Mut}(\hat{Q})$, every non-frozen vertex of $Q'$ is either red or green. 
\end{thm}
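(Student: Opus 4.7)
The plan is to prove this by induction on the length $\ell$ of a mutation sequence transforming $\hat{Q}$ into $Q'$. The base case $\ell = 0$ is immediate: in $\hat{Q}$ each non-frozen vertex $i$ has only the arrow $i \to i'$ to the frozen part, so every non-frozen vertex is green.

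For the inductive step, suppose $Q' \in \textup{Mut}(\hat{Q})$ has the property and let $Q'' = \mu_k(Q')$. Vertex $k$ is easy: step 2 of Definition 2.2 reverses all arrows incident to $k$, so $k$ switches color (green to red, or vice versa). For a non-frozen vertex $j \neq k$, the arrows between $j$ and the frozen part can only change via step 1, which adds $h' \to j$ for each path $h' \to k \to j$ in $Q'$ and $j \to h'$ for each path $j \to k \to h'$, followed by the 2-cycle reduction of step 3. By the inductive hypothesis $k$ is monochromatic in $Q'$, so step 1 introduces arrows at $j$ in only one direction relative to the frozen part: if $k$ is green in $Q'$, there are no frozen arrows $h' \to k$, hence no new arrows $h' \to j$; dually if $k$ is red.

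The main obstacle is that these newly introduced arrows at $j$ may point opposite to arrows that $j$ already had to (or from) the frozen part, and one must show that step 3 \emph{precisely} cancels all such opposite arrows, leaving $j$ monochromatic in $Q''$. This cancellation is not forced by naive combinatorics and is in fact equivalent to the sign-coherence of $c$-vectors. To resolve it I would appeal to representation theory: following Derksen, Weyman, and Zelevinsky, attach a generic potential to the quiver and work with decorated representations of the associated Jacobian algebra, whose mutation rule realizes the quiver mutation combinatorially. The $c$-vector recording the arrows between $j$ and the frozen part can then be identified, up to a single overall sign, with the dimension vector of an indecomposable representation produced by iterated mutation, and sign-coherence follows from the non-negativity of dimension vectors. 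Alternatives would be scattering diagrams after Gross--Hacking--Keel--Kontsevich, or a 2-Calabi--Yau categorification in which $c$-vectors appear as dimension vectors of rigid indecomposable modules.
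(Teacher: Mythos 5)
The paper does not prove this statement; it is quoted verbatim from \cite{BDP}, so there is no internal argument to compare against. Your outline is nonetheless an accurate account of how the result is actually established in the literature. The elementary parts are correct: the base case, the fact that mutation at $k$ merely reverses the frozen arrows at $k$, and the observation that when $k$ is monochromatic, step 1 of the mutation rule can only create frozen arrows at a neighbour $j$ pointing in one direction (no new $h'\to j$ if $k$ is green, no new $j\to h'$ if $k$ is red). You are also right to flag that the remaining step --- that the 2-cycle cancellation in step 3 leaves $j$ monochromatic rather than mixed --- is precisely the sign-coherence of $c$-vectors and cannot be extracted from the mutation combinatorics alone; this is where \cite{BDP} itself defers to the representation-theoretic proof via quivers with potentials (Derksen--Weyman--Zelevinsky) in the skew-symmetric case, with the scattering-diagram argument of Gross--Hacking--Keel--Kontsevich covering the general case. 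So your proposal is not a self-contained proof, but neither is the paper's treatment, and the reduction you give is the standard one. One minor remark: as stated, the theorem would be vacuously satisfied by a vertex with no arrows to or from the frozen part (such a vertex is both red and green under the given definitions); sign-coherence together with the nonvanishing of $c$-vectors rules this out, so your stronger claim that $k$ ``switches colour'' is also justified, but it is slightly more than the statement demands.
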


\begin{prop} \cite{BDP} \label{2.8}
Let $Q' \in \textup{Mut}(\hat{Q})$.\\
\indent (1) If all the non-frozen vertices of $Q'$ are green, then $Q' \cong \hat{Q}$.\\
\indent (2) If all the non-frozen vertices of $Q'$ are red, then $Q' \cong \overset{\smvee}{Q}$.
\end{prop}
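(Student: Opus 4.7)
The plan is to reformulate the statement in terms of \emph{c-vectors}. For $Q'\in\textup{Mut}(\hat{Q})$, let $c_i(Q')\in\mathbb{Z}^n$ denote the vector whose $j$-th coordinate is the number of arrows $j'\to i$ in $Q'$ minus the number of arrows $i\to j'$. Theorem \ref{1.1} is exactly the sign-coherence statement that every $c_i(Q')$ has entries of a single sign, and with the conventions in place $i$ is green iff $c_i(Q')\le 0$ and red iff $c_i(Q')\ge 0$. Since $C(\hat{Q})=-I$ and $C(\overset{\smvee}{Q})=I$, both parts amount to showing that the $n\times n$ c-matrix $C(Q')$ is forced to be $\pm I$ up to a permutation of rows.

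For part (1), I would proceed in three steps. First, establish by induction on the length of a mutation sequence from $\hat{Q}$ to $Q'$ that the c-vectors always form a $\mathbb{Z}$-basis of $\mathbb{Z}^n$; this is built into the mutation rule for extended exchange matrices, which is an involutive $\mathbb{Z}$-linear change of basis on the c-rows. Second, invoke the deeper structural input that each c-vector is, up to sign, a real Schur root---equivalently, the dimension vector of an indecomposable rigid module over the Jacobian algebra of a quiver with potential associated to $\hat{Q}$---so that $\{c_1(Q'),\ldots,c_n(Q')\}$ is a particularly constrained collection at any single seed. Third, combine these: a non-positive $\mathbb{Z}$-basis of $\mathbb{Z}^n$ built from negatives of compatible Schur roots must be a permutation of $-e_1,\ldots,-e_n$.

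Once $C(Q')=-P_\pi$ for some permutation $\pi$, the map $\phi\colon Q'\to\hat{Q}$ fixing frozen vertices and sending non-frozen $i$ to $\pi(i)$ matches the arrows between frozen and non-frozen vertices. That $\phi$ also respects arrows among non-frozen vertices follows from tropical duality: the principal part of the exchange matrix is determined by $C(Q')$ together with the corresponding g-matrix, and the g-matrix at an all-green seed is likewise a signed permutation. Part (2) is the symmetric statement and follows by applying the same argument to $\overset{\smvee}{Q}$, or by the reversal of arrows interchanging the color conventions.

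The main obstacle is the second step above: sign-coherence by itself does not pin down a non-positive $\mathbb{Z}$-basis as $-I$ up to permutation---for instance, $(-1,0)$ and $(-1,-1)$ form such a basis with determinant $1$---so one genuinely needs the representation-theoretic rigidity of c-vectors. In \cite{BDP} this input is packaged via a careful analysis of the exchange graph; alternatively it can be drawn from the Derksen--Weyman--Zelevinsky framework of decorated representations, or from the scattering-diagram description of Gross--Hacking--Keller--Kontsevich.
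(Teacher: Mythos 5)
First, a point of comparison: the paper does not prove this proposition at all --- it is imported verbatim from \cite{BDP} --- so there is no internal argument to measure your sketch against, and your attempt has to stand on its own. Your reduction is correct as far as it goes: with the paper's conventions, $i$ is green iff $c_i\le 0$ and red iff $c_i\ge 0$, Theorem \ref{1.1} is sign-coherence, $C(\hat{Q})=-I$, $C(\check{Q})=I$, and the c-matrix is always unimodular. You are also right, and right to emphasize, that sign-coherence plus unimodularity does not finish the job.

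The genuine gap is that the input you invoke to close that hole does not close it. Suppose you know that each $-c_i$ is the dimension vector of an indecomposable rigid module (a real Schur root) and that the collection is ``compatible'' in the sense natural to rigid objects, namely Ext-orthogonality (the summands of a tilting or cluster-tilting object). This still does not force the collection to be the unit vectors: already for $Q=1\rightarrow 2$, the modules $S_1$ and $P_1$ are indecomposable, rigid, and Ext-orthogonal in both directions ($S_1\oplus P_1$ is a tilting module), yet their dimension vectors $(1,0)$ and $(1,1)$ form a non-negative $\mathbb{Z}$-basis that is not a permutation of $e_1,e_2$ --- exactly your own counterexample, now realized by a genuinely compatible pair of Schur roots. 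So under the natural reading your Step 3 is false, and under the reading ``compatible'' $=$ ``arising as the c-vectors of one seed'' it is circular. The property that actually works is Hom-orthogonality rather than Ext-orthogonality: the signed c-vectors of a single seed are the classes of a simple-minded collection of bricks $\{X_1,\dots,X_n\}$ sitting in degrees $0$ and $-1$ of the derived category of the relevant Jacobian algebra (see the bijections surveyed in \cite{BY}), with $i$ green exactly when $X_i$ is a shifted module. If every vertex is green, the whole collection lies in a single degree, and a generating, pairwise Hom-orthogonal family of $n$ bricks inside one module category is precisely the set of simples; hence each $-c_i$ is a unit vector. (Note $\operatorname{Hom}(P_1,S_1)\neq 0$ in the example above, so Hom-orthogonality correctly excludes it where Ext-orthogonality does not.) With that substitution your outline goes through; the concluding appeal to tropical duality to match the principal parts of the quivers is also only a gesture, but it is the standard and repairable way to finish.
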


This gives rise to the definition of a maximal green sequence.

\begin{defn} 
A \textit{green sequence} for a quiver $Q$ is a finite sequence of mutations $\mu_{i_j}\dots \mu_{i_2}\mu_{i_1}$ of $\hat{Q}$ such that each consecutive mutation is performed in a green vertex.  This means, that for all $h = 1, \dots , j$, the vertex $i_h$ is green in the quiver $\mu_{i_{h-1}}\dots\mu_{i_2}\mu_{i_1}(\hat{Q})$.   A \textit{maximal green sequence} (MGS) for $Q$ is a green sequence that transforms $\hat{Q}$ into a quiver where every non-frozen vertex is red.  The \emph{length} of an MGS is the number of mutations appearing in the sequence.  
\end{defn}

Observe that by Proposition \ref{2.8} a maximal green sequence takes $\hat{Q}$ to a quiver isomorphic to $\overset{\smvee}{Q}$.  Moreover, we can say more in the situation when $Q$ is an acyclic quiver.  

\begin{defn}
An \emph{admissible sequence of sources} in $Q$ is a total ordering $(i_1, \dots , i_n)$ of all vertices in $Q$ such that:  
\begin{enumerate}[(i)]
\item $i_1$ is a source in $Q$, and 
\item $i_j$ is a source in $\mu_{i_{j-1}}\mu_{i_{j-2}}\cdots \,\mu_{i_1} (Q)$. 
\end{enumerate}
\end{defn}

It is well known that for acyclic quivers there exists an admissible sequence of sources.  In this case we have the following result.  

\begin{lem}\cite{BDP}\label{2.9}
Let $Q$ be an acyclic quiver.  Then any admissible sequence of sources $(i_1, \dots, i_n)$ in $Q$ yields a maximal green sequence $\mu_{i_n}\cdots \mu_{i_1}$ for $Q$.  
\end{lem}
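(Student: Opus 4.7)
The plan is to prove the statement by induction on $k$, tracking both the non-frozen part and the frozen arrows of the quiver $Q_k := \mu_{i_k}\cdots\mu_{i_1}(\hat{Q})$ at each stage. Specifically, I would prove the following stronger inductive claim: for each $0 \leq k \leq n$,
\begin{enumerate}[(i)]
\item the full subquiver of $Q_k$ on the non-frozen vertices is $\mu_{i_k}\cdots\mu_{i_1}(Q)$, and
\item the frozen arrows of $Q_k$ are exactly $i_j' \to i_j$ for $1 \leq j \leq k$ together with $m \to m'$ for $m \in Q_0 \setminus \{i_1,\dots,i_k\}$, and there are no other arrows between frozen and non-frozen vertices.
\end{enumerate}
Granting this, condition (ii) says that every non-frozen vertex in $\{i_{k+1},\dots,i_n\}$ is green in $Q_k$ while every vertex in $\{i_1,\dots,i_k\}$ is red. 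Combined with (i) and the definition of an admissible sequence of sources (which gives $i_{k+1}$ as a source in the non-frozen part of $Q_k$), it follows that $i_{k+1}$ is a green source, so the next mutation is a legitimate step in a green sequence. Taking $k = n$, every non-frozen vertex is red, so by Proposition \ref{2.8} the sequence $\mu_{i_n}\cdots\mu_{i_1}$ is an MGS.

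The base case $k=0$ is immediate from the definition of $\hat Q$. For the inductive step, the central observation is that mutation at a green source is especially simple. Suppose (i) and (ii) hold for $k-1$. When we apply the three-step mutation procedure at $i_k$ in $Q_{k-1}$:
\begin{enumerate}[(1)]
\item Step 1 adds arrows $h\to j$ for every pair $h\to i_k \to j$. But $i_k$ being a source in the non-frozen part excludes any non-frozen $h\to i_k$, and $i_k$ being green (by the inductive form of (ii)) excludes any frozen $h'\to i_k$. Hence no arrows are added.
\item Step 2 reverses the arrows incident to $i_k$: each non-frozen arrow $i_k\to j$ becomes $j\to i_k$, and the single frozen arrow $i_k \to i_k'$ becomes $i_k'\to i_k$.
\item Step 3 removes no arrows, since no 2-cycles are produced.
\end{enumerate}
The non-frozen update is precisely the mutation of the non-frozen part at $i_k$, giving (i). The only change to the frozen arrows is the flipping of $i_k\to i_k'$, giving (ii).

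The main thing to verify carefully is that steps 1 and 3 of the mutation really are trivial at every stage, which is exactly where both clauses of being a \emph{green source} are used simultaneously: the source condition kills incoming non-frozen arrows and the green condition kills incoming frozen arrows. Once this is pinned down, nothing can create new arrows between frozen and non-frozen vertices, so already-reversed frozen arrows $i_j'\to i_j$ stay reversed and not-yet-touched frozen arrows $m\to m'$ stay in place, which is precisely what (ii) needs at the next step. No further obstacle arises, and the induction delivers the result.
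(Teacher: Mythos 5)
The paper does not prove this lemma at all; it is quoted from \cite{BDP} without proof, so there is no in-paper argument to compare against. Your proof is correct and self-contained: the strengthened induction hypothesis tracking the frozen arrows is exactly the right bookkeeping, and the key observation that mutation at a green source adds no arrows in step 1 (no incoming non-frozen arrows because it is a source, no incoming frozen arrows because it is green) and creates no 2-cycles in step 3 is precisely what makes the induction close. This matches the standard argument in the cited reference.
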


\begin{ex}  
The quiver $Q$ defined in Example \ref{1} has an MGS $\mu_2 \mu_1$.  Observe that each mutation is a source mutation.  
$$\xymatrix@C=25pt @R=15pt{1'& 2' &&&1'\ar[d]&2'&&&1'\ar[d]&2'\ar[d] \\ 1\ar[r]\ar[u]&2\ar[u]&\ar[r]^{\mu_1}&& 1&2\ar[l] \ar[u]&\ar[r]^{\mu_2}&&1\ar[r]&2}$$
\end{ex}

It is important to note that for most quivers that admit a maximal green sequence, such a sequence is not unique.  Moreover, maximal green sequences vary in length, and in general there are many maximal green sequences of the same length for a given quiver.  

\begin{remark} \label{2.10}
Although red vertices can become green by mutating at the adjacent green vertices, if follows from \cite{BDP} that a green vertex can only become red if we mutate at that vertex itself. Thus for a quiver $Q$ with $n$ vertices, any MGS for $Q$ will always consist of at least $n$ mutations.  
\end{remark}

Next, consider the following lemma, which will be used in the proof of the main theorem.  We note that unlike the rest of the results, it works for a generic quiver $Q$.  

\begin{lem}\label{2.14}
Let $Q' \in \textup{Mut} (\hat{Q})$ for some quiver $Q$, such that the following conditions hold. 

\begin{enumerate}[(i)]
\item $\mathcal{C}$ is a full subquiver of $Q$ and $\hat{\mathcal{C}}$ is a full subquiver of $Q'$.  
\item $Q'$ is composed of two full subquivers $\hat{\mathcal{C}}$ and $\mathcal{D}$ connected by a single arrow $i\rightarrow j$, where $i\in\mathcal{C}_0$ and $j\in\mathcal{D}_0$.   
\end{enumerate} 

\noindent Let $\mu_{\mathcal{C}}$ be a maximal green sequence for $\mathcal{C}$, then 

\begin{enumerate}[1.]
\item all vertices of $\mathcal{C}$ are red in $\mu_{\mathcal{C}} (Q')$;
 \item $\mu_{\mathcal{C}}(Q')$ is composed of two full subquivers $\mu_{\mathcal{C}} (\hat{\mathcal{C}})$ and $\mathcal{D}$ connected by a single arrow $j\rightarrow x$,  where $x$ is a unique vertex of $\mathcal{C}_0$ with an arrow $i' \rightarrow x$ starting at the frozen vertex $i'$.  
\end{enumerate} 

$$ \begin{tikzpicture}[scale = .7][
> = stealth, 
            shorten > = 4pt, ]
            
\node[draw=none, fill=none] at (7.7,0.4) {$\hat{\mathcal{C}}$};
\node[draw, shape=circle, fill=black, scale=0.5] at (10.5,0) { };
\node[draw=none, fill=none] at (10.5,-0.5) {$i$};
\node[draw, shape=circle, fill=black, scale=0.5] at (12.5,0) { };
\node[draw=none, fill=none] at (12.5,-0.5) {$j$};
\node[draw=none, fill=none] at (15.7,0.4) {$\mathcal{D}$};

\draw [black] plot [smooth cycle] coordinates {(4.5,0) (6.5,2) (10.5,2) (10.5,0) (8.5,-2)};
\draw [black] plot [smooth cycle] coordinates {(12.5,0) (14.5,2) (18.5,2) (18.5,0) (16.5,-2)};

\draw [->] (10.6,0) -- node { } (12.35,0);

\end{tikzpicture}$$ 

\end{lem}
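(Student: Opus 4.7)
The plan is to show that the extra vertex $j$ plays precisely the role of the frozen vertex $i'$ throughout the sequence $\mu_{\mathcal{C}}$, while the subquiver $\mathcal{D}$ stays untouched. The central observation is an inductive structural statement: for every prefix $\mu_{i_h}\cdots\mu_{i_1}$ of $\mu_{\mathcal{C}}$, the quiver $\mu_{i_h}\cdots\mu_{i_1}(Q')$ decomposes into a full subquiver isomorphic to $\mu_{i_h}\cdots\mu_{i_1}(\hat{\mathcal{C}})$ and the unchanged full subquiver $\mathcal{D}$, joined only by arrows between $j$ and vertices of $\mathcal{C}_0$; moreover, these cross-arrows are in bijection with the arrows between the frozen vertex $i'$ and $\mathcal{C}_0$ in $\mu_{i_h}\cdots\mu_{i_1}(\hat{\mathcal{C}})$.

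The base case $h=0$ is condition (ii), combined with the fact that in $\hat{\mathcal{C}}$ the only arrow incident to $i'$ is $i\to i'$. For the inductive step I would verify three points. First, because only vertices of $\mathcal{C}_0$ are mutated and (by induction) the only vertex of $\mathcal{D}$ adjacent to any such vertex is possibly $j$, no arrow internal to $\mathcal{D}$ can be created or destroyed and no new $\mathcal{C}$--$\mathcal{D}$ arrow avoiding $j$ can appear. Second, greenness of $i_h$ is determined solely by arrows from frozen vertices of $\mathcal{C}$ into $i_h$, which by the inductive hypothesis coincide with those in $\mu_{i_{h-1}}\cdots\mu_{i_1}(\hat{\mathcal{C}})$; hence $\mu_{\mathcal{C}}$ being a valid green sequence for $\mathcal{C}$ implies each mutation is legal in $Q'$. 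Third, since $j$ and $i'$ are both never mutated and start in identical local configurations relative to $\mathcal{C}$, the Fomin--Zelevinsky mutation rule applied at $i_h$ modifies the $j$-arrows and the $i'$-arrows in lockstep, preserving the stated correspondence.

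With the induction completed for $h=m$ (the full length of $\mu_{\mathcal{C}}$), conclusion (1) is immediate: the color of each vertex of $\mathcal{C}_0$ depends only on its arrows with frozen vertices of $\hat{\mathcal{C}}$, so since $\mu_{\mathcal{C}}(\hat{\mathcal{C}})\cong\overset{\smvee}{\mathcal{C}}$ by Proposition \ref{2.8}, every vertex of $\mathcal{C}_0$ is red in $\mu_{\mathcal{C}}(Q')$. For conclusion (2), fix an isomorphism $\phi:\mu_{\mathcal{C}}(\hat{\mathcal{C}})\to\overset{\smvee}{\mathcal{C}}$ that fixes the frozen vertices. In $\overset{\smvee}{\mathcal{C}}$ the vertex $i'$ has the single arrow $i'\to i$, so in $\mu_{\mathcal{C}}(\hat{\mathcal{C}})$ it has the single arrow $i'\to x$ with $x=\phi^{-1}(i)$. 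Transporting this information through the induction to the $j$-arrows in $\mu_{\mathcal{C}}(Q')$ yields exactly one cross-arrow, namely $j\to x$.

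The main technical obstacle is the uniform treatment of $j$ and $i'$ in the inductive step: one must check explicitly that whenever $i_h$ has common neighbors in $\mathcal{C}_0$ with $j$ (respectively with $i'$), the composite arrows and $2$-cycle cancellations produced by mutation at $i_h$ are the same in $Q'$ as in $\hat{\mathcal{C}}$. This is where the single-arrow attachment of condition (ii), the fact that $j\in\mathcal{D}_0$ is never mutated, and the fact that arrows to and from other frozen vertices $k'$ never interact with $i'$ are all essential.
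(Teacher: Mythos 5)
Your proof is correct and follows essentially the same strategy as the paper's: you show that mutations at vertices of $\mathcal{C}$ cannot create arrows between $\hat{\mathcal{C}}$ and $\mathcal{D}$ other than through $j$, that the colors of vertices in $\mathcal{C}_0$ are therefore governed entirely by $\hat{\mathcal{C}}$ so the green sequence remains valid, and that $j$ and the frozen vertex $i'$ start in identical configurations and are never mutated, hence evolve in lockstep, which pins down the single arrow $j\to x$. Your version merely packages the paper's argument as an explicit induction on prefixes of $\mu_{\mathcal{C}}$, which is a presentational rather than a substantive difference.
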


\begin{proof}
Proof of part 1. We claim that the presence of vertex $j$ connected to $\hat{\mathcal{C}}$ cannot add any new arrows between vertices of $\hat{\mathcal{C}}$ strictly through mutations in the vertices of $\mathcal{C}$. Observe, that in order to create an arrow between vertices $x$ and $y$ in $\hat{\mathcal{C}}$ it is necessary to have an arrow $x \rightarrow z$ and an arrow $z \rightarrow y$ before a mutation at some vertex $z$.  Since only vertices of $\mathcal{C}$ are being mutated, the only arrows that can be created, due to the presence of the vertex $j$, are those between $j$ and vertices of $\hat{\mathcal{C}}$. 

Next we show that the existence of $j$ does not interfere with the green mutation sequence $\mu_{\mathcal{C}}$.  Given a red vertex $x$ in $\mathcal{C},$ there must be a frozen vertex $y'$ in $\hat{\mathcal{C}}$ with an arrow $y'\rightarrow x$.  In order for $x$ to become green, there must be a mutation at a vertex $z$ in $\mathcal{C}$ such that there exist arrows $x\rightarrow z$ and $z\rightarrow y'$ prior to $\mu_z$.  Since the presence of $j$ cannot add any new arrows between vertices in $\hat{\mathcal{C}}$ during the mutation sequence $\mu_{\mathcal{C}}$, the existence of $j$ cannot cause any vertices in $\mathcal{C}$ to become green.  This implies that $\mu_{\mathcal{C}}$ makes all vertices in $\mathcal{C}$ red, as desired. 

Proof of part 2.  By assumption there exists a vertex $i$ in $\mathcal{C}$ with $i \rightarrow j$ for some $j\in\mathcal{D}_0$.  Note that the frozen vertex $i'$ and the vertex $j$ are in identical configurations relative to the remaining vertices of $\hat{\mathcal{C}}$ in $Q'$.  In particular,  $i'$ and $j$ are each connected to a unique vertex $i$ in $\hat{\mathcal{C}}$ by a single arrow starting at $i$.  Since, neither $i'$ nor $j$ will be mutated during $\mu_{\mathcal{C}}$ they will behave in the exact same way.  This means that throughout the mutation sequence $\mu_{\mathcal{C}}$, for any vertex $z \in \mathcal{C}$ there exists $i'\rightarrow z$ if and only if there exists $j\rightarrow z$, and similarly there exists $z\rightarrow i'$ if and only if there exists $z\rightarrow j$.  Therefore, since in $\mu_{\mathcal{C}}(Q')$ all vertices in $\mathcal{C}$ are red by part 1, the only arrows connecting $j$ and vertices in $\hat{\mathcal{C}}$ will be coming from $j$ into $\hat{\mathcal{C}}$.   Moreover, in the proof of part 1 we showed that the presence of vertex $j$ does not add any new arrows between vertices of $\hat{\mathcal{C}}$  during the mutation sequence $\mu_{\mathcal{C}}$.  This implies that in $\mu_{\mathcal{C}}(Q')$ there is one arrow starting at $j$ and ending at a vertex $x\in \mathcal{C}_0$, where $x$ is a unique vertex with an arrow $i' \rightarrow x$ from the frozen vertex $i'$.  

Now it suffices to show that in $Q'$ given a vertex $y$ in $\mathcal{D}$ connected to $j$ by an arrow, there are no arrows between $y$ and $\hat{\mathcal{C}}$ in the resulting quiver $\mu_{\mathcal{C}}(Q')$.  In order for such arrow to exist there must be a mutation at some vertex $z$ connected to both $y$ and some vertex in $\hat{\mathcal{C}}$.  However, throughout the mutation sequence $\mu_{\mathcal{C}}$ we can see that $y$ will not be connected to any vertex of $\mathcal{C}$, because in $Q'$ it is not connected to any such vertex.  Therefore, no vertices of $\mathcal{D}$ besides $j$ are connected to $\hat{\mathcal{C}}$ in $\mu_{\mathcal{C}}(Q')$.  Since the mutation sequence $\mu_{\mathcal{C}}$ only involves mutations at vertices of $\mathcal{C}$, the subquiver $\mathcal{D}$ is unaffected by $\mu_{\mathcal{C}}$.  Hence, $\mathcal{D}$ remains a full subquiver of $\mu_{\mathcal{C}}(Q')$.
\end{proof}

\begin{remark}\label{2.15}
The above lemma can be generalized, replacing a single component $\mathcal{D}$ by a set of components $\{\mathcal{D}_k\}$ each connected to a given $\hat{\mathcal{C}}$ by an arrow $i_k \rightarrow j_k$, where $i_k \in \mathcal{C}$, $j_k\in\mathcal{D}_k$, and all $i_k$ are distinct.  Observe that in this case it suffices to show that there are no arrows between different $\mathcal{D}_k$ throughout the mutation sequence $\mu_{\mathcal{C}}$.  This holds because in particular, there are no arrows between different $j_k$, as each of them behaves analogously to the corresponding frozen vertex ${i'_k}$, and  it is well-known that there are no arrows between frozen vertices for any quiver in $\text{Mut}(\hat{Q})$.  
\end{remark} 

\section{Quivers Arising from Triangulations of Polygons}

A particularly nice class of quivers are those arising from triangulations of surfaces.  Given a triangulation $T$ of a marked oriented Riemann surface $(S,M)$ with boundary,  in \cite{FST} the authors describe a construction that produces the corresponding quiver $Q_T$.  Moreover, mutations of $Q_T$ have a concrete geometric realization.  This enables one to study mutations of quivers coming from surfaces in this alternative setting.  

In this paper, we study the case when $(S,M)$ is a disk with $m$ marked points on the boundary of $S$. Observe that this surface is homotopic to a polygon with $m$ edges, and in the literature these surfaces are used interchangeably.  From now on $(S,M)$ will always denote such a surface oriented counterclockwise. For brevity, we only describe the correspondence between quivers and triangulations of regular polygons.  See \cite{FST} for the general situation.   

\begin{defn} 
Given a regular polygon $(S,M)$ an {\emph{arc}} $\gamma$ in $S$ is a diagonal of the polygon.  
\end{defn}

\begin{defn} 
A \textit{triangulation} $T$ of $(S,M)$  is a maximal collection of arcs that do not intersect in the interior of $S$. 
\end{defn}

\begin{remark} \label{3.0}
Any given triangulation of an $(n+3)$-gon is composed of $n$ arcs, which in turn divide the surface into $n+1$ distinct triangles.  
\end{remark}

Following \cite{FST} we describe a method to obtain a quiver from a given triangulation.  

\begin{defn} Given a triangulation $T$ of a polygon $S$, form the corresponding quiver $Q_T$ by applying the following steps. 

\begin{enumerate}[1.]

\item Label the arcs consecutively $1$ through $n$. The arcs correspond to vertices $1$ through $n$ in the quiver $Q_T$.

\item For any pair of arcs $i$ and $j$ belonging to the same triangle $\Delta$ in $T$, add an arrow from $i$ to $j$ in the quiver if $j$ follows $i$ in $\Delta$ with respect to the counterclockwise orientation of $S$.  
\end{enumerate}
\end{defn}

To emphasize the difference between the boundary $\partial S$ of $S$ and arcs in $S$, we represent $\partial S$ by smooth curves, and the arcs in $S$ by straight line segments.  

\begin{ex} 
Consider the following triangulation of an 11-gon.  On the left we construct the corresponding quiver.

\hspace*{\fill} \\

 \includegraphics[scale=0.4]{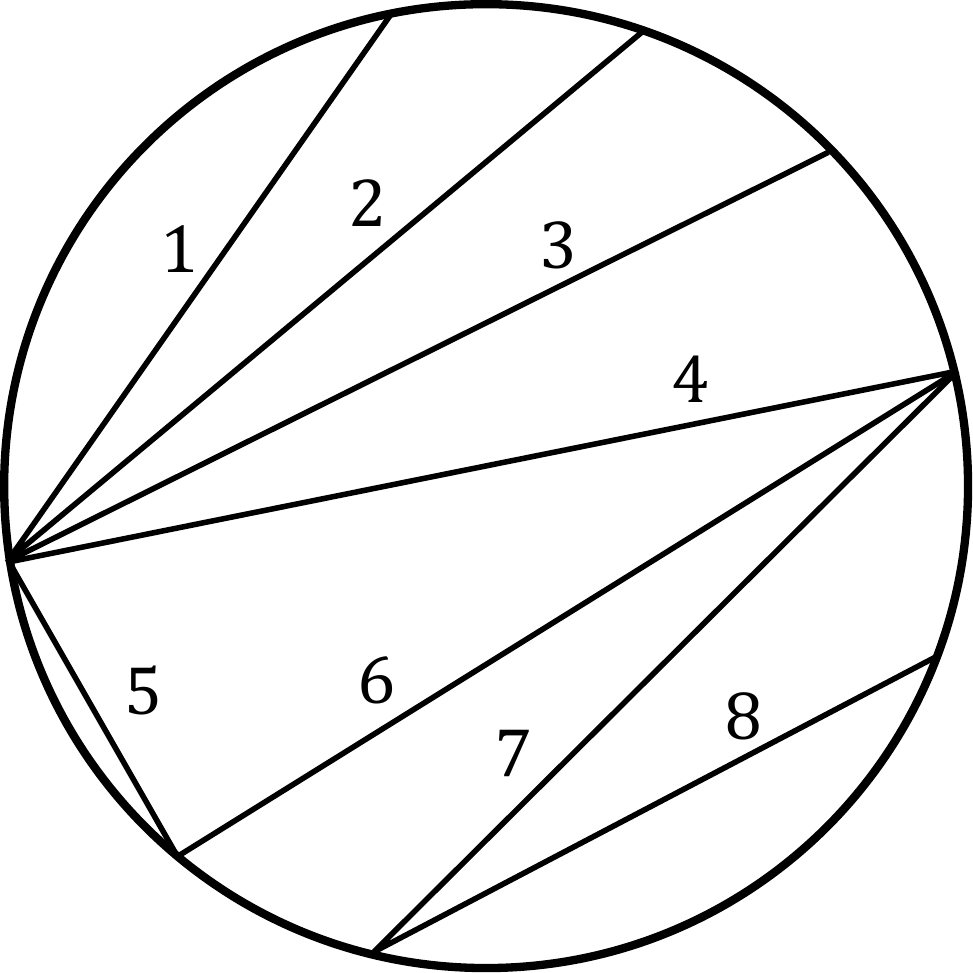} 
$
\hspace{2cm} \vspace{-1cm}
\begin{tikzpicture}[scale=0.8][
            > = stealth, 
            shorten > = 1pt, 
            auto,
            shorten < = 1pt, 
            auto,
            node distance = 1.35cm, 
            semithick 
        ]

        \tikzstyle{every state}=[
            draw = black,
            thick,
            fill = white,
            minimum size = 3mm
        ]

        \node[draw=none, fill=none] (1) {$1$};      
        \node[draw=none, fill=none] (2) [right of= 1] {$2$};
        \node[draw=none, fill=none] (3) [right of= 2] {$3$};
        \node[draw=none, fill=none] (4) [right of=3] {$4$};
        \node[draw=none, fill=none] (5) [right of=4] {$5$};
        \node[draw=none, fill=none] (6) [right of=5] {$6$};
        \node[draw=none, fill=none] (7) [right of=6] {$7$};
        \node[draw=none, fill=none] (8) [right of=7] {$8$};

        \path[->] (1) edge node { } (2);
        \path[->] (2) edge node { } (3);
        \path[->] (3) edge node { } (4);
        \path[->] (4) edge node { } (5);
        \path[->] (5) edge node { } (6);
        \path[->] (7) edge node { } (6);
        \path[->] (7) edge node { } (8);
        \path[->] (6) edge [bend right] node { } (4);

   \end{tikzpicture}  \\ \\$

\end{ex}

\begin{defn} 
A \textit{flip} of an arc $i\in T$ is a transformation of a triangulation $T$ that removes $i$ and replaces it with a (unique) different arc $i'$ that, together with the remaining arcs, forms a new triangulation $\mu_i (T)$.  
\end{defn}

The arc $i'$ can be obtained directly by applying the following steps.
\begin{enumerate}[1.]
\item Identify the quadrilateral in $T$ that has $i$ as its diagonal. The sides of this quadrilateral may be formed by other arcs in $T$ as well as by the boundary segments of $S$. 
\item Replace the arc $i$ by the unique other diagonal $i'$ of this quadrilateral. 
\end{enumerate}

As the notation suggests, the flip of an arc in a triangulation $T$ corresponds to the mutation of the quiver $Q_T$ at the corresponding vertex.  Consider the following result which also holds in a more general case.   

\begin{thm}\cite{FST}
Given a triangulation $T$ of $(S,M)$ and an arc $i$ in $T$ corresponding to the vertex $i$ in the quiver $Q_T$, then  
$$Q_{\mu_i (T)} = \mu_i (Q_T).$$
\end{thm}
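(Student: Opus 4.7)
The plan is to prove the statement by a direct local analysis of the flip operation and the mutation rule. The key observation is that both operations are local: flipping an arc $i \in T$ only alters the two triangles of $T$ that contain $i$, and mutating $Q_T$ at the vertex $i$ only changes arrows incident to $i$ together with arrows between the neighbors of $i$. Thus it suffices to verify that the quiver picture around $i$ transforms correctly; the rest of $Q_T$ and the rest of $T$ are untouched on both sides.

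Concretely, I would set up notation as follows. Let $i$ be a diagonal of a quadrilateral whose vertices are labeled $P,Q,R,S$ counterclockwise so that $i=PR$, and write $a=PQ$, $b=QR$, $c=RS$, $d=SP$ for the four sides, each of which is either another arc of $T$ or a segment of $\partial S$. The two triangles of $T$ containing $i$ are $\triangle PQR$ (with sides $a,b,i$) and $\triangle RSP$ (with sides $c,d,i$). After the flip, $i'=QS$ and the two new triangles are $\triangle PQS$ (with sides $a,d,i'$) and $\triangle QRS$ (with sides $b,c,i'$).

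Next I would read off the local structure of $Q_T$. From $\triangle PQR$ the counterclockwise rule gives arrows $a\to b$, $b\to i$, $i\to a$; from $\triangle RSP$ it gives $c\to d$, $d\to i$, $i\to c$. Applying the mutation rule at $i$ then reverses the arrows at $i$ (producing $a,c\to i$ and $i\to b,d$), creates the composition arrows $b\to a$, $b\to c$, $d\to a$, $d\to c$, and cancels the 2-cycles formed by $a\to b$ with the new $b\to a$, and by $c\to d$ with the new $d\to c$. What survives, in addition to the arrows at $i$, are $b\to c$ and $d\to a$. Comparing with $Q_{\mu_i(T)}$ computed directly from $\triangle PQS$ (yielding $a\to i'$, $i'\to d$, $d\to a$) and $\triangle QRS$ (yielding $b\to c$, $c\to i'$, $i'\to b$), the two local pictures agree after relabeling $i\leftrightarrow i'$.

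The main obstacle, and the main place where care is needed, is to handle the degenerate cases in which one or more of the four sides $a,b,c,d$ is a boundary segment of $\partial S$ and hence contributes no vertex to $Q_T$. In each such case the corresponding arrows (to, from, or between the missing vertices) are simply absent from both $Q_T$ and $\mu_i(Q_T)$, and the 2-cycle cancellations in the mutation step become vacuous wherever the relevant arrows never existed. Enumerating the possibilities, or observing by the obvious symmetries of the picture that only a handful of essentially distinct configurations arise, one checks that the matching of arrows between $\mu_i(Q_T)$ and $Q_{\mu_i(T)}$ persists term by term in every case, completing the argument.
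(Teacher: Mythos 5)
The paper offers no proof of this statement to compare against: it is quoted directly from \cite{FST} as a known result. Your proposal therefore supplies an argument where the paper supplies only a citation, and the argument you give is the standard direct verification (it is essentially the computation carried out in \cite{FST}, and in \cite{CCS} for the $A_n$ case). Your generic computation is correct: with the quadrilateral $PQRS$ oriented counterclockwise one gets $a\to b$, $b\to i$, $i\to a$, $c\to d$, $d\to i$, $i\to c$ before the flip, and both $\mu_i(Q_T)$ and $Q_{\mu_i(T)}$ come out locally as $a\to i$, $c\to i$, $i\to b$, $i\to d$, $b\to c$, $d\to a$ after identifying $i$ with $i'$. One point deserves to be made explicit to make the cancellation step airtight: the removal of the $2$-cycles formed by $a\to b$ with the new $b\to a$ (and $c\to d$ with $d\to c$), and the survival of the new arrows $b\to c$ and $d\to a$, presuppose that in $Q_T$ there is exactly one arrow from $a$ to $b$ and no pre-existing arrow between $b$ and $c$ (resp.\ between $d$ and $a$). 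This holds because in a triangulation of a polygon two arcs lie in at most one common triangle, and the arcs $b$ and $c$, which meet only at the vertex $R$, are separated there precisely by the diagonal $i$, so they share no triangle of $T$; but without this observation the "remove a maximal collection of oriented $2$-cycles" step is not obviously the one you describe. Your treatment of the degenerate cases where some of $a,b,c,d$ are boundary segments is correct as sketched, and the locality remark correctly disposes of the rest of the quiver.
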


There is also a way to determine whether a particular arc corresponds to a red or a green vertex after a series of flips.  Note that in order to determine the coloring of vertices we introduced framed quivers.  In the geometrical setting there is a notion of \emph{laminations} that keeps track of this additional data.  See \cite{FT} for further details.   






Consider an automorphism $\tau$ on the set of arcs in the surface $S$ defined in the following way.  Let $\gamma$ be an arc in $S$, then $\tau \gamma$ is a new arc in $S$ obtained from $\gamma$ by rotating $\gamma$ clockwise until it coincides with the next arc.   If $T$ is a set of arcs in $S$, then by $\tau T$ we denote a new set of arcs obtained by applying $\tau$ to every arc in $T$.   Observe that in a similar manner we can define the inverse automorphism $\tau^{-1}$ such that $\tau \tau^{-1}$ and $\tau^{-1}\tau$ are the identity mappings on the set of arcs.

Although we can perform MGS's on triangulations using flips, when presenting proofs it is often more convenient to perform MGS's on the corresponding quiver. However, we have the following interesting property when dealing with triangulations, which will be a key idea in showing that a given MGS is of minimal length.  For general surfaces, this result is shown in \cite{BQ} using Ginzburg dg algebras.  For the convenience of the reader, we provide a different proof in the case of polygons using $\tau$-tilting theory.    

\begin{prop}\label{3.3}
Suppose $T$ is a triangulation of a polygon $S$. Construct $Q_T$ and proceed with a maximal green sequence to obtain $\overset{\smvee}{Q}_T$.  Then the resulting triangulation corresponding to $\overset{\smvee}{Q}_T$ is precisely $\tau T$.  
\end{prop}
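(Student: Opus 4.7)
The plan is to leverage $\tau$-tilting theory applied to the Jacobian algebra $\Lambda := J(Q_T, W_T)$, where $W_T$ is the standard potential summing the 3-cycles of $Q_T$ (one per internal triangle of $T$). By the standard dictionary established for surface algebras, basic support $\tau$-tilting $\Lambda$-modules correspond bijectively to triangulations of $S$, with mutation of support $\tau$-tilting modules matching arc flips. Under this bijection, the top of the support $\tau$-tilting poset (the module $\Lambda$ itself) corresponds to the triangulation $T$, while the bottom (the zero module) corresponds to a distinguished triangulation $T^{-}$ depending only on $T$.

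Second, I would invoke the Adachi--Iyama--Reiten / Br\"ustle--Yang correspondence: maximal green sequences for $Q_T$ are precisely the maximal chains in the Hasse quiver of basic support $\tau$-tilting $\Lambda$-modules from $\Lambda$ down to $0$. Because the endpoints of any such chain are intrinsic to the poset (not to the chain chosen), the triangulation associated with $\overset{\smvee}{Q}_T$ is the same for every MGS; in particular, it equals $T^{-}$. To finish, I would identify $T^{-}$ with $\tau T$. The cleanest route is through the cluster category $\mathcal{C}_{Q_T, W_T}$: the bottom element $0$ lifts to the shifted cluster-tilting object $\Lambda[1]$, and on the combinatorial side of the unpunctured polygon the shift $[1]$ acts on arcs as the clockwise rotation by one notch, which matches the definition of $\tau$ in the paper. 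Composing these three identifications yields the triangulation associated to $\overset{\smvee}{Q}_T$ is $\tau T$.

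The hard part will be this last geometric identification: pinning down how the shift functor on the cluster category acts on arcs and checking that it agrees with the specific automorphism $\tau$ defined in the paper. Concretely, when the initial triangulation is identified with the projective summands $\{P_i\}$ of $\Lambda$, one must verify that the shifts $\{P_i[1]\}$ are represented by the arcs obtained by rotating each arc of $T$ one notch clockwise. This is a bookkeeping argument, but it is genuinely where the geometry enters, and it must be carried out with careful attention to orientation conventions across the module category, the cluster category, and the combinatorics of the disk. Once this calibration is settled, the proposition drops out because both endpoints of the $\tau$-tilting chain are fixed, forcing the terminal triangulation of every MGS to coincide with $\tau T$.
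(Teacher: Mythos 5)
Your proposal is correct and follows essentially the same route as the paper: both invoke the Adachi--Iyama--Reiten/Br\"ustle--Yang correspondence between maximal green sequences and maximal chains of support $\tau$-tilting modules over the cluster-tilted (Jacobian) algebra, note that the bottom of the poset is intrinsic, and identify it geometrically with the clockwise rotation $\tau T$ via the arc model of \cite{CCS}. The only cosmetic difference is that you phrase the bottom element as the shifted object $\Lambda[1]$ in the cluster category while the paper phrases it as $\tau_B B$ in the module category; since $[1]\cong\tau$ in the 2-Calabi--Yau setting these are the same identification, and both arguments ultimately rest on the same (cited rather than re-derived) compatibility between this operation and the combinatorial rotation of arcs.
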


\begin{proof}
It is known that maximal green sequences are in bijection with maximal chains in the poset of support $\tau$-tilting modules of the corresponding 2-Calabi-Yau tilted algebra $B$.  For references see both \cite{AIR} and \cite{BY}.  For quivers coming from triangulations of polygons $B$ is actually a cluster-tilted algebra.  Under this bijection the quiver $\hat{Q}$ corresponds to the algebra $B$ considered as a $B$-module, and $\overset{\smvee}{Q}$ corresponds to the zero module.  More precisely $\overset{\smvee}{Q}$ corresponds to $\tau_B B$, where $\tau_B$ is the Auslander-Reiten translation in the module category of $B$.  

Translating this bijection into the geometric setting, a support $\tau$-tilting module $M$ corresponds to a triangulation $T_M$ of the polygon.  For a description of the module category of $B$ via arcs in a polygon see \cite{CCS}.  Moreover, it is known that $\tau T_M = T_{\tau_B M}$, where $\tau$ is the automorphism on the set of arcs as defined above.  This implies that $\overset{\smvee}{Q}_T$ corresponds to the triangulation $\tau T$.  
\end{proof}

We say two arcs in a triangulation $T$ are \emph{connected} if they belong to the same triangle $\Delta$ in $T$.  In other words the corresponding vertices in the quiver are connected by an arrow.  We now introduce three basic configurations of arcs found in triangulations of polygons: fans, zigzags, and interior triangles.  Refer to Figure 1 for illustrations.  

An \textit{interior triangle} is a collection of three arcs that form a triangle. In $Q_T$, this corresponds to a 3-cycle.  An \emph{interior triangle configuration} is a maximal collection of connected interior arcs that form a closed (triangulated) polygon $\mathfrak{p}$ inscribed in $S$.   In $Q_T$, the polygon $\mathfrak{p}$ corresponds to a maximal full connected subquiver generated by arrows lying in 3-cycles which is called a \emph{3-cycle configuration}.    

A \textit{fan} is a maximal collection of one or more connected arcs that share a single vertex such that no arc in a fan is also a part of an interior triangle.  In $Q_T$, this corresponds to a single vertex or a series of vertices with arrows pointing in the same direction.

A \textit{zigzag} is a maximal collection of three or more connected arcs composed of fans, where each fan is made up of exactly two arcs.   In $Q_T$, this corresponds to a series of vertices with arrows in between that alternate in direction.

\begin{figure}[ht]
\includegraphics[scale=0.5]{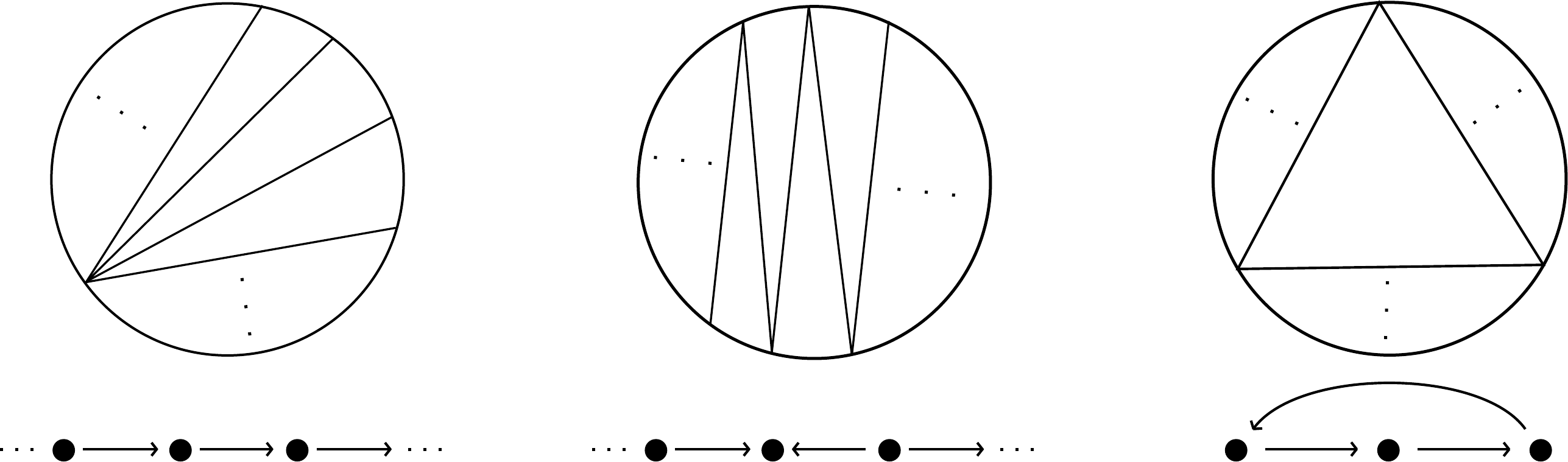}
\caption{Arcs arranged in a fan (left), a zigzag (middle), and an interior triangle (right) and their corresponding quivers.}
\end{figure}

Observe that in a general triangulation these configurations can overlap in at most two connected arcs.  Moreover, it is evident that any given triangulation is composed of some combination of these three configurations. In this paper, we develop separate procedures to find minimal length MGS's for a quiver coming from each configuration before combining them into an overarching procedure that covers all possible triangulations.

\section{Fans and Zigzags}
We begin by establishing a labeling system for vertices belonging to fans and zigzags.  Then we proceed to construct specific maximal green sequences of shortest length for these configurations.  Observe that such quivers are acyclic, hence by Lemma \ref{2.9} and Remark \ref{2.10} we know that the length of the desired sequences coincides with the number of vertices in the quiver.   

\begin{notation} \label{4.4}
For a quiver consisting of a single fan $F_i$ with $n$ vertices, label the only source $F_{i_{1}}$.  Let $F_{i_{2}}$be the vertex adjacent to $F_{i_{1}}$, and $F_{i_{3}}$ be the vertex adjacent to $F_{i_{2}}$. Continue in this way until the last vertex labeled $F_{i_{n}}$ is reached. 
\end{notation}

\begin{prop}\label{4.1} (fan procedure) 
Given an acyclic type $\mathbb{A}$ quiver $Q$ consisting of a \textit{fan} $F_{i}$ with $n$ vertices, a maximal green sequence for ${Q}$ of minimal length is given by the mutation sequence 

\centerline{$\mu_{F_{i}} \coloneqq \mu_{F_{i_{n}}}\mu_{F_{i_{n-1}}}...\mu_{F_{i_{2}}}\mu_{F_{i_{1}}}$.} 
\end{prop}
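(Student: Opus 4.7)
The plan is to verify that $(F_{i_1}, F_{i_2}, \ldots, F_{i_n})$ is an admissible sequence of sources for $Q$; once this is established, Lemma \ref{2.9} immediately gives that $\mu_{F_i} = \mu_{F_{i_n}} \cdots \mu_{F_{i_1}}$ is a maximal green sequence, and Remark \ref{2.10} forces it to be of minimal length since it consists of exactly $n$ mutations.

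First I would unpack Notation \ref{4.4} to observe that $Q$ is the linear quiver
$$F_{i_1} \to F_{i_2} \to \cdots \to F_{i_n},$$
in which $F_{i_1}$ is the unique source and every internal vertex $F_{i_j}$ has only the two neighbors $F_{i_{j-1}}$ and $F_{i_{j+1}}$.

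The main inductive claim I would prove is that after applying $\mu_{F_{i_{j-1}}} \cdots \mu_{F_{i_1}}$ the quiver has the shape
$$F_{i_1} \to F_{i_2} \to \cdots \to F_{i_{j-1}} \leftarrow F_{i_j} \to F_{i_{j+1}} \to \cdots \to F_{i_n},$$
so that $F_{i_j}$ is a source at that stage. The base case $j=1$ is just $Q$. For the inductive step, the vertex $F_{i_j}$ is a source by hypothesis, so step 1 of the mutation rule at $F_{i_j}$ is vacuous (no two-paths $h \to F_{i_j} \to k$ exist), while step 2 simply reverses the two outgoing arrows $F_{i_j} \to F_{i_{j-1}}$ and $F_{i_j} \to F_{i_{j+1}}$, yielding precisely the configuration claimed for index $j+1$.

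With the admissible source sequence in hand, Lemma \ref{2.9} delivers the maximal green sequence, and Remark \ref{2.10} provides the matching lower bound of $n$ on the length of any MGS, so $\mu_{F_i}$ is of minimal length. The only (mild) obstacle is the bookkeeping in the inductive step; however, the fact that each mutation is performed at a source guarantees that step 1 of the mutation rule is always trivial, so no extraneous arrows ever appear and the inductive shape propagates cleanly.
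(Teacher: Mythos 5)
Your proposal is correct and follows essentially the same route as the paper: both arguments show that $(F_{i_1},\dots,F_{i_n})$ is an admissible sequence of sources (each source mutation merely reverses the outgoing arrows and promotes the next vertex to a source), then invoke Lemma \ref{2.9} for maximality and Remark \ref{2.10} for the lower bound of $n$. Your explicit inductive description of the intermediate quiver shapes is slightly more detailed than the paper's, but it is the same argument.
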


\begin{proof} 
As mentioned earlier, Lemma \ref{2.9} shows that a minimal length MGS for any acyclic quiver $Q$ is given by an admissible sequence of sources in $Q$. Therefore, it suffices to show that the mutation sequence outlined above mutates only at sources in each $Q^{i}$, where $Q^{i} \coloneqq \mu_{i}\mu_{i-1}...\mu_{1}(\hat{Q})$, and that each vertex in $Q$ is mutated exactly once. 

By construction, $F_{i_1}$ is a source, so thus far the procedure operates according to the desired assumptions. When it is mutated, its arrow going to $F_{i_2}$ changes direction, and now goes to $F_{i_1}$, making $F_{i_2}$ a source. Analogously, $F_{i_3}$ will become a source in $Q^2$.  It is easy to see that this process will continue all the way through $F_{i_n}$.  Hence, the mutation sequence gives an admissible sequence of sources in $Q$. 
\end{proof}

\begin{notation}\label{4.3}
 For a quiver consisting of a single zigzag $Z$ label each source $C_i$ and each sink $K_{j}$ for some $i$ and $j$.   
 \end{notation}

\begin{notation}
 For a quiver consisting of a single zigzag $Z$, choose an ordering of all sources $C_{i}$ and an ordering of all sinks $K_{j}$.  Let $\mu_{z_{{c}}}$  and $\mu_{z_{{k}}}$ denote the mutation sequences corresponding to this ordering. 
 \end{notation}

In the construction outlined below it will be shown that the order of mutations within each set $Z_{c}$ and $Z_{k}$ is irrelevant.

\begin{prop}\label{4.2} (zigzag procedure)
Given a quiver $Q$ consisting of a single \textit{zigzag} $Z$ a maximal green sequence for $Q$ of minimal length is given by the mutation sequence\\
\centerline{$\mu_{z_{{k}}}\mu_{z_{{c}}}$.} 
\end{prop}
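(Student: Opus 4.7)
My plan is to apply Lemma \ref{2.9} and Remark \ref{2.10} exactly as in the proof of Proposition \ref{4.1}: since $Q$ is an acyclic type $\mathbb{A}$ quiver with $n$ vertices, every MGS has length at least $n$, and conversely any admissible sequence of sources of length $n$ yields an MGS. So the goal reduces to verifying that $\mu_{z_k}\mu_{z_c}$ is an admissible sequence of sources, regardless of how the two internal orderings are chosen.

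The key structural observation is that a zigzag is bipartite: the vertices split into sources $\{C_i\}$ and sinks $\{K_j\}$ where every arrow goes from some $C_i$ to some $K_j$, and no two $C_i$ (and no two $K_j$) are adjacent. From this I would argue in two stages. First, when mutating the $C_i$'s in the chosen order, each $C_{i}$ being mutated is still a source: it was a source in $Q$, and mutating a previous $C_{i'}$ only reverses arrows at $C_{i'}$ (the step 1 additions are trivial because $C_{i'}$ has no incoming arrows), which cannot alter the arrows incident to the non-adjacent vertex $C_i$. Hence the ordering within $\mu_{z_c}$ is irrelevant, and each $\mu_{C_i}$ is a source mutation.

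Second, I would describe the intermediate quiver $Q' := \mu_{z_c}(Q)$: since each source mutation simply reverses all arrows at the mutated vertex, $Q'$ is obtained from $Q$ by globally flipping the orientation of every arrow. Thus in $Q'$ the roles are swapped: every $K_j$ is now a source and every $C_i$ a sink, and we are in the exact same situation as before (bipartite with non-adjacent new sources). The same argument then shows $\mu_{z_k}$ is a sequence of source mutations, in any order, starting from $Q'$. Concatenating, $\mu_{z_k}\mu_{z_c}$ is an admissible sequence of sources for $Q$ of length $n$, so by Lemma \ref{2.9} it is an MGS, and by Remark \ref{2.10} it has minimal length.

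There is no real obstacle here; the only subtlety is making sure that the framed data is compatible (the sources in $Q'$ are genuinely green, not just sources of the underlying mutable quiver). This is automatic because in stage one each $C_i$, being a source before its mutation, has no frozen arrow pointing into it, and after mutation its frozen vertex $C_i'$ points in, making $C_i$ red and hence not interfering; for the $K_j$'s, no mutation during $\mu_{z_c}$ touches a vertex adjacent to any frozen vertex $K_j'$, so the $K_j$ retain their original green color into stage two. Hence Lemma \ref{2.9} applies verbatim.
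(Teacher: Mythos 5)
Your proposal is correct and follows essentially the same route as the paper: reduce via Lemma \ref{2.9} and Remark \ref{2.10} to checking that $\mu_{z_k}\mu_{z_c}$ is an admissible sequence of sources, use the bipartite structure to see that source mutations do not interfere with one another, and observe that after $\mu_{z_c}$ every arrow has been reversed exactly once so that each $K_j$ becomes a source. The extra paragraph about the framed data is harmless but unnecessary, since Lemma \ref{2.9} already guarantees that an admissible sequence of sources is a maximal green sequence.
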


\begin{proof}
Using the same argument as in the beginning of Proposition \ref{4.1}, it suffices to show that the mutation sequence outlined above mutates only at sources at each step, and that each vertex in $Q$ is mutated exactly once. 

It is clear that in $Q$ each source and each sink is only connected to other sinks and other sources respectively.  When one source $C_{{i}}$ is mutated, it cannot affect any other source in $Q$ because no arrow leaves $C_{{i}}$ in $Q$. Therefore, the sequence $\mu_{z_{{c}}}$ mutates at all sources in $Q$. We claim that all $K_{{i}}$ are now sources in $\hat{Q}' \coloneqq \mu_{z_{{c}}}(\hat{Q})$. The arrows incident to each $K_{{i}}$ in $\hat{Q}$ were all going into $K_{{i}}$. Since each arrow is incident to a unique source, each arrow in ${Q}$ has switched directions exactly once after $\mu_{z_{{c}}}$. It follows that each $K_{{i}}$ is a source in $\hat{Q}'$. Moreover, mutating in each $K_{{i}}$ does not affect any other $K_{{j}}$ using the same reasoning as above. It follows that the proposed mutation sequence gives an admissible sequence of sources in $Q$.  
\end{proof}

Thus, we have developed procedures for quivers consisting only of fans or zigzags. Next, we combine the two procedures to produce an MGS for acyclic quivers of type $\mathbb{A}$. 

Given a quiver $Q$ consisting of fans and zigzags, there are vertices that are part of both configurations.  In particular a fan and a zigzag can intersect in two vertices connected by an arrow, and a fan can intersect another fan in at most one vertex.  Therefore, we adapt the following notation.

\begin{notation}\label{4.10}
Consider a full subquiver $F_i$ of $Q$ composed of a fan with at least three vertices.  Let $V_c$ and $V_k$ be the unique source and the unique sink in $F_i$ respectively.    Label the subquiver $F_i\setminus \{V_c, V_k\}$, in accordance to the $i$th fan labeling as described in Notation \ref{4.4}.  Repeat this for every such fan configuration.  The remaining vertices will necessarily be sinks or sources.  Label these vertices $C_i$ and $K_j$ for some $i$ and $j$.  An example is given below.

$$\xymatrix{\cdots \ar[r] &  K_e & F_{i_2} \ar[l] & F_{i_1} \ar[l] & C_a \ar[l]\ar[r]& K_d & C_b \ar[l]\ar[r]& K_g  & F_{j_m}  \ar[l]& F_{j_{m-1}} \ar[l]& \ar[l] \cdots }$$

\end{notation}

\begin{remark}
The subscript for the distinct fans $F_i$ in $Q$ is arbitrary. This is motivated by the fact that in the mutation sequences outlined below no two fans will ever be connected. Moreover, we treat various zigzags in $Q$ as a collection of alternating sinks $C_i$ and sources $K_j$, and the subscript used for labeling these vertices is also arbitrary.  This is motivated by the fact that no two $C_{i}, C_{j}$ and no two $K_{i}, K_{j}$ will ever be connected during the mutation sequences outlined below. 
\end{remark}

\begin{prop} \label{4.5} (zigzag-fan procedure)
Given a quiver $Q$ consisting of fans and zigzags, a maximal green sequence for $Q$ of minimal length is given by the mutation sequence $\mu_{z_k}\mu_f\mu_{z_c}$ made up of three distinct parts:\\

\hspace{3cm} \textit{Source sequence:} \hspace{1.5mm}$\mu_{z_c} \coloneqq  \mu_{C_{i_{g}}}\mu_{C_{i_{g-1}}}\cdots \; \mu_{C_{i_{2}}}\mu_{C_{i_{1}}}.$ 

\hspace{3cm} \textit{Fan sequence:} \hspace{7.0mm}$\mu_{f} \coloneqq  \mu_{F_{h}}\mu_{F_{h-1}}\cdots \;\mu_{F_{2}}\mu_{F_{1}}.$ 

\hspace{3cm} \textit{Sink sequence:} \hspace{5.0mm}$\mu_{z_k} \coloneqq  \mu_{K_{j_{k}}}\mu_{K_{j_{k-1}}}\cdots \;\mu_{K_{j_{2}}}\mu_{K_{j_{1}}}.$ \\


\noindent Here, there are $g$ sources, $h$ fans, and $k$ sinks present in $Q$, and each sequence above mutates in the corresponding set of vertices.   
\end{prop}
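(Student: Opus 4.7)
The plan is to invoke Lemma \ref{2.9} and Remark \ref{2.10}: since any quiver of fans and zigzags is acyclic, the shortest possible MGS has length exactly $n$, and any admissible sequence of sources produces one. So it suffices to verify that $\mu_{z_k}\mu_f \mu_{z_c}$ is an admissible sequence of sources and that each vertex of $Q$ is mutated exactly once. The counts $g + (\text{total fan interior vertices}) + k = n$ take care of the latter, so the heart of the argument is to walk through the three stages of the sequence and confirm at each step that the vertex being mutated currently has no incoming arrows.

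First I would handle the source sequence $\mu_{z_c}$. By Notation \ref{4.10}, each $C_i$ is by construction a source in $Q$, and two distinct sources $C_i,C_j$ are never adjacent (any two sources are separated by a sink or by fan interior vertices). In particular, mutating a source produces no new arrows in the quiver (step 1 of the definition requires $h \to i \to j$, which has no instance when $i$ is a source), and only reverses the outgoing arrows of $C_i$. Consequently, all remaining $C_j$ stay sources, and $\mu_{z_c}$ is admissible. After $\mu_{z_c}$ two things happen in the resulting quiver: every former source $C_i$ becomes a sink, and every vertex $v$ in $Q$ whose only incoming arrows came from $C$'s is now a source. In particular, each pure zigzag sink $K_j$ (adjacent only to $C$'s) is now a source, and for each fan with $\ge 3$ vertices, the interior vertex $F_{i_1}$ adjacent to the old source has lost its only incoming arrow, so it is a source in this new quiver.

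Next I would treat the fan sequence $\mu_f$. Within each fan the situation is exactly the setup of Proposition \ref{4.1} applied to the acyclic chain $F_{i_1} \to F_{i_2} \to \cdots \to F_{i_{n_i-2}}$, now rooted at the new source $F_{i_1}$ (with the extra arrows $F_{i_j}\to C_?$ and the still-incoming arrow from $F_{i_{n_i-2}}$ to the fan sink $V_k=K_?$ playing a passive role, since $F_{i_j}$ is a source at the moment it is mutated and hence mutation creates no new arrows, only reverses outgoing ones). So the same telescoping argument as in Proposition \ref{4.1} shows that mutating $F_{i_1}, F_{i_2}, \dots, F_{i_{n_i-2}}$ in order is admissible within each fan. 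Distinct fans are handled independently because they share no vertices in the labeling scheme of Notation \ref{4.10}, and because every mutation in $\mu_f$ happens at a source and therefore creates no new arrows that could link two fans. After $\mu_f$, the fan-sink $K_?$ has had its single incoming arrow $F_{i_{n_i-2}} \to K_?$ reversed, so it too becomes a source.

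Finally, the sink sequence $\mu_{z_k}$ is now a sequence of source mutations: every $K_j$ has been turned into a source by the preceding two stages (pure zigzag sinks by $\mu_{z_c}$, fan sinks by $\mu_f$), and no two $K_j$'s are ever connected by an arrow (as argued in the zigzag procedure of Proposition \ref{4.2} and by the labeling of Notation \ref{4.10}), so each $K_j$ remains a source until its own mutation. Hence $\mu_{z_k}\mu_f\mu_{z_c}$ is an admissible sequence of sources of length $n$, and by Lemma \ref{2.9} and Remark \ref{2.10} it is a minimal length MGS for $Q$. The main obstacle to watch out for is the bookkeeping at the junctions between a fan and a neighboring zigzag segment: one has to be sure that the "boundary" vertex shared between the two configurations really is a source at the correct moment, and the decisive simplification is that all mutations in the sequence occur at sources, so step 1 of the mutation definition is never triggered and no spurious arrows appear between unrelated components.
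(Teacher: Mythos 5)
Your proposal is correct and follows essentially the same route as the paper: reduce to Lemma \ref{2.9} and Remark \ref{2.10}, then check stage by stage that every mutation in $\mu_{z_k}\mu_f\mu_{z_c}$ occurs at a source (sources first, which promotes the fan heads; the telescoping fan argument of Proposition \ref{4.1}; and finally the sinks, each of whose neighbors has by then been mutated exactly once as a source). The paper organizes this as a local analysis of a fan together with its two boundary vertices $V_1,V_2$, which is exactly the ``junction bookkeeping'' you flag at the end, but the substance of the argument is the same.
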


\begin{proof}
We begin by considering a local configuration in $\hat{Q}$ consisting of a fan $F_{i}$ containing $r+2$ vertices, where $r\geq 1$, together with two vertices $V_{1}$ and $V_{2}$ attached to the endpoints of the fan.  Label $\hat{Q}$ according to Notation \ref{4.10} and obtain the following diagram.  Here we omit the frozen vertices.   
$$\xymatrix{\cdots  V_1 & C_a \ar[r]\ar[l] & F_{i_1}\ar[r] & F_{i_2}\ar[r] & \cdots \ar[r] &F_{i_{r-1}}\ar[r]  &F_{i_r} \ar[r] & K_b & V_2\ar[l]  \cdots }$$

Observe that in $\mu_{z_c}(\hat{Q})$ the vertex $F_{i_1}$ is a source.  Next we perform the fan procedure $\mu_f$, which includes the mutation sequence $\mu_{F_i}$.  By Proposition \ref{4.1} this sequence makes the vertices $F_{i_j}$ for all $j = 1, \dots , r$ red.  Moreover, it is easy to see that in $\mu_f\mu_{z_c}(\hat{Q})$ there is an arrow $K_b \rightarrow F_{i_r}$.  

Next, we claim that there is also an arrow $K_b\rightarrow V_2$ in $\mu_f\mu_{z_c}(\hat{Q})$, thus making $K_b$ a source.  If $V_2$ was originally a source in $Q$, then it was mutated exactly once during $\mu_f\mu_{z_c}$ yielding an arrow $K_b\rightarrow V_2$ in the resulting quiver.  If $V_2$ was not a source in $Q$, then it was a part of some fan $F_j$.  However, during the fan procedure applied to $\mu_{z_c}(\hat{Q})$ there was a source mutation at $V_2$, again producing an arrow $K_b \rightarrow V_2$ in $\mu_f\mu_{z_c}(\hat{Q})$ as desired.  This shows that $K_b$ is indeed a source in this quiver.  Therefore, applying the final set of mutations $\mu_{z_k}$ results in a source mutation at $K_b$.  Observe, that a similar argument implies that $V_1$ will be mutated once along $\mu_{z_k}\mu_f\mu_{z_c}$, and this will be a source mutation.  Hence, we showed that the mutation sequence $\mu_{z_k}\mu_f\mu_{z_c}$ makes every vertex, in the diagram depicted above, red.  This implies that every such configuration of a fan together with the two vertices on the ends becomes red after performing the mutation sequence described in the proposition.  Observe that if $V_1$ or $V_2$ were not present, then the same conclusion still holds.  

For a zigzag configuration $Z$ in $\hat{Q}$ the mutation sequence $\mu_{z_k}\mu_f\mu_{z_c}$  reduces to $\mu_{z_k}\mu_{z_c}$, which by Proposition \ref{4.2} results in all red vertices in $Z$.  It follows that since the procedure produces a maximal green sequence for fan and zigzag configurations, any given $\hat{Q}$ can be thought of as a composition of several such components.  Observe, that mutations in the disjoint components do not impact each other, and by above, the mutations in the overlapping components are also well-behaved, meaning all mutations happen at sources.  Therefore, the mutation sequence given in the proposition dictates to mutate in all vertices exactly once and every mutation is a source mutation. 
\end{proof}

\begin{ex}
Consider the following acyclic quiver $Q$.  Bellow we apply the labeling described in Notation \ref{4.10}.  

$$\xymatrix@C=22pt{1 \ar[r] & 2 & 3 \ar[r]\ar[l] & 4 & 5 \ar[l] & 6 \ar[l] & 7 \ar[l] & 8 \ar[l]\ar[r] & 9 \ar[r]& 10 \ar[r] & 11 }$$

$$\xymatrix@C = 14pt{C_1\ar[r] & K_1 & C_2 \ar[r]\ar[l] & K_2 & F_{1_3}\ar[l] & F_{1_2}\ar[l] & F_{1_1}\ar[l] & C_3\ar[l]\ar[r] & F_{2_1}\ar[r]& F_{2_2}\ar[r] & K_3 }$$

\noindent By Proposition \ref{4.5} a maximal green sequence for $Q$, separated into three distinct parts, is shown below.

$$\mu_{11}\mu_4\mu_2\;\;\mu_{10}\mu_{9}\mu_5\mu_6\mu_7 \;\;\mu_8 \mu_3\mu_1$$

\end{ex}

\section{Interior Triangles}

In this section we consider quivers that are composed only of 3-cycles.  Here we develop a procedure that yields a minimal length MGS for such quivers.  Observe, that unlike the acyclic quivers we discussed in the previous section, these quivers do not admit an admissible sequence of sources.  Therefore, the situation becomes more complicated.  We begin by looking at a single 3-cycle.  

\begin{ex}\label{5.1}
Consider the following quiver with 3 vertices.  A minimal length of a maximal green sequence is 4.  Such a sequence can be obtained by mutating at an arbitrary vertex and moving clockwise or counterclockwise along the cycle until the initial vertex is mutated again.  Below we see that $\mu_1\mu_3\mu_2\mu_1$ is an MGS for this quiver.  We omit the frozen vertices, but the ones emphasized in bold correspond to green vertices.  

$$\xymatrix@C=10pt @R=10pt{& {\bf 3} \ar[dl]&&&&&{\bf 3} &&&&& {\bf 3} &&&&& 3\ar[dl] &&&&& 3\ar[dr] \\ 
{\bf 1} \ar[rr]&& {\bf 2} \ar[ul]&\ar[r]^{\mu_1}&& 1 \ar[ur]&&{\bf 2} \ar[ll] &\ar[r]^{\mu_2}&& 1\ar[rr]\ar[ur] &&2 &\ar[r]^{\mu_3}&& {\bf1}\ar[rr] &&2  &\ar[r]^{\mu_1}&& 1\ar[ur] &&2\ar[ll] }$$
\end{ex}

Based on the sequence for a single 3-cycle we build a maximal green sequence for configurations of adjoined 3-cycles.  Throughout this section we take $Q$ to be a quiver consisting entirely of joined 3-cycles.  We begin by investigating the structure of these cycles.    

\begin{defn}
A \emph{shared vertex} is a vertex that belongs to exactly two distinct 3-cycles.  A \emph{leader} is a non-shared vertex $l$ such that there exists an arrow from $l$ to a shared vertex.  Finally, a {\emph {follower}} is a non-shared vertex $f$ such that there exists an arrow from $f$ to a leader.  
\end{defn}

Note that a given vertex of $Q$ can be a part of at most two 3-cycles. This follows from the fact that quivers arising from triangulations of polygons can have at most two arrows going in and at most two arrows going out at each vertex. 

\begin{defn}
Let $Q$ be a type $\mathbb{A}$ quiver consisting entirely of $3$-cycles. Define the corresponding graph $G_{Q}$ where $ \{ i \mid T_{i} \text{\; is a 3-cycle in\;} Q\}$ is the set of vertices of $G_Q$ and $\{ \xymatrix@C=10pt{i \ar@{-}[r]& j} \mid T_i \text{\;and\;} T_j \text{\; share a vertex}\}$ is the set of edges.    
\end{defn}

\begin{lem}\label{5.0}
The graph $G_Q$ is a tree.  
\end{lem}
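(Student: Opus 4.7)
The plan is to work entirely with a triangulation $T$ of the polygon $S$ realizing $Q$. Each 3-cycle $T_i$ of $Q$ is the image of an interior triangle $\Delta_i$ of $T$ under the quiver-from-triangulation construction, and two 3-cycles $T_i,T_j$ share a quiver vertex precisely when the two interior triangles $\Delta_i,\Delta_j$ share an arc. Thus $G_Q$ is naturally identified with the induced subgraph, on the set of interior triangles, of the full ``triangle-adjacency'' graph $D$ of $T$, whose vertices are all triangles of $T$ and whose edges join pairs of triangles sharing an arc.

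The first step is to show that $D$ itself is a tree. By Remark \ref{3.0}, $T$ has $n+1$ triangles glued along $n$ arcs, and each arc borders exactly two distinct triangles, so $D$ has $n+1$ vertices and $n$ edges. Connectedness of $D$ follows from simple-connectedness of the disk $S$: any two triangles can be linked by a chain of arc-adjacent triangles. A connected graph on $n+1$ vertices with $n$ edges is a tree, so $D$ is a tree.

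Since an induced subgraph of a tree is a forest, $G_Q$ is immediately a forest, which gives acyclicity. It remains to show $G_Q$ is connected. Here I would use the hypothesis that every arrow of $Q$ lies in a 3-cycle, together with the fact that $Q$ is connected (which holds because we are dealing with a single 3-cycle configuration). Given any two interior triangles $\Delta_i$ and $\Delta_j$, pick quiver vertices $a\in \Delta_i$ and $b\in \Delta_j$, and take an undirected path of arrows from $a$ to $b$ in $Q$; each arrow along this path lies in some 3-cycle (equivalently, its triangle is interior), and two consecutive such interior triangles share the intermediate quiver vertex (arc). This produces a walk in $G_Q$ from $T_i$ to $T_j$.

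The main—rather mild—obstacle is the identification of $G_Q$ with the induced subgraph of $D$ on interior triangles; this reduces to the two facts that each arc is shared by exactly two triangles and that an arrow of $Q$ lies in a 3-cycle if and only if the corresponding triangle is interior. Once these identifications are secured, the tree structure of $D$ yields acyclicity and the chain-of-interior-triangles argument yields connectedness, so $G_Q$ is a tree.
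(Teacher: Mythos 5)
Your proof is correct, but it takes a genuinely different route from the paper's. The paper argues by contradiction with a parity count: if $G_Q$ contained a cycle, one could delete $3$-cycles until $G_{Q'}$ is exactly a cycle on $t$ vertices, so that $Q'$ has $2t$ vertices and is realized by a triangulation of a $(2t+3)$-gon whose arcs all lie in interior triangles; the inscribed closed polygon $\mathfrak{p}$ would then have $\tfrac{2t+3}{2}$ vertices, forcing $2t+3$ to be even, a contradiction. You instead embed $G_Q$ into the dual adjacency graph $D$ of \emph{all} triangles of $T$, observe that $D$ has $n+1$ vertices and $n$ edges (one per arc, each arc bounding exactly two triangles) and is connected, hence is a tree, and conclude that $G_Q$, being a subgraph of a tree, is a forest; connectedness of $G_Q$ then follows from connectedness of $Q$ together with the hypothesis that every arrow lies in a $3$-cycle. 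Your route has two advantages: it avoids the reduction step and the parity computation, resting instead on the standard fact that the dual graph of a polygon triangulation is a tree, and it makes the connectedness of $G_Q$ explicit, which the paper's proof leaves implicit (the paper only rules out cycles). The paper's argument, in exchange, stays entirely within the combinatorics of the interior-triangle configuration and does not need the auxiliary graph $D$. Both identifications you flag as the mild obstacle --- that each arc bounds exactly two triangles, and that $3$-cycles of $Q_T$ correspond exactly to interior triangles --- are indeed valid and are consistent with the conventions stated in Section 3 of the paper.
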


\begin{proof}
Let $Q$ consist of $t$ 3-cycles.  It suffices to show that $G_Q$ cannot be a cycle.  Otherwise the quiver $Q$ can be reduced to a quiver $Q'$ by removing sufficiently many 3-cycles such that $G_{Q'}$ is a cycle.  Observe that $Q'$ would still be of type $\mathbb{A}$, because the same reduction can be realized in the corresponding triangulated polygon $(S,M)$ yielding another triangulated polygon $(S', M')$.  

Hence, suppose $G_Q$ is a cycle with $t$ vertices.  Then, if we forget about the orientation of the arrows in $Q$ we obtain the following underlying structure. Here the triangles with dotted edges correspond to an arbitrary number of 3-cycles.

$$\xymatrix@C=10pt@R=10pt{&&\bullet \ar@{-}[dr]\ar@{-}[dl]\\
&\bullet \ar@{-}[dd]\ar@{-}[dl]\ar@{-}[rr]&& \bullet \ar@{--}[dd]\ar@{--}[dr] \\
\bullet \ar@{-}[dr]&&&& \bullet \ar@{--}[dl]\\
&\bullet \ar@{--}[rr]\ar@{--}[dr]&& \bullet \\
&&\bullet \ar@{--}[ur] }$$

Observe that if $Q$ has the configuration above than it consists of $2t$ vertices.  Therefore, by Remark \ref{3.0} the quiver $Q$ can be realized as a triangulation $T$ of an $(2t+3)$-gon.  Moreover, since each arc of $T$ belongs to an interior triangle, then $T$ is an interior triangle configuration.  By definition, the arcs of $T$ form a maximal closed polygon $\mathfrak{p}$ inscribed in the $(2t+3)$-gon.  Thus, $\mathfrak{p}$ consists of $\frac{2t+3}{2}$ vertices.  This implies that $2t+3$ is even, which is a contradiction. This shows that $G_Q$ is a tree.  

\end{proof}

Next we introduce the notion of regions in $Q$.  These definition are motivated by the geometric approach of surfaces and triangulations.  

\begin{defn}\label{5.3}
Let $R_{m}$ be a full subquiver of $Q$ consisting of a single $3$-cycle containing both a leader and a follower. Call it the \textit{innermost region of Q}. Call $T_{m}$ the only $3$-cycle in $R_{m}$.
 \end{defn}
 
\begin{remark} 
Observe that by Lemma \ref{5.0} the innermost region always exists, and in general, there are many possibilities for $R_{m}$. 
\end{remark}

\begin{defn} \label{5.5}
For the same $Q$, define the full subquiver $R_{1}$ of $Q$ to be the union of all $3$-cycles containing leaders and followers in $Q$ but not in $R_{m}$. Call it the \textit{outermost region of Q}. Denote the cycles in $R_{1}$ as $T_{1_{1}}, T_{1_{2}}, ..., T_{1_{n}}$, where the ordering of the second index is arbitrary. For a cycle $T_{1_{i}}$, label the leader $L_{1_{i}}$ and the follower $F_{1_{i}}$. 
\end{defn}

\begin{defn} \label{5.6}
Define the full subquiver $R_{2}$ to be the union of the cycles $T_{2_{1}}, T_{2_{2}}, ..., T_{2_{m}}$ with leaders and followers in $Q_{2} \coloneqq Q \setminus \{V \in R_{1} \mid V$ is a leader or follower in $R_{1} \}$ with the added restriction that $T_{m} \cap R_{2}=\emptyset$. In other words, they are the outermost region of $Q_{2}$. For a cycle $T_{2_{i}}$, label the leader $L_{2_{i}}$ and the follower $F_{2_{i}}$.  Call $R_{2}$ the \textit{region 2} of $Q$. 
\end{defn}

\begin{defn} \label{5.7}
Define the full subquiver $R_{i}$ to be the union of cycles $T_{i_{1}}, T_{i_{2}}, ..., T_{i_{r}}$ with leaders and followers in $Q_{i} \coloneqq Q_{i-1} \setminus \{V \in R_{i-1} \mid V$ is a leader or follower in $R_{i-1} \}$ with the added restriction that $T_{m} \cap R_{i}= \emptyset$. For a cycle $T_{i_{j}}$, label the leader $L_{i_{j}}$ and the follower $F_{i_{j}}$. Call $R_{i}$ the  \textit{region $i$} of $Q$.

There can be at most one $3$-cycle that shares a vertex with $T_{m}$. This will be the region $R_{m'}$, where $m' \geq i$ for all $R_{i} \in Q$. Call $T_{m'_{1}}$ the only $3$-cycle in $R_{m'}$. 
\end{defn}

\begin{ex}
Consider the following quiver whose vertices are labeled according to the definitions above.  Here the innermost region consists of vertices $V_{m_1}$, $V_{m_2}$, and $S_{6_1}$.  \\

\begin{tikzpicture}[
            > = stealth, 
            shorten > = 1pt, 
            auto,
            node distance = 1.5cm, 
            semithick 
        ]

        \tikzstyle{every state}=[
            draw = black,
            thick,
            fill = white,
            minimum size = 3mm
        ]

        \node[draw=none, fill=none] (1) {$S_{6_{1}}$};
        \node[draw=none, fill=none] (2) [above right of=1] {$F_{6_{1}}$};
        \node[draw=none, fill=none] (3) [right of=1] {$L_{6_{1}}$};
        \node[draw=none, fill=none] (4) [left of=1] {$V_{m_{2}}$};
        \node[draw=none, fill=none] (5) [above left of=1] {$V_{m_{1}}$};
        \node[draw=none, fill=none] (6) [above left of=2] {$L_{5_{1}}$};
        \node[draw=none, fill=none] (7) [right of=3] {$F_{4_{2}}$};
        \node[draw=none, fill=none] (8) [above right of=3] {$L_{4_{2}}$};
        \node[draw=none, fill=none] (9) [above right of=2] {$F_{5_{1}}$};
        \node[draw=none, fill=none] (10) [above left=7mm of 6] {$L_{4_{1}}$};     
        \node[draw=none, fill=none] (11) [left of=6] {$F_{4_{1}}$};     
        \node[draw=none, fill=none] (12) [above right of=9] {$L_{1_{2}}$};     
        \node[draw=none, fill=none] (13) [right of=9] {$F_{1_{2}}$};     
        \node[draw=none, fill=none] (14) [right of=8] {$F_{3_{2}}$};
        \node[draw=none, fill=none] (15) [right=5mm of 14] { };
        \node[draw=none, fill=none] (16) [left=4mm of 4] { };         
\node[draw=none, fill=none] (17) [left=4mm of 5] { };       
                  \node[draw=none, fill=none] (18) [left of=11] {$F_{1_{6}}$};                    
                  \node[draw=none, fill=none] (19) [left of=10] {$F_{3_{1}}$};  
                  \node[draw=none, fill=none] (20) [right of=7] {$L_{3_{2}}$};
                                  \node[draw=none, fill=none] (23) [right of=20] {$F_{1_{5}}$};                                    
                       \node[draw=none, fill=none] (24) [right of=14] {$L_{2_{2}}$};      
                       \node[draw=none, fill=none] (25) [left=6cm of 1] { };      
                       \node[draw=none, fill=none] (26) [below right of=18] {$L_{1_{6}}$};
                       \node[draw=none, fill=none] (27) [above right of=19] {$L_{3_{1}}$};
                            \node[draw=none, fill=none] (28) [right of=27] {$F_{2_{1}}$};
                                 \node[draw=none, fill=none] (29) [above right of=27] {$L_{2_{1}}$};
                                         \node[draw=none, fill=none] (30) [below left of=23] {$L_{1_{5}}$};
     \node[draw=none, fill=none] (31) [above right of=14] {$F_{2_{2}}$};
     \node[draw=none, fill=none] (32) [right of=24] {$F_{1_{4}}$};
     \node[draw=none, fill=none] (33) [above right of=24] {$L_{1_{4}}$};
     \node[draw=none, fill=none] (34) [above right of=31] {$L_{1_{3}}$};
     \node[draw=none, fill=none] (35) [left of=34] {$F_{1_{3}}$};
     \node[draw=none, fill=none] (36) [right of=28] {$L_{1_{1}}$};
          \node[draw=none, fill=none] (37) [above right of=28] {$F_{1_{1}}$};
                                              
             \path[->] (1) edge node { } (2);
        \path[->] (2) edge node { } (3);
               \path[->] (3) edge node { } (1);
        \path[->] (5) edge node { } (4);
        \path[->] (4) edge node { } (1);
        \path[->] (1) edge node { } (5);
       \path[->] (7) edge node { } (8);
        \path[->] (8) edge node { } (3);
        \path[->] (6) edge node { } (2);
       \path[->] (2) edge node { } (9);
        \path[->] (9) edge node { } (6);
        \path[->] (20) edge node { } (23);
          \path[<-] (14) edge node { } (24);
               \path[<-] (11) edge node { } (6);
               \path[->] (14) edge node { } (20);
               \path[->] (7) edge node { } (14);
               \path[<-] (37) edge node { } (28);
               \path[<-] (28) edge node { } (36);
               \path[<-] (36) edge node { } (37);
     \path[->] (14) edge node { } (31);
      \path[->] (24) edge node { } (32);
     \path[->] (33) edge node { } (24);
      \path[->] (31) edge node { } (24);
      \path[->] (32) edge node { } (33);
      \path[->] (23) edge node { } (30);
      \path[->] (30) edge node { } (20);
      \path[->] (35) edge node { } (34);
      \path[->] (34) edge node { } (31);
      \path[->] (31) edge node { } (35);
      \path[->] (18) edge node { } (26);
      \path[->] (26) edge node { } (11);
      \path[->] (10) edge node { } (19);
      \path[->] (19) edge node { } (27);
      \path[->] (27) edge node { } (10);
      \path[->] (27) edge node { } (28);
      \path[->] (28) edge node { } (29);
      \path[->] (29) edge node { } (27);
      \path[<-] (6) edge node { } (10);
      \path[->] (20) edge node { } (7);
        \path[->] (11) edge node { } (10);
        \path[->] (3) edge node { } (7);
      \path[<-] (12) edge node { } (13);
      \path[->] (9) edge node { } (13);
    =                         \path[->] (12) edge node { } (9);
             
                  \path[->] (11) edge node { } (18);
       \end{tikzpicture}

\end{ex}

\begin{lem}\label{5.8}
Every region $R_{i}$ is a disjoint union of $3$-cycles. 
\end{lem}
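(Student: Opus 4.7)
The plan is to translate the question into the combinatorics of the tree $G_Q$ supplied by Lemma \ref{5.0}. The key observation is that a $3$-cycle $T$ in a quiver $Q'$ consisting entirely of $3$-cycles (in which each vertex lies in at most two cycles) contains both a leader and a follower if and only if $T$ has exactly one shared vertex in $Q'$, i.e.\ if and only if $T$ is a leaf of $G_{Q'}$. Indeed, the existence of a leader forces at least one shared vertex in $T$, and the existence of a follower forces at least two non-shared vertices in $T$, so in a triangle this pins down exactly one shared vertex.

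Next I would argue by induction on $i$ that each $Q_i$ is again a quiver composed entirely of $3$-cycles, and that $G_{Q_i}$ is the subtree of $G_Q$ obtained by deleting the nodes corresponding to $R_1,\dots,R_{i-1}$. Removing the two non-shared vertices of an $R_{i-1}$-cycle $T$ destroys $T$ as a $3$-cycle while leaving every other $3$-cycle of $Q_{i-1}$ intact; the unique shared vertex of $T$ loses one of its two incident cycles and therefore becomes non-shared in $Q_i$. By the first observation, this means $R_i$ is precisely the set of leaves of $G_{Q_i}$ other than $T_m$.

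At this point the lemma reduces to a purely graph-theoretic fact: in a tree with three or more vertices no two leaves are adjacent, since a leaf has a unique neighbor and two adjacent leaves would form an isolated component of size two, contradicting connectedness. If two cycles in $R_i$ shared a vertex in $Q$, then under the translation above they would be adjacent leaves of $G_{Q_i}$, both distinct from $T_m$, violating this fact. The only exceptional case, when $G_{Q_i}$ has fewer than three vertices, forces $G_{Q_i}$ to consist of $T_m$ and at most one other node, so that $R_i$ contains at most one $3$-cycle and is trivially a disjoint union. Hence the cycles $T_{i_1},\dots,T_{i_r}$ comprising $R_i$ are pairwise vertex-disjoint in $Q$.

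The main obstacle I anticipate is the inductive bookkeeping that identifies $G_{Q_i}$ as the subtree of $G_Q$ obtained by peeling off the nodes of $R_1,\dots,R_{i-1}$, and in particular verifying that the notions of shared vertex, leader, and follower behave correctly under passage from $Q_{i-1}$ to $Q_i$. Once this translation is set up, the leaf-nonadjacency observation in a tree finishes the argument in a single line.
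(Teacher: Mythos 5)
Your proof is correct, and it takes a recognizably different route from the paper's, although both ultimately rest on Lemma \ref{5.0}. The paper argues directly by contradiction: if two attached $3$-cycles $T_a$ and $T_b$ lay in the same region, they would be ``the same number of $3$-cycles away'' from $R_{m'}$, so each would admit a path to $m'$ in $G_Q$ avoiding the other; together with the edge $a$---$b$ this produces a cycle in $G_Q$, which is impossible. You instead make the region structure itself precise: you characterize a $3$-cycle possessing both a leader and a follower as one with exactly one shared vertex, i.e.\ a leaf of the relevant incidence tree, and then identify $R_i$ with the leaf set (minus $T_m$) of the subtree $G_{Q_i}$ obtained by iteratively pruning earlier regions; disjointness then follows from the elementary fact that a tree on at least three vertices has no two adjacent leaves. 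What your approach buys is rigor at exactly the point where the paper is most informal --- the regions as defined in \ref{5.5}--\ref{5.7} are leaf layers under iterated pruning, not genuine distance classes from $m'$, so the paper's ``same number of $3$-cycles away'' assertion requires the kind of bookkeeping you supply (and your leaf/shared-vertex dictionary is reused implicitly elsewhere in Section 5). The cost is that your argument is longer and carries an induction on $i$ that the paper's one-shot path argument avoids. Both are valid; yours is closer to a complete verification of the definitions, while the paper's is the quicker global argument.
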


\begin{proof}
Suppose there are two $3$-cycles $T_{a}$ and $T_{b}$ attached in $Q$, but are not a part of $R_{m}$ or $R_{m'}$. In order for them to be in the same region, it is clear that they must be the same number of $3$-cycles away from $R_{m}$ (and $R_{m'}$). In other words, there must be a path from ${a}$ to ${m'}$ in $G_Q$ that does not pass through ${b}$, and the same must be true for a path from ${b}$ to ${m'}$. In order for this to be true, $G_Q$ must contain a cycle, which by Lemma \ref{5.0} is impossible.  Therefore, $T_a$ and $T_b$ must belong to different regions of $Q$.    
\end{proof}

Next we describe a procedure that yields MGS's for such quivers.  It begins by mutating in the outermost regions first, then moving from one region to the next performing similar mutation sequences until the innermost region is reached.  Afterwards it dictates to move back through the regions in the decreasing order while following the same pattern of mutations in each region.  

\begin{thm}\label{5.9}(3-cycle procedure)
 The following procedure yields an MGS for quivers $Q$ consisting entirely of connected 3-cycles.
 \begin{enumerate}[1.]
\item  Establish $R_{1}, R_{2}, ..., R_{m}$ as outlined in Definitions \ref{5.3} and \ref{5.5}-\ref{5.7}. Label the vertices of $R_{m}$ as $V_{m_{1}}, V_{m_{2}}, S_{m'_{1}}$, where $S_{m'_{1}} \in R_{m'}$. Now consider $\hat{Q}$ from this point on. 
\item  Mutate in all $L_{1_{i}}$. Call this mutation sequence $\mu_{L_{1}}$. 
\item Move consecutively from region $R_i$ to region $R_{i+1}$ and mutate at all leaders $L_{i_j}$ for every region $R_{i}$, where $1 \leq i \leq m'$. Let $\mu_{L_{i}}$ be the mutation sequence for $R_{i}$. 
\item Mutate the vertices of $R_{m}$ starting with an arbitrary vertex and then move in a cyclic order around $R_{m}$, until the vertex that was first mutated is mutated again. Call this mutation sequence $\underline{\mu_{m}}$. 
\item Mutate at $F_{m'_{1}}$ and then at $L_{m'_{1}}$. Call this mutation sequence $\underline{\mu_{m'_{1}}}$. 
\item Move consecutively from region $R_i$ to region $R_{i-1}$, where $1\leq i <m'$,  and mutate at $F_{i_j}$ and then at $L_{i_j}$ for every cycle $T_{i_{j}}\in R_i$.  Let $\underline{\mu_{i}}$ be the mutation sequence for $R_i$. 
\end{enumerate}
The resulting maximal green sequence for $Q$ is
$$\mu_Q : = \; \underline{\mu_1}\;\underline{\mu_2}\;\cdots\; \underline{\mu_{m'-1}}\;\underline{\mu_{m'}}\;\underline{\mu_m}\;\mu_{L_{m'}}\;\mu_{L_{m'-1}}\;\cdots \;\mu_{L_2}\;\mu_{L_1}.$$

\end{thm}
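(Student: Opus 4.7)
The plan is to verify the two defining conditions for an MGS: every mutation in $\mu_Q$ takes place at a currently green vertex, and after the full sequence every non-frozen vertex is red. I would proceed by induction on the number of regions $m'$; the base case $m' = 0$ is a single 3-cycle $T_m$, where $\mu_Q$ reduces to step 4 and coincides with the MGS of Example \ref{5.1}.

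For the inductive step I would first analyze step 2, $\mu_{L_1}$. By Lemma \ref{5.8} the 3-cycles in $R_1$ are pairwise disjoint, so the leaders $L_{1_i}$ are non-adjacent; each $L_{1_i}$ is green in $\hat{Q}$, and because the mutation of one outermost leader is local to its own 3-cycle it leaves the other outermost leaders green. A direct computation of $\mu_{L_{1_i}}$ shows that $T_{1_i}$ is transformed into a configuration where $L_{1_i}$ is red, with arrows $L'_{1_i} \to L_{1_i}$, $V_{1_i} \to L_{1_i}$, $L_{1_i} \to F_{1_i}$, and $F_{1_i} \to L'_{1_i}$, while the old arrow $V_{1_i} \to F_{1_i}$ has been cancelled by 2-cycle reduction. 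Thus after $\mu_{L_1}$, each pair $(L_{1_i}, F_{1_i})$ hangs as a pendant component attached to the rest of the quiver only through the shared vertex $V_{1_i}$.

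The key step is to peel off the outermost region. Let $Q^\dagger$ denote the quiver obtained from $Q$ by deleting the vertices $L_{1_i}$ and $F_{1_i}$; it has $m' - 1$ regions and is of the same form. Because each pendant is attached to the interior through only the single vertex $V_{1_i}$, the generalization of Lemma \ref{2.14} formulated in Remark \ref{2.15} applies with the interior playing the role of $\mathcal{C}$ and the pendants the role of the $\mathcal{D}_k$: the segment of $\mu_Q$ consisting of step 3, step 4, step 5, and of step 6 for $i > 1$ acts on $\mu_{L_1}(\hat{Q})$ exactly as the procedure $\mu_{Q^\dagger}$ acts on $\widehat{Q^\dagger}$. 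By the inductive hypothesis this portion is an MGS for $Q^\dagger$ that turns every interior vertex red, while the pendant data $\{F_{1_i}, L_{1_i}, F'_{1_i}, L'_{1_i}\}$ remains untouched.

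Finally, the block $\underline{\mu_1}$ of step 6 mutates $F_{1_i}$ and then $L_{1_i}$ for each outermost 3-cycle. After the interior MGS, the shared vertex $V_{1_i}$ has become red (the inner sweep has introduced $V'_{1_i} \to V_{1_i}$), which combined with the pendant configuration established above makes $F_{1_i}$ green; a direct computation mimicking the final two steps of Example \ref{5.1} shows that $L_{1_i}$ becomes green after $\mu_{F_{1_i}}$ and that both $F_{1_i}$ and $L_{1_i}$ are red after the pair of mutations. The main obstacle will be justifying the peeling step: one must check that no arrow is ever created between a pendant vertex $F_{1_i}$ (or the frozen $L'_{1_i}$) and the interior during the inner mutation sequence, which follows from the single-vertex attachment through $V_{1_i}$ once Remark \ref{2.15} is applied carefully, together with a direct verification that the special treatment of $T_{m'_1}$ in step 5 correctly sits between the cyclic block $\underline{\mu_m}$ and the outward cleanup sweep.
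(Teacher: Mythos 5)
Your proof is correct and follows the same overall strategy as the paper's: mutate the outermost leaders so that each outermost $3$-cycle detaches into a two-vertex pendant hanging off its shared vertex by a single arrow, invoke Lemma \ref{2.14} to run the inner maximal green sequence undisturbed, and finish each pendant with the pair $\mu_{L}\mu_{F}$. The one genuine difference is the granularity of the peeling: the paper inducts on the number of $3$-cycles and removes a single outermost cycle at a time (the one whose leader is mutated last), applying Lemma \ref{2.14} with one pendant $\mathcal{D}$, whereas you induct on the number of regions and remove the entire outermost region at once via the multi-component version in Remark \ref{2.15}. Your variant is tidier on the bookkeeping side: since $R_2$ of $Q$ is by definition the outermost region of your $Q^\dagger$, the middle segment of $\mu_Q$ is literally the procedure for $Q^\dagger$, so you avoid the paper's assertion that the cycle-by-cycle peeling matches ``some labeling'' of the smaller quiver, and the commutativity of mutations at distinct cycles of one region (Remark \ref{5.22}) comes for free from the symmetric treatment of the pendants; the hypothesis of Remark \ref{2.15} that the attachment vertices be distinct should be noted explicitly, but it is immediate from Lemma \ref{5.8}. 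The local computations (the effect of $\mu_{L_{1_i}}$ on the framed $3$-cycle and of the final $\mu_{L_{1_i}}\mu_{F_{1_i}}$) agree with the paper's; your remark that the redness of the shared vertex is what makes $F_{1_i}$ green is a harmless imprecision, since $F_{1_i}$ is already green right after $\mu_{L_{1_i}}$ and stays green because the pendant is untouched by the inner sequence.
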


\begin{remark} \label{5.22}
Labeling of cycles within a region is arbitrary because it will be shown that mutations in different cycles belonging to the same region commute.  
\end{remark}

\begin{proof}
Observe that if $Q$ consists of a single 3-cylcle then $T_m = Q$ and by Example \ref{5.1} the theorem holds.  

We induct on the number of $3$-cycles. Consider what happens when $Q$ consists of two cycles, as shown in the diagram below. \\ \\
\begin{tikzpicture}[
            > = stealth, 
            shorten > = 1pt, 
            auto,
            node distance = 2.5cm, 
            semithick 
        ]

        \tikzstyle{every state}=[
            draw = black,
            thick,
            fill = white,
            minimum size = 1mm
        ]
      
    \node[draw=none, fill=none] (1) {$V_{m_{1}}$};
    \node[draw=none, fill=none] (2) [above right of=1] {$V_{m_{2}}$};
  \node[draw=none, fill=none] (3) [right of=1] {$S_{1_{1}}$};
  \node[draw=none, fill=none] (4) [above right of=3] {$L_{1_{1}}$};
  \node[draw=none, fill=none] (5) [right of=3] {$F_{1_{1}}$};
    \node[draw=none, fill=none] (6) [left=3.8cm of 1] { };  
    
    \path[->] (2) edge node { } (1);
     \path[->] (1) edge node { } (3);
     \path[->] (3) edge node { } (2);
     \path[->] (4) edge node { } (3);
     \path[->] (3) edge node { } (5);
     \path[->] (5) edge node { } (4);

  \end{tikzpicture} \\ 
We now consider $\hat{Q}$ and omit the frozen vertices in our diagrams.  According to the procedure, we begin by mutating $\hat{Q}$ at $L_{1_{1}}$, shown below on the left.  Note that in terms of frozen vertices, there exist two arrows starting at $F_{1_{1}}$, in particular $F_{1_1}\rightarrow F_{1_1}'$ and $F_{1_1}\rightarrow L_{1_1}'$.  Moreover, we have an arrow $L_{1_{1}}' \rightarrow L_{1_{1}}$. 

Next we can mutate at an arbitrary vertex in $T_{m}$. In this proof we chose to start with $V_{m_{2}}$, and the fact that the procedure still holds if another vertex is chosen is left for the reader to check.  Note that in $\mu_{V_{m_{2}}}\mu_{L_{1_{1}}}(\hat{Q})$ the vertex $S_{1_{1}}$ is green, with arrows $S_{1_1}\rightarrow S_{1_1}'$ and $S_{1_1}\rightarrow V_{m_{2}}'$, and $V_{m_{2}}$ is red, with an arrow $V_{m_{2}}'\rightarrow V_{m_2}$. The diagram below on the right depicts $\mu_{V_{m_{2}}}\mu_{L_{1_{1}}}(\hat{Q})$. \\ \\
\begin{minipage}{.3\textwidth}
\begin{tikzpicture}[
            > = stealth, 
            shorten > = 1pt, 
            auto,
            node distance = 2.5cm, 
            semithick 
        ]

        \tikzstyle{every state}=[
            draw = black,
            thick,
            fill = white,
            minimum size = 1mm
        ]
      
    \node[draw=none, fill=none] (1) { ${ V}_{m_{1}}$};
    \node[draw=none, fill=none] (2) [above right of=1] {$V_{m_{2}}$};
  \node[draw=none, fill=none] (3) [right of=1] { $S_{1_{1}}$};
  \node[draw=none, fill=none] (4) [above right of=3] {$L_{1_{1}}$};
  \node[draw=none, fill=none] (5) [right of=3] { $F_{1_{1}}$};
  
     \path[->] (2) edge node { } (1);
     \path[->] (1) edge node { } (3);
     \path[->] (3) edge node { } (2);
     \path[<-] (4) edge node { } (3);
          \path[<-] (5) edge node { } (4);
\end{tikzpicture}
\end{minipage}%
                      \begin{minipage}{0.3\textwidth}
\centerline{\hspace{10mm}$\xrightarrow{\makebox[2cm]{$\mu_{V_{m_{2}}}$}}$}
\end{minipage}%
 \begin{minipage}{.2\textwidth}   
\begin{tikzpicture}[
            > = stealth, 
            shorten > = 1pt, 
            auto,
            node distance = 2.5cm, 
            semithick 
        ]

        \tikzstyle{every state}=[
            draw = black,
            thick,
            fill = white,
            minimum size = 1mm
        ]
      
    \node[draw=none, fill=none] (1) {$V_{m_{1}}$};
    \node[draw=none, fill=none] (2) [above right of=1] {$V_{m_{2}}$};
  \node[draw=none, fill=none] (3) [right of=1] {$S_{1_{1}}$};
  \node[draw=none, fill=none] (4) [above right of=3] {$L_{1_{1}}$};
  \node[draw=none, fill=none] (5) [right of=3] {$F_{1_{1}}$};
      
    \path[<-] (2) edge node { } (1);
    
     \path[<-] (3) edge node { } (2);
     \path[<-] (4) edge node { } (3);
          \path[<-] (5) edge node { } (4);

  \end{tikzpicture}
  \end{minipage}%
  \\ \\
We now mutate in $V_{m_{1}}$. It is a source, so no frozen vertices except for $V_{m_1}'$ are affected. The diagram below on the left depicts $\mu_{V_{m_{1}}}\mu_{V_{m_{2}}}\mu_{L_{1_{1}}}(\hat{Q})$. Next, we mutate in $S_{1_{1}}$. Since there is an arrow $V_{m_{2}}\rightarrow S_{1_{1}}$ in $\mu_{V_{m_{1}}}\mu_{V_{m_{2}}}\mu_{L_{1_{1}}}(\hat{Q})$, the arrow $V_{m_{2}}\rightarrow V_{m_{2}}'$ will be eliminated, and $V_{m_{2}}$ will have an arrow ending in $S_{1_{1}}'$.  Hence, $V_{m_2}$ is again green in $\mu_{S_{1_{1}}}\mu_{V_{m_{1}}}\mu_{V_{m_{2}}}\mu_{L_{1_{1}}}(\hat{Q})$, which is depicted below on the right. \\ \\
  \begin{minipage}{.3\textwidth}
\begin{tikzpicture}[
            > = stealth, 
            shorten > = 1pt, 
            auto,
            node distance = 2.5cm, 
            semithick 
        ]

        \tikzstyle{every state}=[
            draw = black,
            thick,
            fill = white,
            minimum size = 1mm
        ]
      
       \node[draw=none, fill=none] (1) {$V_{m_{1}}$};
    \node[draw=none, fill=none] (2) [above right of=1] {$V_{m_{2}}$};
  \node[draw=none, fill=none] (3) [right of=1] {$S_{1_{1}}$};
  \node[draw=none, fill=none] (4) [above right of=3] {$L_{1_{1}}$};
  \node[draw=none, fill=none] (5) [right of=3] {$F_{1_{1}}$};
      
    \path[->] (2) edge node { } (1);
    
     \path[<-] (3) edge node { } (2);
     \path[<-] (4) edge node { } (3);
          \path[<-] (5) edge node { } (4);
\end{tikzpicture}
\end{minipage}%
                      \begin{minipage}{0.3\textwidth}
\centerline{\hspace{10mm}$\xrightarrow{\makebox[2cm]{$\mu_{S_{1_{1}}}$}}$}
\end{minipage}%
 \begin{minipage}{.2\textwidth}   
\begin{tikzpicture}[
            > = stealth, 
            shorten > = 1pt, 
            auto,
            node distance = 2.5cm, 
            semithick 
        ]

        \tikzstyle{every state}=[
            draw = black,
            thick,
            fill = white,
            minimum size = 1mm
        ]
      
    \node[draw=none, fill=none] (1) {$V_{m_{1}}$};
    \node[draw=none, fill=none] (2) [above right of=1] {$V_{m_{2}}$};
  \node[draw=none, fill=none] (3) [right of=1] {$S_{1_{1}}$};
  \node[draw=none, fill=none] (4) [above right of=3] {$L_{1_{1}}$};
  \node[draw=none, fill=none] (5) [right of=3] {$F_{1_{1}}$};
      
    \path[->] (2) edge node { } (1);
    
     \path[->] (3) edge node { } (2);
     \path[->] (4) edge node { } (3);
          \path[<-] (5) edge node { } (4);
          \path[->] (2) edge node { } (4);

  \end{tikzpicture}
  \end{minipage}%
  \\ \\
We now mutate in $V_{m_{2}}$. Observe that prior to the mutation, the only nonfrozen vertex with an arrow ending in $V_{m_{2}}$ is $S_{1_{1}}$. The only frozen vertex that $V_{m_{2}}$ has an arrow to is $S_{1_{1}}'$, so during the mutation $\mu_{V_{m_2}}$ the arrow $S_{1_{1}}\rightarrow S_{1_{1}}'$ will be cancelled, leaving only an arrow $V_{m_{2}}'\rightarrow S_{1_{1}}$. The diagram below on the left depicts $Q^{m} \coloneqq \mu_{V_{m_{2}}}\mu_{S_{1_{1}}}\mu_{V_{m_{1}}}\mu_{V_{m_{2}}}\mu_{L_{1_{1}}}(\hat{Q})$. The only green vertex remaining in $Q^{m}$ is $F_{1_{1}}$. Mutating in $F_{1_{1}}$ eliminates the arrow $L_{1_{1}}\rightarrow L_{1_{1}}'$ and adds an arrow $L_{1_{1}}\rightarrow F_{1_{1}}'$, turning $L_{1_1}$ green in $\mu_{F_{1_{1}}}(Q^{m})$.  The resulting quiver is shown below on the right. \\ \\
    \begin{minipage}{.3\textwidth}
\begin{tikzpicture}[
            > = stealth, 
            shorten > = 1pt, 
            auto,
            node distance = 2.5cm, 
            semithick 
        ]

        \tikzstyle{every state}=[
            draw = black,
            thick,
            fill = white,
            minimum size = 1mm
        ]
      
        \node[draw=none, fill=none] (1) {$V_{m_{1}}$};
    \node[draw=none, fill=none] (2) [above right of=1] {$V_{m_{2}}$};
  \node[draw=none, fill=none] (3) [right of=1] {$S_{1_{1}}$};
  \node[draw=none, fill=none] (4) [above right of=3] {$L_{1_{1}}$};
  \node[draw=none, fill=none] (5) [right of=3] {$F_{1_{1}}$};
      
    \path[<-] (2) edge node { } (1);
    \path[->] (3) edge node { } (1);
     \path[<-] (3) edge node { } (2);
             \path[<-] (5) edge node { } (4);
          \path[<-] (2) edge node { } (4);
  
\end{tikzpicture}
\end{minipage}%
                      \begin{minipage}{0.3\textwidth}
\centerline{\hspace{10mm}$\xrightarrow{\makebox[2cm]{$\mu_{F_{1_{1}}}$}}$}
\end{minipage}%
 \begin{minipage}{.2\textwidth}   
\begin{tikzpicture}[
            > = stealth, 
            shorten > = 1pt, 
            auto,
            node distance = 2.5cm, 
            semithick 
        ]

        \tikzstyle{every state}=[
            draw = black,
            thick,
            fill = white,
            minimum size = 1mm
        ]
      
     \node[draw=none, fill=none] (1) {$V_{m_{1}}$};
    \node[draw=none, fill=none] (2) [above right of=1] {$V_{m_{2}}$};
  \node[draw=none, fill=none] (3) [right of=1] {$S_{1_{1}}$};
  \node[draw=none, fill=none] (4) [above right of=3] {$L_{1_{1}}$};
  \node[draw=none, fill=none] (5) [right of=3] {$F_{1_{1}}$};
      
    \path[<-] (2) edge node { } (1);
      \path[->] (3) edge node { } (1);
           \path[<-] (3) edge node { } (2);
             \path[->] (5) edge node { } (4);
          \path[<-] (2) edge node { } (4);

  \end{tikzpicture}
  \end{minipage}%
  \\ \\
Finally, the only green vertex in $\mu_{F_{1_{1}}}(Q^{m})$ is $L_{1_{1}}$. Moreover, there is a unique arrow starting at a nonfrozen vertex and ending in $L_{1_1}$, and it is coming from $F_{1_{1}}$. Mutating in $L_{1_{1}}$ cancels the arrow $F_{1_{1}}\rightarrow F_{1_{1}}'$ and changes nothing else in terms of frozen vertices.  Therefore, we obtain $\mu_{L_{1_{1}}}\mu_{F_{1_{1}}}(Q^{m}) \cong \check{Q}$. 
\\
\begin{center}
\begin{tikzpicture}[
            > = stealth, 
            shorten > = 1pt, 
            auto,
            node distance = 2.5cm, 
            semithick 
        ]

        \tikzstyle{every state}=[
            draw = black,
            thick,
            fill = white,
            minimum size = 1mm
        ]
      
     \node[draw=none, fill=none] (1) {$V_{m_{1}}$};
    \node[draw=none, fill=none] (2) [above right of=1] {$V_{m_{2}}$};
  \node[draw=none, fill=none] (3) [right of=1] {$S_{1_{1}}$};
  \node[draw=none, fill=none] (4) [above right of=3] {$L_{1_{1}}$};
  \node[draw=none, fill=none] (5) [right of=3] {$F_{1_{1}}$};
      
    \path[<-] (2) edge node { } (1);
      \path[->] (3) edge node { } (1);
           \path[<-] (3) edge node { } (2);
             \path[->] (4) edge node { } (5);
          \path[<-] (4) edge node { } (2);
  \path[->] (5) edge node { } (2);
  
  \end{tikzpicture}
\\
\end{center}

Now suppose that the procedure produces an MGS for all quivers consisting entirely of $t$ $3$-cycles, where $t \geq 2$. Consider a quiver $Q$ made up of $(t+1)$ 3-cycles.  Label $Q$ according to the labeling system described in the beginning of this section.  Choose an ordering of $L_{i_j}$ and an ordering of $\mu_{L_{i_j}}\mu_{F_{i_j}}$ within each region $i$.  Then obtain the corresponding mutation sequence $\mu_Q$ as defined in the statement of the theorem.   Note that in the theorem $\mu_Q$ is written in terms of $\mu_{L_i}$ and $\underline{\mu_{i}}$ for all $i = 1, \dots , m'$.  Therefore, it is necessary to show that each of these mutation sequences is well-defined.  This means that the order of mutations of leaders within each set $L_i$ is arbitrary, and the order of mutations $\mu_{L_{i_j}}\mu_{F_{i_j}}$ within the same region $i$ is also arbitrary.   

Observe that by Lemma \ref{5.8} each region is a disjoint union of 3-cycles.  This implies that $\mu_{L_1}$ is well-defined, because the order of mutations of $L_{1_j}$ is irrelevant.   It will be shown later that $\underline{\mu_1}$ is also well-defined.  For now, let $L_{1_Q}\in L_1$ be the the last vertex to be mutated in $\mu_Q$.  Hence, by the argument above $\mu_Q$ can be written as $\mu_{L_{1_Q}}\mu_{F_{1_Q}}\mu_0\mu_{L_{1_Q}}$, where $\mu_0$ is some remaining mutation sequence for $Q$.  

Let $\mathcal{C}$ be a full connected subquiver of $Q$ obtained by removing $F_{1_Q}$ and $L_{1_Q}$ from $Q$.  It follows that $\mathcal{C}$ is made up of $t$ 3-cycles. The diagram below depicts $\mathcal{C}$ and $Q$. 

    \begin{minipage}{.24\textwidth}
    \hspace{5cm}
    \end{minipage}%
                      \begin{minipage}{0.5\textwidth}
     \begin{tikzpicture}[
> = stealth, 
            shorten > = 4pt, ]
            
\node[draw=none, fill=none] at (7.7,0.4) {$\mathcal{C}$};
\node[draw, shape=circle, fill=black, scale=0.5] at (8.5,-2) { };
\node[draw, shape=circle, fill=black, scale=0.5] at (7.3,-3.4) { };
\node[draw, shape=circle, fill=black, scale=0.5] at (9.7,-3.4) { };
\node[draw=none, fill=none] at (7, 3) { };
\node[draw=none, fill=none] at (7, -3) { };

\draw [black] plot [smooth cycle] coordinates {(4.5,0) (6.5,2) (10.5,2) (10.5,0) (8.5,-2)};
\path [->] (8.5,-2) edge node { } (7.3,-3.4); 
\draw [->] (7.3,-3.4) -- node { } (9.7,-3.4);
\draw [->] (9.7,-3.4) -- node { } (8.5,-2);

\draw [decorate,decoration={brace,amplitude=10pt,mirror,raise=4pt},yshift=0pt]
(11.5,-3.8) -- (11.5,2.4) node [black,midway,xshift=0.8cm] {\footnotesize
$Q$};
\end{tikzpicture} 
\end{minipage}%
\\
 
Observe that $\mu_0$ is a mutation sequence for $\mathcal{C}$.  Moreover, there exists a labeling of vertices in $\mathcal{C}$ (if the quiver $\mathcal{C}$ is considered on its own) such that $\mu_0 = \mu_{\mathcal{C}}$, where $\mu_{\mathcal{C}}$ is an MGS for $\mathcal{C}$ obtained from this labeling according to the induction hypothesis.  It also follows that each mutation sequence within $\mu_{\mathcal{C}}$ is well-defined.  Next, we show that $\mu_Q$ is a maximal green sequence for $Q$.  

Let $T_{Q}$ be the $3$-cycle belonging to $Q$ but not $\mathcal{C}$. It follows that $T_{Q} \in R_{1}$ of $Q$.  We begin by mutating in $L_{1_{Q}}$, where $L_{1_{Q}}, F_{1_{Q}}, S_{1_{Q}} \in T_{Q}$. The diagram below on the left depicts $\mu_{L_{1_{Q}}}(\hat{Q})$.

    \begin{minipage}{.24\textwidth}
      \begin{tikzpicture}[
> = stealth, 
            shorten < = 4pt, ]
            
\node[draw=none, fill=none] at (1.6,0.2) {$\mathcal{C}$};
\node[draw, shape=circle, fill=black, scale=0.5] at (2,-1) { };
\node[draw=none, fill=none] at (1.7,-1.2) {$S_{1_{Q}}$};
\node[draw, shape=circle, fill=black, scale=0.5] at (1.4,-1.7) { };
\node[draw=none, fill=none] at (1.1,-1.9) {$F_{1_{Q}}$};
\node[draw, shape=circle, fill=black, scale=0.5] at (2.6,-1.7) { };
\node[draw=none, fill=none] at (2.9, -1.5) {$L_{1_{Q}}$};
\node[draw=none, fill=none] at (7, 3) { };
\node[draw=none, fill=none] at (7, -3) { };
\node[draw=none, fill=none] at (2.2,0) { };

\draw [black] plot [smooth cycle] coordinates {(0,0) (1,1) (3,1) (3,0) (2,-1)};
\draw [<-] (1.4,-1.7) -- node { } (2.6,-1.7);
\draw [<-] (2.6,-1.7) -- node { } (2,-1);

\end{tikzpicture}
\end{minipage}%
                      \begin{minipage}{0.5\textwidth}
\centerline{$\xrightarrow{\makebox[2cm]{$\mu_{\mathcal{C}}$}}$}
\end{minipage}%
 \begin{minipage}{.1\textwidth}
 
 \begin{tikzpicture}[
> = stealth, 
            shorten < = 4pt, ]
            
\node[draw=none, fill=none] at (1.6,0.2) {$\mathcal{C}$};
\node[draw, shape=circle, fill=black, scale=0.5] at (2,-1) { };
\node[draw=none, fill=none] at (1.7,-1.2) {$S_{1_{Q}}$};
\node[draw, shape=circle, fill=black, scale=0.5] at (1.4,-1.7) { };
\node[draw=none, fill=none] at (1.1,-1.9) {$F_{1_{Q}}$};
\node[draw, shape=circle, fill=black, scale=0.5] at (2.6,-1.7) { };
\node[draw=none, fill=none] at (2.9, -1.5) {$L_{1_{Q}}$};
\node[draw=none, fill=none] at (7, 3) { };
\node[draw=none, fill=none] at (7, -3) { };

\draw [black] plot [smooth cycle] coordinates {(0,0) (1,1) (3,1) (3,0) (2,-1)};
\draw [<-] (1.4,-1.7) -- node { } (2.6,-1.7);
\draw [dashed,<-] (2.2,0) -- node { } (2.6,-1.7);

\end{tikzpicture}
\end{minipage}%

Observe that the quiver $\mu_{L_{1_Q}}(\hat{Q})$ satisfies the conditions of Lemma \ref{2.14}.  By induction $\mu_{\mathcal{C}}$ is a maximal green sequence for $\mathcal{C}$, hence part 1 of this lemma implies that all vertices of $\mathcal{C}$ are red in $\mu_{\mathcal{C}}\mu_{L_{1_Q}}(\hat{Q})$.  Moreover, by Lemma \ref{2.14} part 2, the resulting quiver $\mu_{\mathcal{C}}\mu_{L_{1_{1}}}(\hat{Q})$ is made up of $\mu_{\mathcal{C}}(\hat{\mathcal{C}})$ and the subquiver depicted below. 

\begin{center}
\begin{tikzpicture}[
            > = stealth, 
            shorten > = 1pt, 
            auto,
            node distance = 3cm, 
            semithick 
        ]

        \tikzstyle{every state}=[
            draw = black,
            thick,
            fill = white,
            minimum size = 1mm
        ]
     \node[draw=none, shape=circle, fill=black, scale=0.5] (1){};
     \node[draw=none, fill=none] at (0,0.5) {$F_{1_Q}$};
    \node[draw=none, shape=circle, fill=black, scale=0.5] (2) [below of =1]{};
     \node[draw=none, fill=none] at (0,-2.0) {$F'_{1_Q}$};
      \node[draw=none, shape=circle, fill=black, scale=0.5] (3)[right of = 1]{};
      \node[draw=none, fill=none] at (1.8,0.5) {$L_{1_Q}$};
     \node[draw=none, shape=circle, fill=black, scale=0.5] (4)[right of = 2]{};
     \node[draw=none, fill=none] at (1.8,-2.0) {$L'_{1_Q}$};
      
    \path[<-] (2) edge node { } (1);
        \path[->] (3) edge node { } (1);
            \path[<-] (2) edge node { } (1);
                \path[->] (1) edge node { } (4);
                    \path[->] (4) edge node { } (3);
  
  \end{tikzpicture}

\end{center}

In addition, we also know that $\mu_{\mathcal{C}}(\hat{\mathcal{C}})$ is connected to the quiver above by a single arrow starting at $L_{1_Q}$ and ending at some vertex of $\mathcal{C}_0$.  Therefore, we obtain the quiver $\mu_{\mathcal{C}}\mu_{L_{1_Q}}(\hat{Q})$ as shown above on the right.  Here, we omit the frozen vertices.  Also, to emphasize the connection between $\mu_{\mathcal{C}}(\hat{\mathcal{C}})$ and the above quiver we drew a dotted arrow between a vertex in $\mathcal{C}$ and $L_{1_{Q}}$.

In $\mu_{\mathcal{C}}\mu_{L_{1_{Q}}}(\hat{Q})$ the vertex $L_{1_Q}$ is red, because it was red prior to the mutation sequence $\mu_{\mathcal{C}}$, and all vertices of $\mathcal{C}$ are also red.  Therefore, the only green vertex in $\mu_{\mathcal{C}}\mu_{L_{1_{Q}}}(\hat{Q})$ is $F_{1_Q}$.  

Recall, that $\mu_{Q}$, the mutation sequence for $Q$ described in the theorem, can be written as 
$$\mu_{L_{1_{Q}}}\mu_{F_{1_{Q}}}\mu_{\mathcal{C}}\mu_{L_{1_{Q}}}.$$
Hence, the next step is to mutate in $F_{1_{Q}}$, which by the above is a green mutation.  The resulting quiver is shown below on the left. It is clear that since $F_{1_{Q}}$ has an arrow to each of $L_{1_{Q}}'$ and $F_{1_{Q}}'$ in $\mu_{\mathcal{C}}\mu_{L_{1_{Q}}}(\hat{Q})$, mutating in this vertex will eliminate the arrow $L_{1_{Q}}' \rightarrow L_{1_{Q}}$ and add an arrow $L_{1_{Q}} \rightarrow F_{1_{Q}}'$.  It follows that $L_{1_{Q}}$ is now green with an arrow $L_{1_Q} \rightarrow F_{1_{Q}}'$.   

The final step according to the procedure is $\mu_{L_{1_{Q}}}$, and the quiver $\mu_{L_{1_{Q}}}\mu_{F_{1_{Q}}}\mu_{\mathcal{C}}\mu_{L_{1_{Q}}}(\hat{Q})$ is shown below on the right. Since, there exists an arrow $L_{1_{Q}}\rightarrow F_{1_{Q}}'$, the arrow $F_{1_{Q}}' \rightarrow F_{1_{Q}}$ is cancelled. Therefore, $F_{1_{Q}}$ remains red.    

    \begin{minipage}{.24\textwidth}
      \begin{tikzpicture}[
> = stealth, 
            shorten < = 4pt, ]
            
\node[draw=none, fill=none] at (1.6,0.2) {$\mathcal{C}$};
\node[draw, shape=circle, fill=black, scale=0.5] at (2,-1) { };
\node[draw=none, fill=none] at (1.7,-1.2) {$S_{1_{Q}}$};
\node[draw, shape=circle, fill=black, scale=0.5] at (1.4,-1.7) { };
\node[draw=none, fill=none] at (1.1,-1.9) {$F_{1_{Q}}$};
\node[draw, shape=circle, fill=black, scale=0.5] at (2.6,-1.7) { };
\node[draw=none, fill=none] at (2.9, -1.5) {$L_{1_{Q}}$};
\node[draw=none, fill=none] at (7, 3) { };
\node[draw=none, fill=none] at (7, -3) { };

\draw [black] plot [smooth cycle] coordinates {(0,0) (1,1) (3,1) (3,0) (2,-1)};
\draw [<-] (2.6,-1.7) -- node { } (1.4,-1.7);
\draw [dashed,<-] (2.2,0) -- node { } (2.6,-1.7);

\end{tikzpicture}
\end{minipage}%
                      \begin{minipage}{0.5\textwidth}
\centerline{$\xrightarrow{\makebox[2cm]{$\mu_{L_{1_{Q}}}$}}$}
\end{minipage}%
 \begin{minipage}{.1\textwidth}
 
 \begin{tikzpicture}[
> = stealth, 
            shorten < = 4pt, ]
            
\node[draw=none, fill=none] at (1.6,0.2) {$\mathcal{C}$};
\node[draw, shape=circle, fill=black, scale=0.5] at (2,-1) { };
\node[draw=none, fill=none] at (1.7,-1.2) {$S_{1_{Q}}$};
\node[draw, shape=circle, fill=black, scale=0.5] at (1.4,-1.7) { };
\node[draw=none, fill=none] at (1.1,-1.9) {$F_{1_{Q}}$};
\node[draw, shape=circle, fill=black, scale=0.5] at (2.6,-1.7) { };
\node[draw=none, fill=none] at (2.9, -1.5) {$L_{1_{Q}}$};
\node[draw=none, fill=none] at (7, 3) { };
\node[draw=none, fill=none] at (7, -3) { };
\node[draw=none, fill=none] at (2.2,0) { };

\draw [black] plot [smooth cycle] coordinates {(0,0) (1,1) (3,1) (3,0) (2,-1)};
\draw [<-] (1.4,-1.7) -- node { } (2.6,-1.7);
\draw [dashed,->] (1.4,-1.7) to [out=120,in=180] (2.2,0);
\draw [dashed,<-] (2.6,-1.7)  -- node { } (2.2,0);

\end{tikzpicture}
\end{minipage}%

Observe that the last two mutations in $F_{1_{Q}}$ and $L_{1_{Q}}$ did not change the color of any vertices in $\mathcal{C}$ back to green.  Since prior to these mutations all arrows went from $L_{1_{Q}}$ to vertices of $\mathcal{C}$, hence no new arrows could be added between vertices of $\mathcal{C}$ and the frozen vertices $V'$ such that $V\not\in\mathcal{C}_0$. It is also clear that any connection to $\mathcal{C}$ has no impact on the color of $F_{1_{Q}}$ and $L_{1_{Q}}$ after the mutations $\mu_{\mathcal{C}}\mu_{L_{1_Q}}$. It follows that every vertex in $Q$ is red after performing $\mu_Q$, hence $\mu_Q (\hat{Q})\cong \check{Q}$.  Moreover, we obtained the quiver with all red vertices via the procedure outlined in the theorem. By induction, the procedure holds for an arbitrary number of $3$-cycles.  

As a final note, observe that in $\mu_Q$ the mutations in various cycles within the same region commute.  By induction the mutation sequences $\underline{\mu_i}$ and $\mu_{L_i}$ are well-defined for all $i = 2, \dots, m'$.   In the beginning we saw that $\mu_{L_1}$ is also well-defined.  Finally, every 3-cycle in the outermost region of $Q$ will be in an analogous configuration relative to the rest of the quiver as $T_{1_Q}$ was relative to $\mathcal{C}$.  Hence, we can see that $\mu_{L_{1_j}}\mu_{F_{1_j}}$ and $\mu_{L_{1_k}}\mu_{F_{1_k}}$ will commute for every $j$ and $k$.  This completes the proof of the theorem.  
\end{proof}

\begin{remark} \label{5.20}
The procedure in Theorem \ref{5.9} produces an MGS in $n+t$ mutations, where $n =  | Q_{0} |$ and $t$ is the number of $3$-cycles in $Q$. Every vertex must trivially be mutated at least once in any procedure that produces an MGS. Moreover, the procedure mutates each $L_{i_{j}}$ twice. There is one $L_{i_{j}}$ per $3$-cycle. It follows that the number of mutations is $n+t$. 
\end{remark}

Consider the following example that illustrates the procedure in Theorem \ref{5.9}.  

\begin{ex}
Suppose we have the configuration of interior triangles shown below on the left.  Let us find a maximal green sequence for this quiver as outlined in Theorem \ref{5.9}.  We begin by labeling the regions and the corresponding vertices.  \\

\begin{tikzpicture}[
> = stealth, 
shorten > = 1pt, 
auto,
node distance = 1.35cm, 
semithick 
]

\tikzstyle{every state}=[
draw = black,
thick,
fill = white,
minimum size = 3mm
]

\node[draw=none, fill=none] (1) {$1$};

\node[draw=none, fill=none] (2) [right of=1] {$2$};

\node[draw=none, fill=none] (3) [above of=2] {$3$};

\node[draw=none, fill=none] (4) [right of=3] {$4$};

\node[draw=none, fill=none] (5) [above of=4] {$5$};

\node[draw=none, fill=none] (6) [right of=2] {$6$};

\node[draw=none, fill=none] (7) [right of=6] {$7$};

\node[draw=none, fill=none] (8) [right of=5] {$8$};

\node[draw=none, fill=none] (9) [above of=8] {$9$};

\node[draw=none, fill=none] (10) [right of=8] {$10$};

\node[draw=none, fill=none] (11) [above of=10] {$11$};

\node[draw=none, fill=none] (12) [right of=10] {$12$};

\node[draw=none, fill=none] (13) [above of=12] {$13$};

\path[->] (1) edge node { } (2);

\path[->] (2) edge node { } (3);

\path[->] (3) edge node { } (1);

\path[->] (3) edge node { } (4);

\path[->] (4) edge node { } (5);

\path[->] (5) edge node { } (3);

\path[->] (4) edge node { } (6);

\path[->] (6) edge node { } (7);

\path[->] (7) edge node { } (4);

\path[->] (5) edge node { } (8);

\path[->] (8) edge node { } (9);

\path[->] (9) edge node { } (5);

\path[->] (8) edge node { } (10);

\path[->] (10) edge node { } (11);

\path[->] (11) edge node { } (8);

\path[->] (13) edge node { } (10);

\path[->] (10) edge node { } (12);

\path[->] (12) edge node { } (13);

\end{tikzpicture} \begin{tikzpicture}[
> = stealth, 
shorten > = 1pt, 
auto,
node distance = 1.35cm, 
semithick 
]

\tikzstyle{every state}=[
draw = black,
thick,
fill = white,
minimum size = 3mm
]

\node[draw=none, fill=none] (1) {$1$};

\node[draw=none, fill=none] (2) [right of=1] {$2$};

\node[draw=none, fill=none] (3) [above of=2] {$3$};

\node[draw=none, fill=none] (4) [right of=3] {$F_{4_1}$};

\node[draw=none, fill=none] (5) [above of=4] {$L_{4_1}$};

\node[draw=none, fill=none] (6) [right of=2] {$F_{1_2}$};

\node[draw=none, fill=none] (7) [right of=6] {$L_{1_2}$};

\node[draw=none, fill=none] (8) [right of=5] {$F_{3_1}$};

\node[draw=none, fill=none] (9) [above of=8] {$L_{3_1}$};

\node[draw=none, fill=none] (10) [right of=8] {$F_{2_1}$};

\node[draw=none, fill=none] (11) [above of=10] {$L_{2_1}$};

\node[draw=none, fill=none] (12) [right of=10] {$F_{1_1}$};

\node[draw=none, fill=none] (13) [above of=12] {$L_{1_1}$};

\path[->] (1) edge node {\,\,\,$R_m$} (2);

\path[->] (2) edge node { } (3);

\path[->] (3) edge node { } (1);

\path[->] (3) edge node {\,\,\,\;\;$R_4$ } (4);

\path[->] (4) edge node { } (5);

\path[->] (5) edge node { } (3);

\path[->] (4) edge node { } (6);

\path[->] (6) edge node {$R_1$\,\,} (7);

\path[->] (7) edge node { } (4);

\path[->] (5) edge node { $\;\;R_3$} (8);

\path[->] (8) edge node { } (9);

\path[->] (9) edge node { } (5);

\path[->] (8) edge node { \,\,\,$R_2$} (10);

\path[->] (10) edge node { } (11);

\path[->] (11) edge node { } (8);

\path[->] (13) edge node { } (10);

\path[->] (10) edge node {\,\,$R_1$} (12);

\path[->] (12) edge node { } (13);

\end{tikzpicture}\\

First, choose the innermost region to be the triangle 123. We proceed to label the leaders and followers in each region accordingly, see the quiver on the right.  According to the procedure, we begin by mutating at the leaders of each consecutive region, yielding the green sequence $\mu_5 \mu_9 \mu_{11} \mu_7 \mu_{13}$.   Now we turn all of the vertices in the innermost region red by following a single triangle procedure as in Example \ref{5.1}.  Hence, we obtain four additional mutations $\mu_3 \mu_2 \mu_1 \mu_3$. Moving outward from $R_4$ to $R_1$, we turn the vertices of each triangle red in two steps, first by mutating at the followers and then by mutating at the leaders a second time. This gives us the MGS 
$$\mu_7 \mu_6 \mu_{13} \mu_{12} \mu_{11} \mu_{10} \mu_9 \mu_8 \mu_5 \mu_4 \;\;\; \mu_3 \mu_2 \mu_1 \mu_3 \;\;\; \mu_5 \mu_9 \mu_{11} \mu_7 \mu_{13}.$$
Notice, that this MGS has length $19=13+6=n+t$, as desired.
\end{ex}

\section{General Case}

In this section we describe a procedure that yields a maximal green sequence for an arbitrary type $\mathbb{A}$ quiver.  

\begin{defn}
 A \textit{non-isolating vertex} corresponding to a 3-cycle configuration $\mathcal{C}$ is a vertex $i \not \in \mathcal{C}_0$  such that there exists an arrow from $i$ to a vertex in $\mathcal{C}$.   An \textit{isolating vertex} corresponding to $\mathcal{C}$ is a vertex $j\not \in \mathcal{C}_0$ such that there exists an arrow going to $j$ from a vertex in $\mathcal{C}$. 
 \end{defn}
 
\begin{defn}
 A 3-cycle configuration $\mathcal{C}$ is called \textit{isolated} if each vertex $j\not \in \mathcal{C}_0$ connected to $\mathcal{C}$ by an arrow is an isolating vertex. 
 \end{defn}
 
\begin{defn} Two triangle configurations are \emph{connected by a fan} if there exists a fan containing both an isolating and a non-isolating vertex corresponding to these configurations, as shown below.  Call such a fan a \textit{connecting fan}.
\end{defn}

\begin{center} 
\begin{tikzpicture}[scale=0.5]
[ 
> = stealth, 
            shorten > = 1pt, 
            auto,
            node distance = 2.5cm, 
            semithick 
        ]

        \tikzstyle{every state}=[
            draw = black,
            thick,
            fill = white,
            minimum size = 6mm
        ]
				
				\node[draw=none, fill=none] (1) {$\bullet$};
				\node[draw=none, fill=none] (2) [above right of =1] {$\bullet$};
			   \node[draw=none, fill=none] (3) [right =1] {$\bullet$};
				\node[draw=none, fill=none] (4) [right =3, xshift=-.8cm] {$\bullet$};
        \node[draw=none, fill=none] (5) [right =4, xshift=-.60cm] {$\bullet$};
        \node[draw=none, fill=none] (6) [right =5, xshift=-.4cm] {$\cdots$};
        \node[draw=none, fill=none] (7) [right =6, xshift=-.2cm] {$\bullet$};
        \node[draw=none, fill=none] (8) [right =7] {$\bullet$};
				\node[draw=none, fill=none] (9) [above right of =8] {$\bullet$};
				\node[draw=none, fill=none] (10) [right =8, xshift=.2cm] {$\bullet$};
				
				
				\node at (8.5, -1) (13) {$\underbrace{\hspace{4cm}}_{\text{\small{connecting fan}}}$};
				\node[] at (4.8,3) (11) {\small{isolating}};
				\node[] at (12,3) (12) {\small{non-isolating}};
			                                
				\path[->] (1) edge node { } (2);	
				\path[->] (2) edge node { } (3);														
				\path[->] (3) edge node { } (1);														
				\path[->] (3) edge node { } (4);															
				\path[->] (4) edge node { } (5);															
				\path[->] (5) edge node { } (6);
				\path[->] (6) edge node { } (7);
				\path[->] (7) edge node { } (8);
			  \path[->] (8) edge node { } (9);
				\path[->] (9) edge node { } (10);
				\path[->] (10) edge node { } (8);

\path[->] (11) edge node { } (4);	
\path[->] (12) edge node { } (7);

\end{tikzpicture}
\end{center}

Similarly, two triangle configurations are \emph{connected by an arrow} if there exists an arrow from one triangle configuration to the other.  Call such an arrow a \emph{connecting arrow}. 

\begin{center} 
\begin{tikzpicture}[scale=0.9]
[ 
> = stealth, 
            shorten > = 1pt, 
            auto,
            node distance = 2.5cm, 
            semithick 
        ]

        \tikzstyle{every state}=[
            draw = black,
            thick,
            fill = white,
            minimum size = 6mm
        ]
				
				\node[draw=none, fill=none] (1) {$\bullet$};
				\node[draw=none, fill=none] (2) [above right of =1] {$\bullet$};
			   \node[draw=none, fill=none] (3) [right =1, xshift=-.14cm] {$\bullet$};
				\node[draw=none, fill=none] (4) [right =3, xshift=-.99cm] {$\bullet$};
        \node[draw=none, fill=none] (5) [above right of =4] {$\bullet$};
				\node[draw=none, fill=none] (6) [right of =4] {$\bullet$};
			            \node at (1.85, -.5) (13) {$\underbrace{\hspace{1.3cm}}_{\text{\small{connecting arrow}}}$};

				\path[->] (1) edge node { } (2);	
				\path[->] (2) edge node { } (3);														
				\path[->] (3) edge node { } (1);														
				\path[->] (3) edge node { } (4);															
				\path[->] (4) edge node { } (5);															
				\path[->] (5) edge node { } (6);
				\path[->] (6) edge node { } (4);

\end{tikzpicture}
\end{center}

\begin{lem} \label{6.1}
Let $Q$ be a quiver containing a 3-cycle configuration $\mathcal{C}$ such that there is no connecting fan or a connecting arrow attached to any non-isolating vertices corresponding to $\mathcal{C}$. Then $\mathcal{C}$ can be isolated via green mutations at sources.
\end{lem}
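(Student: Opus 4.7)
Let $N$ be the set of non-isolating vertices for $\mathcal{C}$; each $i\in N$ carries an arrow $i\to v_i$ into some $v_i\in\mathcal{C}_0$, and isolating $\mathcal{C}$ amounts to reversing every such arrow. I will exhibit an acyclic subquiver $F\subseteq Q$ containing $N$ on which the zigzag-fan procedure of Proposition~\ref{4.5} can be performed, and then verify that the resulting sequence, read as mutations of $\hat{Q}$, consists of green source mutations that flip exactly the arrows $i\to v_i$.

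Define $F$ to be the full subquiver of $Q$ generated by $N$ together with every vertex reachable from $N$ by directed paths in $Q$ that avoid $\mathcal{C}$.  The hypothesis that no non-isolating vertex lies in a connecting fan or a connecting arrow forces any fan or zigzag containing a vertex of $N$ to be attached to $\mathcal{C}$ alone; in particular $F$ contains no $3$-cycle and is a disjoint union of fans and zigzags whose only arrows to $\mathcal{C}$ are the outgoing arrows $i\to v_i$ for $i\in N$.  Using that every oriented cycle in a type $\mathbb{A}$ quiver is a $3$-cycle and that $\mathcal{C}$ is strongly connected, one further checks that no isolating vertex for $\mathcal{C}$ lies in $F$: such a $j$ with $v'\to j$ and a path in $F$ to some $i\in N$, together with $i\to v_i$ and a return path in $\mathcal{C}$ from $v_i$ to $v'$, would force an oriented cycle of length at least four in $Q$.

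Now apply Proposition~\ref{4.5} to $F$ to obtain a maximal green sequence $\mu_F$ for $F$ in which each vertex is mutated exactly once, always at a source of the current state of $F$.  Read as mutations of $\hat{Q}$, each of these is still a source mutation of the ambient quiver: a vertex $z\in F\setminus N$ has no $\mathcal{C}$-neighbor and hence is a $Q$-source if and only if it is an $F$-source, while for $z\in N$ the only $Q$-neighbors of $z$ outside $F$ lie in $\mathcal{C}$ and are reached by outgoing arrows $z\to v_z$, so sourceness in $F$ implies sourceness in $Q$.  Because mutation at a source creates no new arrows (there is no $2$-path $h\to z\to k$ through a source), no new arrows between $F$ and $\mathcal{C}$ appear during $\mu_F$, so this source characterization is preserved throughout.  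Since every non-frozen vertex of $\hat{Q}$ is initially green and $\mu_F$ mutates each vertex only once, every mutation is also green.

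When $i\in N$ is mutated (as a source), the outgoing arrow $i\to v_i$ is reversed into $v_i\to i$, so $i$ becomes isolating; since no new arrows between $\mathcal{C}$ and $Q\setminus\mathcal{C}$ are introduced, $\mu_F(\hat{Q})$ has every external vertex adjacent to $\mathcal{C}$ isolating, and $\mathcal{C}$ is isolated as required.  The main obstacle is the structural analysis controlling $F$: the hypothesis on connecting fans and arrows together with the type $\mathbb{A}$ restriction that all oriented cycles are $3$-cycles is exactly what is needed to ensure that $F$ is acyclic, free of isolating vertices, and meets $\mathcal{C}$ only through the non-isolating arrows — after which the result reduces to the already established zigzag-fan procedure.
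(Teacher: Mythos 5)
Your overall strategy --- turn each non-isolating vertex into a source by first performing source mutations on the acyclic part of $Q$ feeding into it, then observe that source mutations create no new arrows and that mutating each vertex at most once keeps every mutation green --- is essentially the paper's argument, packaged globally through Proposition~\ref{4.5} rather than fan by fan. The paper argues locally: for each non-isolating vertex $i$, either $i$ is a source, or $i$ is the terminal vertex of a directed fan whose opposite end must be a source because the hypothesis forbids that fan from being a connecting fan; one then runs the fan procedure on that single fan and never needs the full zigzag-fan machinery.

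There is, however, a concrete error in your construction: your $F$ points the wrong way. You define $F$ as $N$ together with the vertices \emph{reachable from} $N$ by directed paths, i.e.\ the downstream vertices, whereas every subsequent step silently uses the set of vertices \emph{from which} $N$ is reachable. With the definition as written, the fan $j_1\to j_2\to\cdots\to j_k\to i$ terminating at a non-source $i\in N$ is not contained in $F$, so the claim that ``for $z\in N$ the only $Q$-neighbors of $z$ outside $F$ lie in $\mathcal{C}$'' fails, and mutating $i$ as an ``$F$-source'' is not a source mutation of $Q$ (the arrow $j_k\to i$ is still present). Your own argument excluding isolating vertices from $F$ --- which builds an oriented cycle from a path in $F$ \emph{to} some $i\in N$ --- also presupposes the upstream definition. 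Replacing $F$ by the upstream closure of $N$ (avoiding $\mathcal{C}$) repairs this, since in-neighbors of upstream vertices are again upstream; but you should then justify, rather than assert, that this $F$ is a disjoint union of fans and zigzags to which Proposition~\ref{4.5} applies and that its sink-sequence mutations remain $Q$-source mutations. The paper sidesteps all of this by invoking only the fan procedure on the single directed fan ending at each non-isolating vertex, whose far end is a source precisely because the hypothesis rules out connecting fans and arrows.
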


\begin{proof} 
We must only consider the non-isolating vertices corresponding to $\mathcal{C}$.  Suppose a non-isolating vertex $i$ that has an arrow into $\mathcal{C}$ is a source. Mutating at this vertex will be a green source mutation. If $i$ is not a source, then by assumption $i$ cannot belong to some other 3-cycle configuration.  Hence, there is an arrow $j \rightarrow i$, where $j$ belongs to some zigzag-fan configuration. In that case, if $j$ is a source mutating at $j$ will turn $i$ into a source. Hence, $i$ can also be mutated afterwards.  

Finally, if $j$ is not a source then $i$ is part of a larger fan $F$.  Moreover, the other end of the fan $F$ will be a source, since it cannot be a connecting fan by assumption. Thus, a fan procedure can be applied to $F$, which dictates to mutate at every vertex in $F$ once, and every mutation is a source mutation.  The last mutation in this sequence will be at $i$, making it an isolating vertex.  In this way all non-isolating vertices can been mutated, making $\mathcal{C}$ isolated. 
\end{proof}

Next we describe a way to obtain a maximal green sequence for an arbitrary quiver mutation equivalent to type $\mathbb{A}$.  The procedure relies on isolating each 3-cycle configuration via the construction in Lemma \ref{6.1}, and then applying the 3-cycle procedure.  Recall that if $F$ is a fan shown below.  
$$\xymatrix{i_1 \ar[r] & i_2 \ar[r] & \cdots \ar[r] & i_{r-1} \ar[r] & i_r}$$
Then by Proposition \ref{4.1}, the fan procedure $\mu_F = \mu_{i_1}\mu_{i_2} \cdots \mu_{i_r}$ is a maximal green sequence for this quiver.  

Let $\mathcal{C}$ be a 3-cycle configuration in $Q$, by $\text{Ni}(\mathcal{C})$ we denote the set of all non-isolating vertices corresponding to $\mathcal{C}$.  

\begin{thm}\label{6.4}
Consider an arbitrary quiver $Q$ arising as a triangulation of a polygon. The following procedure yields a maximal green sequence for $Q$.
\vspace{11pt}
\begin{enumerate}[1.]
\item  Let $\mathcal{C}_0$ be a 3-cycle configuration such that no $v_j \in \textup{Ni}(\mathcal{C}_0)$ is also a part of a connecting fan or a connecting arrow.  For every such $v_j$ there exists a fan $F_j$ in $Q$ containing $v_j$.  Let $\mu_{F_j}$ denote the fan procedure on $F_j$.  Perform the following mutation sequence $\mu_{\mathcal{C}_0}\mu_{F^0}$ on $\hat{Q}$, where

\vspace{11pt}
$\hspace{3cm}\mu_{F^0}: \hspace{.5cm}\textup{the composition of all}\; \mu_{F_j} \;\textup{for every } v_j \in \textup{Ni}(\mathcal{C}_0).$

$\hspace{3cm}\mu_{\mathcal{C}_0}\,: \hspace{.52cm} \textup{the 3-cycle procedure for}\; \mathcal{C}_0.$

\vspace{11pt}
\noindent Let $Q^1$ be the full subquiver obtained from $Q$ by deleting the mutated vertices. 

$$Q^1 = (Q\setminus\mathcal{C}_0) \setminus \bigcup_{v_j\in \textup{Ni}(\mathcal{C}_0)} F_j$$

\item Perform step 1 for the quiver $Q^1$, to obtain a new quiver $Q^2$.  Repeat inductively to generate $Q^i$, where 

$$Q^i = (Q^{i-1}\setminus\mathcal{C}_{i-1}) \setminus \bigcup_{v_j\in \textup{Ni}(\mathcal{C}_{i-1})} F_j$$

and the corresponding mutation sequences $\mu_{\mathcal{C}_i}\mu_{F^{i}}$, where  

\vspace{11pt}
$\hspace{3cm}\mu_{F^i}: \hspace{.5cm}\textup{the composition of all}\; \mu_{F_j} \;\textup{for every } v_j \in \textup{Ni}(\mathcal{C}_i).$

$\hspace{3cm}\mu_{\mathcal{C}_i}\,: \hspace{.52cm} \textup{the 3-cycle procedure for}\; \mathcal{C}_i.$

\vspace{11pt}
\noindent In this way obtain a quiver $Q^m$ that does not contain any 3-cycles.  Hence, $Q^m$ is a possibly disjoint union of acyclic quivers.  Perform the zigzag-fan procedure $\mu_m$ on $Q^m$.  
\end{enumerate}

\vspace{11pt}
\noindent The resulting maximal green sequence is 
$$\mu_m\;\;\mu_{\mathcal{C}_{m-1}}\mu_{F^{m-1}}\;\;\cdots\;\;\mu_{\mathcal{C}_1}\mu_{F^1}\;\;\mu_{\mathcal{C}_0}\mu_{F^0}.$$

\end{thm}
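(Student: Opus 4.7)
My plan is to induct on the number $N$ of 3-cycle configurations in $Q$. The base case $N=0$ is immediate: $Q$ is acyclic, so the procedure reduces to the zigzag-fan procedure $\mu_m$, which is an MGS by Proposition \ref{4.5}. For the inductive step with $N \geq 1$, I need to show first that a suitable $\mathcal{C}_0$ exists and then that one pass $\mu_{\mathcal{C}_0}\mu_{F^0}$ of the procedure peels off $\mathcal{C}_0$ together with its attached fans, leaving a strictly smaller type $\mathbb{A}$ subquiver to which the inductive hypothesis applies.

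To find $\mathcal{C}_0$, I would form a directed graph $\Gamma_Q$ whose vertices are the 3-cycle configurations of $Q$ and whose edges go from $\mathcal{C}$ to $\mathcal{C}'$ whenever a connecting fan or arrow emanates from an isolating vertex of $\mathcal{C}$ into a non-isolating vertex of $\mathcal{C}'$. Using the tree-like nature of triangulations of a disk, one verifies that $\Gamma_Q$ is a forest; any source of a finite forest satisfies the required hypothesis that its non-isolating vertices avoid all connecting fans and arrows, and at least one such source exists. With $\mathcal{C}_0$ chosen, Lemma \ref{6.1} produces the fan procedure $\mu_{F^0}$ as a sequence of green source mutations, and Proposition \ref{4.1} ensures that afterwards every vertex of each $F_j$ is red. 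A direct check, using that each $v_j$ is a source precisely when mutated so that step 1 of mutation creates no new arrows between $\mathcal{C}_0$ and $F_j$, shows that $\mathcal{C}_0$ survives $\mu_{F^0}$ unchanged as a full subquiver and is now isolated: every remaining arrow between $\mathcal{C}_0$ and the rest of the quiver points from $\mathcal{C}_0$ into an isolating vertex.

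At this point the quiver $\mu_{F^0}(\hat{Q})$ is in the setup of Remark \ref{2.15}, with $\mathcal{C} = \mathcal{C}_0$ and the components $\mathcal{D}_k$ being the remaining subquivers attached to $\mathcal{C}_0$ by single outgoing arrows. Since $\mu_{\mathcal{C}_0}$ is an MGS for $\mathcal{C}_0$ on its own by Theorem \ref{5.9}, Remark \ref{2.15} guarantees that after $\mu_{\mathcal{C}_0}\mu_{F^0}$ every vertex of $\mathcal{C}_0$ is red, the $\mathcal{D}_k$ are unchanged as subquivers, and the only remaining connections to $\mathcal{C}_0$ are single arrows going from some vertex of $\mathcal{D}_k$ back into $\mathcal{C}_0$. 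In particular, every vertex of $\mathcal{C}_0 \cup \bigcup_j F_j$ is red, and the subquiver $Q^1 = (Q \setminus \mathcal{C}_0) \setminus \bigcup_j F_j$ persists intact.

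The main technical obstacle is to show that the subsequent mutations $\mu_m\mu_{\mathcal{C}_{m-1}}\mu_{F^{m-1}}\cdots\mu_{\mathcal{C}_1}\mu_{F^1}$, produced by applying the inductive hypothesis to $Q^1$, do not disturb the red status of the already-processed vertices. To address this, I would again appeal to the multi-component version of Lemma \ref{2.14}, this time taking $\mathcal{C}_0 \cup \bigcup_j F_j$ to be a union of static components attached to $Q^1$ by single arrows; since none of their vertices are mutated again, the same cancellation argument as in Remark \ref{2.15} prevents any red vertex from being re-greened during the remaining mutations. Granting this, the composite mutation sequence transforms $\hat{Q}$ into a quiver whose every non-frozen vertex is red, completing the induction.
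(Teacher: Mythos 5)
Your proposal is correct and follows essentially the same route as the paper: choose a 3-cycle configuration $\mathcal{C}_0$ with no connecting fan or arrow entering its non-isolating vertices (your directed-forest argument is a mild repackaging of the paper's chain-counting argument for existence), isolate it via Lemma \ref{6.1}, apply the 3-cycle procedure using Lemma \ref{2.14} and Remark \ref{2.15}, and then argue — again via Lemma \ref{2.14} with the roles of the mutated and static components exchanged — that the later mutations on $Q^1$ cannot re-green the already reddened vertices. The only organizational difference is that you run an explicit induction on the number of 3-cycle configurations where the paper iterates the peeling step directly; the substance is the same.
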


\begin{proof} 
If $Q$ does not contain any 3-cycle configuration then $Q = Q^m$, and an MGS for $Q$ is given by a zigzag-fan procedure.  Therefore, the theorem above holds.

Given a quiver $Q$ that contains 3-cycle configurations, there must be at least one 3-cycle configuration that does not have a non-isolating vertex that is also a part of a connecting fan or a connecting arrow.  This follows from the fact that given a chain of $s$ 3-cycle configurations connected by fans or arrows, there can be no more than $s-1$ connecting fans or arrows. Thus, there cannot be a connecting fan or a connecting arrow starting at each of the $s$ 3-cycle configurations.  This shows that $\mathcal{C}_0$ exists.  

By Lemma \ref{6.1} the sequence of mutations $\mu_{F^0}$ isolates $\mathcal{C}_0$.  Then, by Lemma \ref{2.14} and Remark \ref{2.15}, we can perform the 3-cycle procedure on $\mathcal{C}_0$ making all vertices in this 3-cycle red, while preserving the color of vertices in each $F_j$.  Moreover, by part 2 of the same lemma we see that the mutation sequence $\mu_{\mathcal{C}_0}\mu_{F^0}$ does not affect the vertices outside of $\mathcal{C}_0$.  Hence,  in $\mu_{\mathcal{C}_0}\mu_{F^0}(\hat{Q})$ if a red vertex $x$ is connected to a green vertex $y$ , then there exists an arrow $y\rightarrow x$.  Since, we will not mutate at these red vertices a source mutation in $Q^1$ corresponds to a source mutation in $\mu_{\mathcal{C}_0}\mu_{F^0}(\hat{Q})$.  This together with Lemma \ref{2.14} implies that the subsequent mutations $\mu_{\mathcal{C}_1}\mu_{F^1}\mu_{\mathcal{C}_0}\mu_{F^0}(\hat{Q})$ will not turn any of the red vertices in $\mu_{\mathcal{C}_0}\mu_{F^0}(\hat{Q})$ back to green.  

Applying the same reasoning, we see that with every additional mutation step $\mu_{\mathcal{C}_i}\mu_{F^i}$ the red vertices of $\mu_{\mathcal{C}_{i-1}}\mu_{F^{i-1}}\cdots \mu_{\mathcal{C}_0}\mu_{F^0}(\hat{Q})$ remain red, and all vertices in $\mathcal{C}_i$ and the corresponding fans also become red.  

Therefore, it remains to show that the procedure will terminate yielding an acyclic quiver $Q^m$.  But this follows because $Q$ is a finite quiver, and at every step we remove one configuration of 3-cycles together with the corresponding fans.  This shows that the procedure yields a quiver with all red vertices.
\end{proof}

\begin{remark}\label{6.20}
 The procedure outlined above constructs a maximal green sequence for $Q$ of length $n+t$, where $n$ is the number of vertices in $Q$ and $t$ is the number of 3-cycles in $Q$.  It dictates to mutate $n'+t'$ times for every 3-cycle configuration $\mathcal{C}$, where $n'$ is the number of vertices in $\mathcal{C}$ and $t'$ is the number of 3-cycles in $\mathcal{C}$.  See Remark \ref{5.20}.  On the other hand it requires to mutate at every vertex outside of a 3-cycle configuration exactly one, hence obtaining the desired $n+t$ steps.  
\end{remark}

\begin{ex}

We will demonstrate the procedure outlined in Theorem \ref{6.4} for the following quiver $Q.$
\begin{center} 
\begin{tikzpicture}[scale=0.5]
[ 
> = stealth, 
            shorten > = 1pt, 
            auto,
            node distance = 4.5cm, 
            semithick 
        ]

        \tikzstyle{every state}=[
            draw = black,
            thick,
            fill = white,
            minimum size = 6mm
        ]
				
				\node[draw=none, fill=none] (1) {$4$};
				\node[draw=none, fill=none] (2) [above right of =1] {$5$};
			   \node[draw=none, fill=none] (3) [right =1] {$6$};
				\node[draw=none, fill=none] (4) [right =3, xshift=-.8cm] {$9$};
        \node[draw=none, fill=none] (5) [right =4, xshift=-.60cm] {$10$};
        \node[draw=none, fill=none] (6) [right =5, xshift=-.4cm] {$11$};
        \node[draw=none, fill=none] (7) [right =6, xshift=-.2cm] {$12$};
        \node[draw=none, fill=none] (8) [right =7] {$13$};
				\node[draw=none, fill=none] (9) [above right of =8] {$14$};
				\node[draw=none, fill=none] (10) [right =8, xshift=.2cm] {$15$};
				\node[draw=none, fill=none] (11) [left of =1] {$8$};
				\node[draw=none, fill=none] (12) [left of =11] {$7$};
				\node[draw=none, fill=none] (13) [left of =2] {$3$};
				\node[draw=none, fill=none] (14) [left of =13] {$2$};
				\node[draw=none, fill=none] (15) [left of =14] {$1$};
				\node[draw=none, fill=none] (16) [left of =9] {$22$};
				\node[draw=none, fill=none] (17) [left of =16] {$23$};
				\node[draw=none, fill=none] (18) [above right of =10] {$16$};
				\node[draw=none, fill=none] (19) [right of =10] {$17$};
				\node[draw=none, fill=none] (20) [above right of =18] {$24$};
				\node[draw=none, fill=none] (21) [right of =18] {$25$};
				\node[draw=none, fill=none] (22) [below of =19] {$18$};
				\node[draw=none, fill=none] (23) [ right of =22] {$19$};
				\node[draw=none, fill=none] (24) [left of =22] {$20$};
				\node[draw=none, fill=none] (25) [left of =24] {$21$};
				
				\path[->] (1) edge node { } (2);	
				\path[->] (2) edge node { } (3);	
				\path[->] (3) edge node { } (1);	
				\path[->] (13) edge node { } (2);	
				\path[->] (14) edge node { } (13);	
				\path[->] (15) edge node { } (14);	
				\path[->] (11) edge node { } (1);	
				\path[->] (11) edge node { } (12);	
				\path[->] (3) edge node { } (4);	
				\path[->] (4) edge node { } (5);	
				\path[->] (5) edge node { } (6);	
				\path[->] (6) edge node { } (7);	
				\path[->] (7) edge node { } (8);	
				\path[->] (8) edge node { } (9);	
				\path[->] (9) edge node { } (10);	
				\path[->] (10) edge node { } (8);	
				\path[->] (9) edge node { } (16);	
				\path[->] (16) edge node { } (17);	
				\path[->] (10) edge node { } (18);	
				\path[->] (18) edge node { } (19);	
				\path[->] (19) edge node { } (10);	
				\path[->] (18) edge node { } (21);	
				\path[->] (21) edge node { } (20);	
				\path[->] (20) edge node { } (18);	
				\path[->] (19) edge node { } (22);	
				\path[->] (22) edge node { } (23);	
				\path[->] (23) edge node { } (19);	
				\path[->] (25) edge node { } (24);	
				\path[->] (22) edge node { } (24);

\end{tikzpicture}
\end{center}

Below is a maximal green sequence for $Q$ obtained by applying Theorem \ref{6.4}.   Note that its length is $30,$ which is equal to $n+t,$ as desired. 

 $$\mu_{23} \mu_{22} \mu_{20} \mu_{21} \mu_{7}\;\;\mu_{19} \mu_{18} \mu_{24} \mu_{25} \mu_{17} \mu_{16} \mu_{15} \mu_{14} \mu_{13} \mu_{15} \mu_{17} \mu_{24} \mu_{19} \;\; \mu_{12} \mu_{11} \mu_{10} \mu_{9}\;\;\mu_4 \mu_6 \mu_5 \mu_4 \;\; \mu_3 \mu_2 \mu_1 \mu_8.$$ 

This sequence is constructed in the following way.  For $Q$, we only have one choice of $\mathcal{C}_0,$ the 3-cycle configuration composed of the vertices 4, 5, and 6. We see that $3$ and $8$ are non-isolating vertices corresponding to $\mathcal{C}_0.$ Vertex $8$ is a source, so we mutate at $8$ immediately. However, $3$ is part of a fan, so we perform the fan procedure, starting at 1, then mutating at 2, and finishing at $3.$  In this way, the 3-cycle $\mathcal{C}_0$ becomes isolated, and so we perform the 3-cycle procedure. The full resulting mutation sequence is $\mu_4 \mu_6 \mu_5 \mu_4 \mu_3 \mu_2 \mu_1 \mu_8.$ 

Next, we consider $Q^1$ consisting of vertices $Q_0\setminus \{1,2,3,4,5,6,8\}$.  There is only one 3-cycle configuration in $Q^1$, composed of the vertices $13,$ $14,$ $15,$ $16,$ $17,$ $18,$ $19,$ $24,$ and $25.$ This is $\mathcal{C}_1$.  Its only non-isolating vertex is $12,$ which is the endpoint of a fan starting at 9.  Notice that this fan is not a connecting fan, since $\mathcal{C}_0$ is not present in $Q^1.$ We first perform the fan procedure on this fan, concluding with the mutation at $12.$ We then perform the 3-cycle procedure on $\mathcal{C}_1.$ The full resulting mutation sequence will be $\mu_{19} \mu_{18} \mu_{24} \mu_{25} \mu_{17} \mu_{16} \mu_{15} \mu_{14} \mu_{13} \mu_{15} \mu_{17} \mu_{24} \mu_{19} \mu_{12} \mu_{11} \mu_{10} \mu_{9}$. 

The quiver $Q^2$ will be composed of the vertices $7,$ $20,$ $21,$ $22,$ and $23.$ Note that it is acyclic, and so $Q^2=Q^m.$ Performing the zigzag-fan procedure on $Q^m$ results in the mutation sequence $\mu_{23} \mu_{22} \mu_{20} \mu_{21} \mu_{7}$.
\end{ex}

\section{Minimality}

As mentioned earlier the minimal length of an MGS for any acyclic quiver equals the number of vertices in the quiver.  However, this is not the case for quivers containing oriented cycles.  

In this section, we use a geometric approach to find the length of the shortest MGS's for any type $\mathbb{A}$ quivers.  

\begin{lem} \label{03}
Suppose $T$ is a triangulation of a polygon $S$.  Consider any closed polygon $\mathfrak{p}$ composed of arcs in $T$ and inscribed in $S$.  Let $\mu_I$ be a maximal green sequence for $Q_T$.  Then there exists an arc $i\in \partial \mathfrak{p}$ that appears at least twice in $\mu_I$.  
\end{lem}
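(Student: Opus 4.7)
The plan is to argue by contradiction using Proposition \ref{3.3}.  Suppose every arc $a \in \partial \mathfrak{p}$ is mutated at most once in $\mu_I$.  By Remark \ref{2.10}, a green vertex can only become red by being mutated at itself, so every arc of $\partial \mathfrak{p}$ must be mutated at least once, and hence under the assumption exactly once.

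I would first examine the first arc $a$ of $\partial\mathfrak{p}$ to be mutated, say at step $s$.  Before step $s$ no arc of $\partial\mathfrak{p}$ has been flipped, so $\mathfrak{p}$ is still inscribed in $T_{s-1}$.  The quadrilateral surrounding $a$ in $T_{s-1}$ therefore has one triangle sitting inside $\mathfrak{p}$ with apex some $x \in V(\mathfrak{p})$ (since the interior of $\mathfrak{p}$ contains no marked points outside $V(\mathfrak{p})$) and one outside $\mathfrak{p}$ with apex some $y \in M \setminus V(\mathfrak{p})$.  Flipping $a$ replaces it by the arc $xy$, which has exactly one endpoint in $V(\mathfrak{p})$, and by hypothesis this arc occupies the slot of $a$ until the end of $\mu_I$, so by Proposition \ref{3.3} it belongs to $\tau T$.

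I would then iterate this analysis along the mutation order of the $k$ sides of $\partial\mathfrak{p}$, arguing that the final arc at each $\partial\mathfrak{p}$-slot is of a restricted type (with a controlled relationship to $V(\mathfrak{p})$).  Combining this with Proposition \ref{3.3}---which forces $\tau T$ also to contain the boundary $\partial(\tau\mathfrak{p})$, whose sides $\tau a_1,\dots,\tau a_k$ have both endpoints in $V(\tau\mathfrak{p})$---I would derive a contradiction by counting, at each marked point $v$, the arcs of $\tau T$ incident to $v$ in two different ways, using the identity
\[
\#\{\text{arcs of }\tau T\text{ incident to }v\} = \#\{\text{arcs of }T\text{ incident to }\tau^{-1}v\}.
\]
Balancing these counts across $V(\mathfrak{p}) \cap V(\tau\mathfrak{p})$, $V(\mathfrak{p})\setminus V(\tau\mathfrak{p})$, and $V(\tau\mathfrak{p})\setminus V(\mathfrak{p})$ should give the required inconsistency.

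The main obstacle is the iteration step and the final count.  For the second and subsequent sides of $\partial\mathfrak{p}$ the geometry becomes subtler: after earlier flips the interior of $\mathfrak{p}$ is no longer a well-defined region of the triangulation, so the apex of the ``inside triangle'' of a later flip must be tracked with care.  Moreover, the slot-to-arc correspondence at the end of $\mu_I$ is only well-defined up to the permutation of non-frozen labels induced by the isomorphism of Proposition \ref{2.8}(2), and this bookkeeping must be threaded through the counting argument.
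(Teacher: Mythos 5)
Your setup is sound and your analysis of the first flip matches the paper's: you consider the first arc $a$ of $\partial\mathfrak{p}$ to be flipped, observe that the triangle on the inner side of $a$ has its apex at a vertex $x$ of $\mathfrak{p}$ (no marked points lie strictly inside $\mathfrak{p}$), so the new arc joins $x$ to a point outside $V(\mathfrak{p})$, and you invoke Proposition \ref{3.3} to say that if this arc is never flipped again it must lie in $\tau T$. But from there your plan has a genuine gap: the iteration over the remaining sides of $\partial\mathfrak{p}$ and the incidence-counting argument at the marked points are not carried out, and you yourself flag the two places where they are problematic (after earlier flips the region bounded by $\mathfrak{p}$ is no longer a union of triangles of the current triangulation, so the ``inside apex'' of a later flip need not be a vertex of $\mathfrak{p}$; and the slot-to-arc correspondence at the end of the sequence is only defined up to the isomorphism of Proposition \ref{2.8}). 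As written, the claim that the count ``should give the required inconsistency'' is an assertion, not a proof, so the proposal does not close.

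What you are missing is that no iteration or counting is needed: the contradiction is already available at the first flip. The new arc $i'$ ends at a vertex $x$ of $\mathfrak{p}$ where two boundary arcs $j,k\in\partial\mathfrak{p}$ meet. If $i'$ were in its final position, then $i'\in\tau T$ by Proposition \ref{3.3}, hence $\tau^{-1}i'\in T$. But $\tau^{-1}i'$ crosses the boundary arc $j$ lying counterclockwise from $i'$ at $x$, and $j$ still belongs to $T$ because $a$ was the \emph{first} arc of $\partial\mathfrak{p}$ to be flipped. Two crossing arcs cannot both lie in the triangulation $T$, so $i'$ must be flipped again, i.e.\ the vertex corresponding to $a$ occurs at least twice in $\mu_I$. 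In short, the key step is to pull the candidate final arc back by $\tau^{-1}$ into the \emph{original} triangulation and exhibit a crossing there; your plan instead pushes everything forward into $\tau T$ and tries to detect the obstruction by a global count, which is where the argument stalls.
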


\begin{proof} 

Consider some polygon $\mathfrak{p}$ and its boundary $\partial \mathfrak{p}$ in the triangulation $T$.  Observe that each arc in $\mathfrak{p}$ is necessarily a part of some interior triangle.  To illustrate the proof consider the diagram below.  We know that no matter how the other arcs in $T\setminus \partial \mathfrak{p}$ are arranged and flipped, in order to obtain an MGS we will eventually have to flip an arc in $\partial \mathfrak{p}$. Without loss of generality, suppose $i$ is the first arc in $\partial \mathfrak{p}$ that will be flipped.  

\begin{center}
 \includegraphics[scale=0.4]{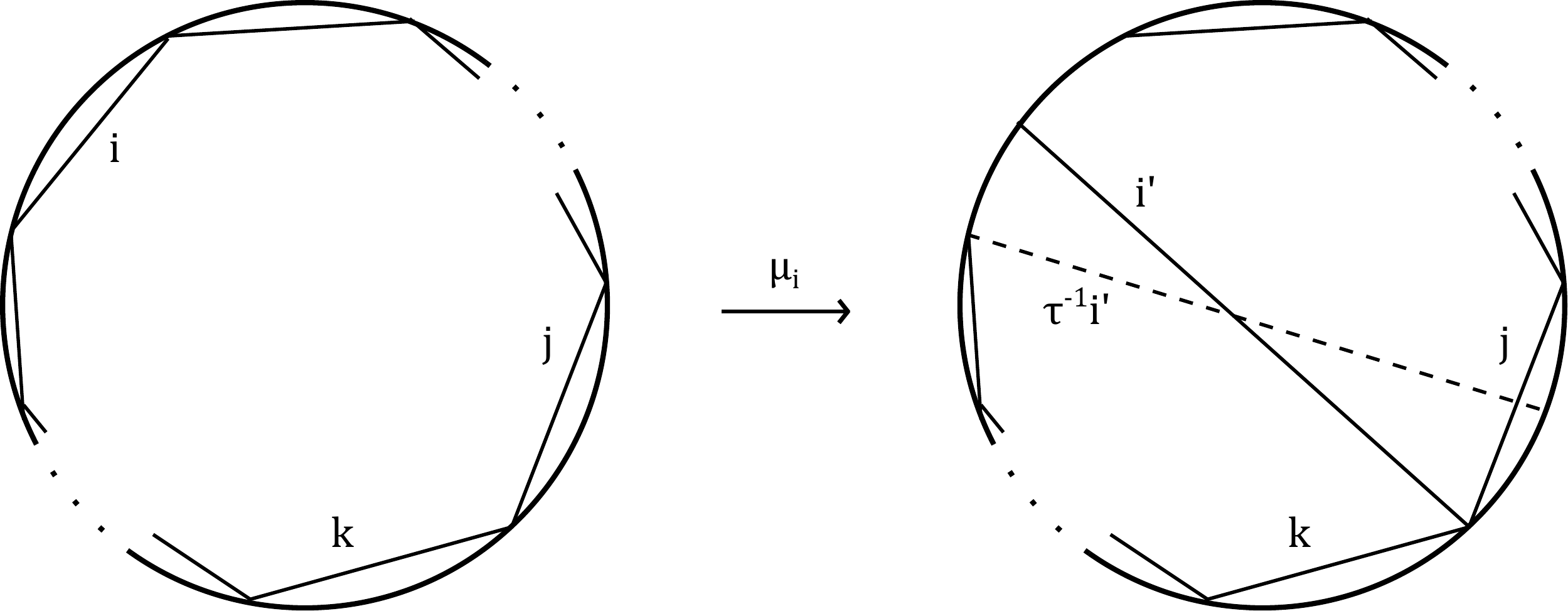}
\end{center}

Notice that if we flip $i$, the resulting arc $i'$ must have one endpoint in a vertex of $\mathfrak{p}$ where two other arcs in $\partial \mathfrak{p}$, say $j$ and $k$, meet.  Let $j\in \partial \mathfrak{p}$ be the arc lying counterclockwise from $i'$ with respect to the endpoint that $i',k$ and $j$ share.  Observe that if $i'$ is in its final position, meaning it does not have to be flipped again, then $i' \in \tau T$, by Proposition \ref{3.3}.  Since $\tau^{-1}\tau$ is the identity on the set of arcs in $S$, it means that $\tau^{-1}i' \in T$.  However, $\tau^{-1} i'$ intersects $j$ as illustrated above.  Recall that $i$ was the first arc in $\partial \mathfrak{p}$ to be flipped, hence $j\in T$.  This shows that $T$ contains $\tau^{-1}i'$ and $j$, which cannot happen since $T$ is a maximal collection of non-intersecting arcs.  Therefore, the arc $i'$ would have to flipped again.  This completes the proof of the lemma.

\end{proof}

\begin{thm} \label{04}
Suppose $T$ is a triangulation of a polygon.  Let $n$ be the number of arcs in $T$ and $t$ denote the number of distinct interior triangles in $T$.  Then any maximal green sequence for $Q_T$ is of length at least $n+t$.  
\end{thm}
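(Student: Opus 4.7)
The plan is to combine Remark \ref{2.10} with Lemma \ref{03} to obtain the required $n+t$ lower bound. By Remark \ref{2.10}, every vertex of $Q_T$ is mutated at least once, so any MGS has length at least $n$. Writing $f_i \geq 1$ for the number of times the vertex corresponding to arc $i$ is mutated in a fixed MGS, the total length equals $n + \sum_{i}(f_i - 1)$, and it remains to show that the ``excess flips'' satisfy $\sum_i (f_i-1) \geq t$. Each interior triangle $\Delta$ is itself a closed inscribed polygon in $S$, so Lemma \ref{03} applied to $\Delta$ forces at least one arc of $\Delta$ to have $f_i \geq 2$; the obstacle is that two adjacent interior triangles share an arc, and the twice-flipped arc witnessing Lemma \ref{03} for both of them could coincide.

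To resolve this, I will set up a Hall-type matching argument. Form a bipartite graph with the $t$ interior triangles on one side and pairs $(i,k)$ with $1 \leq k \leq f_i - 1$ on the other, where $\Delta$ is adjacent to $(i,k)$ whenever $i \in \mathrm{arcs}(\Delta)$. A matching saturating the triangles would yield $\sum_i (f_i - 1) \geq t$ immediately. By Hall's theorem such a matching exists provided
$$\sum_{i \in \mathrm{arcs}(S)}(f_i - 1) \geq |S| \quad \text{for every subset } S \text{ of interior triangles}.$$
Splitting $S$ into its connected components in the adjacency relation given by shared arcs reduces to the case where the triangles of $S$ form a single inscribed sub-polygon $\mathfrak{p}_S$ of $T$; here Lemma \ref{5.0} guarantees that 3-cycle configurations are trees so that such a $\mathfrak{p}_S$ is well-defined. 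The Hall condition is then proved by induction on $|S|$: the base case is Lemma \ref{03} applied to a single triangle, and the inductive step peels off a leaf $\Delta$ of $\mathfrak{p}_S$ with boundary arcs $a,b$ on $\partial \mathfrak{p}_S$ and one interior arc $d$. If $f_a \geq 2$ or $f_b \geq 2$ we combine directly with the inductive hypothesis on $\mathfrak{p}_S \setminus \Delta$. Otherwise $f_a = f_b = 1$, so Lemma \ref{03} on $\Delta$ forces $f_d \geq 2$, and Lemma \ref{03} applied to $\mathfrak{p}_S$ itself, whose boundary excludes $d$, forces a further distinct boundary arc $c$ of $\mathfrak{p}_S$ to satisfy $f_c \geq 2$; a case analysis, applying Lemma \ref{03} to nested sub-polygons as needed, then supplies enough genuinely distinct twice-flipped arcs to close the induction.

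The principal obstacle is the bookkeeping in this inductive step: a single twice-flipped arc can witness Lemma \ref{03} for many nested inscribed sub-polygons simultaneously, so one must exploit the tree structure from Lemma \ref{5.0} together with the fact that interior arcs of $\mathfrak{p}_S$ lie in exactly two triangles of $\mathfrak{p}_S$ in order to ensure that each application of Lemma \ref{03} contributes a new excess-flip slot rather than being absorbed into the inductive hypothesis. Once the Hall condition is verified in this way, the matching exists and yields $\sum_i (f_i - 1) \geq t$, completing the proof.
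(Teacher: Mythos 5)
Your overall strategy---charge each interior triangle to a distinct ``excess flip'' obtained from Lemma \ref{03}---is the same idea as the paper's, but the execution has a genuine gap exactly where you flag ``the principal obstacle,'' and the sketch you give does not close it. In Case 2 of your inductive step ($f_a=f_b=1$ for the two outer arcs of the peeled leaf $\Delta$), everything you can extract from Lemma \ref{03} lands inside $\mathrm{arcs}(S\setminus\Delta)$: the arc $d$ is interior, and the witness $c$ for $\mathfrak{p}_S$ lies in $\partial\mathfrak{p}_S\setminus\{a,b\}\subseteq\mathrm{arcs}(S\setminus\Delta)$. The inductive hypothesis already asserts $\sum_{i\in\mathrm{arcs}(S\setminus\Delta)}(f_i-1)\geq|S|-1$, and knowing that two particular arcs in that set satisfy $f\geq 2$ gives no improvement once $|S|\geq 3$ (two specific excesses only certify a sum of $2$, while you need $|S|$). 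So the induction does not close, and ``applying Lemma \ref{03} to nested sub-polygons as needed'' cannot rescue it, because a single twice-flipped arc is a valid witness for every inscribed polygon whose boundary contains it; nothing in your setup forces the nested applications to produce new arcs.

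The fix is to reverse the order of operations, which is the paper's key move: do not peel an arbitrary leaf and then look for witnesses. Instead, first apply Lemma \ref{03} to $\mathfrak{p}_S$ to obtain a twice-flipped arc $\gamma\in\partial\mathfrak{p}_S$, then delete the unique triangle of $S$ containing $\gamma$. The resulting one or two smaller inscribed polygons have arc sets that are disjoint from each other and do not contain $\gamma$, so the witnesses produced by induction on each piece are automatically distinct from $\gamma$ and from one another. This yields $|S|$ distinct arcs with $f_i\geq 2$, which gives $\sum_i(f_i-1)\geq t$ directly; note this also makes the Hall/matching superstructure unnecessary, since you never need one arc to pay for two triangles. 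Your reduction to connected components and your use of Remark \ref{2.10} for the base contribution of $n$ are fine as stated.
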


\begin{proof}
Observe that if $T$ does not contain any interior triangles then the statement holds.  Otherwise, consider a closed polygon $\mathfrak{p}_1$ formed by all of the interior arcs in a triangulation $T$ such that $\mathfrak{p}_1$ is maximal.  This means that there is no other closed polygon $\mathfrak{q}$ composed of interior arcs such that $\mathfrak{p}_1$ is contained inside $\mathfrak{q}$.  In particular arcs in $\mathfrak{p}$ correspond to a triangle configuration in $T$.  By Lemma \ref{03} there exists an arc in $\partial \mathfrak{p}_1$ that has to be flipped at least twice. Call this arc $\gamma_1$.

Next, we consider the polygon(s) formed by eliminating the triangle $\Delta$ contained inside $\mathfrak{p}_1$ such that $\gamma_1 \in \Delta$.  Observe that this triangle $\Delta$ exists and is unique.  By the phrase eliminating this triangle we understand removing  
$$\Delta \cap \partial\mathfrak{p}_1$$
from $\mathfrak{p}_1$, which necessarily contains $\gamma_1$, and possibly another arc of $\Delta$.  In this way, if we remove two arcs of $\Delta$ then we obtain a single polygon $\mathfrak{p}_2$, and if we remove one arc of $\Delta$ then we obtain two polygons $\mathfrak{p}_2^1$ and $\mathfrak{p}_2^2$ connected at a single vertex. 

Either way, by Lemma \ref{03} it must be true that $\mathfrak{p}_2$ or each of $\mathfrak{p}_2^1$ and $\mathfrak{p}_2^2$ must (each) contain an arc that is flipped twice in a particular MGS. We also know that since we eliminated $\gamma_1$ this cannot be the same arc. 

Next, we eliminate the arcs that must be flipped twice in the second set of polygons $\mathfrak{p}_2$ or $\mathfrak{p}_2^1$ and $\mathfrak{p}_2^2$ in the same manner as above.  Hence, we form a third set of polygons. We can continue this process until we reach the last set of polygons, which will only consist of single triangles. Again by Lemma \ref{03} we know that since we have eliminated all arcs that must be flipped twice outside of these triangles, there must be an arc in each of these remaining triangles that must also be flipped twice. 

In this way we remove a single triangle each time we perform this process on any given polygon.  Since we started with $\mathfrak{p}_1$ being maximal we should have one arc that must be flipped twice for every triangle in $\mathfrak{p}_1$.  We can repeat the same procedure for every such maximal polygon in $T$. This implies that any MGS for $Q_T$ must be of length at least $n+t$.  
\end{proof}

\begin{cor}
Suppose $T$ is a triangulation of a polygon.  Let $n$ be the number of arcs in $T$ and $t$ denote the number of distinct interior triangles in $T$.  Then any maximal green sequence of minimal length for $Q_T$ is of length exactly $n+t$.  
\end{cor}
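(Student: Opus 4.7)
The plan is to obtain the corollary as an immediate consequence of two results already established in the paper, namely Theorem~\ref{04} (giving a lower bound) and Theorem~\ref{6.4} together with Remark~\ref{6.20} (giving a matching upper bound via an explicit construction). So the proof is really a two-line synthesis rather than a new argument.

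First I would invoke Theorem~\ref{04}: since $T$ is a triangulation of a polygon with $n$ arcs and $t$ interior triangles, every maximal green sequence for $Q_T$ has length at least $n+t$. This gives the lower bound. For the matching upper bound, I would recall that $Q_T$ is a type $\mathbb{A}$ quiver (by the discussion at the start of Section~3), so Theorem~\ref{6.4} supplies an explicit maximal green sequence $\mu_m\,\mu_{\mathcal{C}_{m-1}}\mu_{F^{m-1}}\cdots \mu_{\mathcal{C}_0}\mu_{F^0}$ for $Q_T$. By Remark~\ref{6.20}, the length of this sequence is exactly $n+t$ (each non-3-cycle vertex is mutated once, and each 3-cycle configuration contributes $n'+t'$ mutations as computed in Remark~\ref{5.20}).

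Putting these two bounds together, any minimal length MGS for $Q_T$ has length at least $n+t$, and since a sequence of length exactly $n+t$ exists, the minimum is achieved and equals $n+t$. I would conclude by remarking that this also confirms Theorem~1.1 stated in the introduction, since the number of 3-cycles in $Q_T$ equals the number of interior triangles in $T$ by the construction of $Q_T$ from $T$.

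There is essentially no obstacle here: all the work was done in Theorem~\ref{04} (the geometric lower bound via Lemma~\ref{03} and the $\tau$-translation argument of Proposition~\ref{3.3}) and in Theorem~\ref{6.4} together with Remark~\ref{6.20} (the explicit construction). The only thing to be careful about is making sure the count in Remark~\ref{6.20} is correctly invoked: the procedure deletes 3-cycle configurations one at a time together with their attached fans, and each deleted piece contributes its own $n'+t'$ count, with the remaining acyclic piece $Q^m$ contributing exactly $|Q^m_0|$ mutations via the zigzag-fan procedure, so the totals sum to $n+t$.
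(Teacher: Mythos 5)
Your proposal is correct and follows essentially the same route as the paper: the lower bound comes from Theorem~\ref{04} and the matching upper bound from the explicit construction of Theorem~\ref{6.4} with the length count in Remark~\ref{6.20} (which rests on Remark~\ref{5.20}). The paper's own two-line proof cites exactly these ingredients, so nothing further is needed.
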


\begin{proof}
Theorem \ref{5.9} describes a procedure of constructing a maximal green sequence of length $n+t$, see Remark \ref{6.20}.  Applying Theorem \ref{04} yields the desired result. 
\end{proof} 

Observe that the length of a shortest MGS is independent of the way the 3-cycles are arranged in relation to each other.  It is interesting to note that this is not the case for the maximal length MGS's.  Consider the following example.  

\begin{ex}
It is known that the maximal length MGS for a quiver of type $\mathbb{A}$ without any 3-cycles is equal to $\frac{n(n+1)}{2}$, where $n$ is the number of nodes in the quiver, see \cite{BDP}. This does not hold for quivers composed of 3-cycles, however it does give us an upper bound.  The maximal length of an  MGS depends on the orientation of arrows between the shared vertices. Consider the quiver 

\begin{center} \begin{tikzpicture}[ 
> = stealth, 
            shorten > = 1pt, 
            auto,
            node distance = 1.5cm, 
            semithick 
        ]

        \tikzstyle{every state}=[
            draw = black,
            thick,
            fill = white,
            minimum size = 3mm
        ]
				\node[draw=none, fill=none] (1) {$1$};
        \node[draw=none, fill=none] (2) [above right of=1] {$2$};
        \node[draw=none, fill=none] (3) [right of=1] {$3$};
        \node[draw=none, fill=none] (4) [above right of=3] {$4$};
        \node[draw=none, fill=none] (5) [right of=3] {$5$};
        \node[draw=none, fill=none] (6) [above right of=5] {$6$};
        \node[draw=none, fill=none] (7) [right of=5] {$7$};
        \node[draw=none, fill=none] (8) [above right of=7] {$8$};
        \node[draw=none, fill=none] (9) [right of=7] {$9$};

        \path[->] (1) edge node { } (2);
				\path[->] (2) edge node { } (3);
				\path[->] (3) edge node { } (1);
				\path[->] (3) edge node { } (4);
				\path[->] (4) edge node { } (5);
				\path[->] (5) [ultra thick] edge node { } (3);
				\path[->] (5) edge node { } (6);
				\path[->] (6) edge node { } (7);
				\path[->] (7) [ultra thick] edge node { } (5);
				\path[->] (7) edge node { } (8);
				\path[->] (8) edge node { } (9);
				\path[->] (9) edge node { } (7);

\end{tikzpicture}
\end{center}

\noindent with $9$ vertices.  Hence, the above formula yields an upper bound of 45.  Using analytical methods we computed all possible maximal green sequences for this quiver.  We determined that the longest MGS for this quiver has length $35$. Notice that the thickened arrows between shared vertices are oriented in the same direction.  The quiver 
\begin{center} \begin{tikzpicture}[ 
> = stealth, 
            shorten > = 1pt, 
            auto,
            node distance = 1.5cm, 
            semithick 
        ]

        \tikzstyle{every state}=[
            draw = black,
            thick,
            fill = white,
            minimum size = 3mm
        ]
				\node[draw=none, fill=none] (1) {$1$};
        \node[draw=none, fill=none] (2) [above right of=1] {$2$};
        \node[draw=none, fill=none] (3) [right of=1] {$3$};
        \node[draw=none, fill=none] (4) [above right of=3] {$4$};
        \node[draw=none, fill=none] (5) [right of=3] {$5$};
        \node[draw=none, fill=none] (6) [above right of=5] {$6$};
        \node[draw=none, fill=none] (7) [right of=5] {$7$};
        \node[draw=none, fill=none] (8) [above right of=7] {$8$};
        \node[draw=none, fill=none] (9) [right of=7] {$9$};

        \path[->] (1) edge node { } (2);
				\path[->] (2) edge node { } (3);
				\path[->] (3) edge node { } (1);
				\path[->] (5) edge node { } (4);
				\path[->] (4) edge node { } (3);
				\path[->] (3) [ultra thick] edge node { } (5);
				\path[->] (5) edge node { } (6);
				\path[->] (6) edge node { } (7);
				\path[->] (7) [ultra thick] edge node { } (5);
				\path[->] (7) edge node { } (8);
				\path[->] (8) edge node { } (9);
				\path[->] (9) edge node { } (7);

\end{tikzpicture}
\end{center}

\noindent varies from the previous one in that the arrows between the three shared vertices alternate in orientation. The maximal length MGS for this quiver, also determined computationally, is $37$.

\end{ex}

\bibliographystyle{plain}

\end{document}